\newtheorem{lem}{Lemma}[section]
\newtheorem{df}[lem]{Definition}
\newtheorem{cor}[lem]{Corollary}
\newtheorem{thm}[lem]{Theorem}
\newtheorem{prop}[lem]{Proposition}
\newtheorem{conj}[lem]{Conjecture}
\theoremstyle{remark}
\newtheorem{rem}[lem]{Remark}
\newcommand{\SL}{\mathrm{SL}}
\newcommand{\Sym}{\mathrm{Sym}}
\newcommand{\C}{\CC}
\renewcommand{\AA}{{\mathbb A}}
\def\rank{\mathrm{rank}}
\def\Ann{\mathrm{Ann}}
\def\wAnn{\widetilde{\Ann}}
\def\Ug{U(\frak{g})}
\def\wU{\widehat{U}}
\def\wUg{\widehat{\Ug}}
\def\wUk{\widehat{U}_\bk}
\def\wUgK{\widehat{U(\frak{g}_K)}}
\def\wtM{\widetilde{M}}
\def\whM{\widehat{M}}
\def\wH{\widetilde{H}}
\def\Ind{\mathrm{Ind}}
\def\gr{\mathrm{gr}}
\def\Hom{\mathrm{Hom}}
\def\for{\mathrm{for}}
\def\End{\mathrm{End}}
\newcommand{\la}{\mathrm{la}}
\newcommand{\an}{\mathrm{an}}
\newcommand{\Zg}{Z(\frak{g})}
\newcommand{\Zgo}{Z(\frak{g}_0)}
\newcommand{\BB}{{\mathbb B}}
\newcommand{\CC}{{\mathbb C}}
\newcommand{\FF}{{\mathbb F}}
\newcommand{\GG}{{\mathbb G}}
\newcommand{\NN}{{\mathbb N}}
\newcommand{\QQ}{{\mathbb Q}}
\newcommand{\RR}{{\mathbb R}}
\newcommand{\TT}{{\mathbb T}}
\newcommand{\ZZ}{{\mathbb Z}}
\newcommand{\bk}{\ensuremath{\mathbf{k}}}
\newcommand{\cL}{{\mathcal L}}
\newcommand{\cO}{{\mathcal O}}
\tikzset{
  column sep/.code=\def\pgfmatrixcolumnsep{\pgf@matrix@xscale*(#1)},
  row sep/.code   =\def\pgfmatrixrowsep{\pgf@matrix@yscale*(#1)},
  matrix xscale/.code=%
    \pgfmathsetmacro\pgf@matrix@xscale{\pgf@matrix@xscale*(#1)},
  matrix yscale/.code=%
    \pgfmathsetmacro\pgf@matrix@yscale{\pgf@matrix@yscale*(#1)},
  matrix scale/.style={/tikz/matrix xscale={#1},/tikz/matrix yscale={#1}}}
\def\pgf@matrix@xscale{1}
\def\pgf@matrix@yscale{1}
\title{Sharp bounds for multiplicities of Bianchi modular forms}
\begin{document}
\author{Weibo Fu}
\address{Department of Mathematics, Princeton University, Princeton, NJ, USA.}
\email{wfu@math.princeton.edu}

\begin{abstract}
We prove a degree-one saving bound for the dimension of the space of cohomological automorphic forms of fixed level and growing weight on $\SL_2$ over any number field that is not totally real.
In particular, we establish a sharp bound on the growth of cuspidal Bianchi modular forms.
We transfer our problem into a question over the completed universal enveloping algebras by applying an algebraic microlocalisation of Ardakov and Wadsley to the completed homology.
We prove finitely generated Iwasawa modules under the microlocalisation are generic, solving the representation theoretic question by estimating growth of Poincar\'e–Birkhoff–Witt filtrations on such modules.
\end{abstract}
\maketitle
\tableofcontents

\section{Introduction}\label{intro}
Let $F$ be a number field of degree $r = r_1 + 2r_2$, with $r_1$ real places and $r_2$ complex places. 
Let $F_{\infty}=F\otimes_{\QQ}\RR$, so that $\SL_2(F_{\infty})=\SL_2(\RR)^{r_1}\times\SL_2(\CC)^{r_2}$.
Let $Z_{\infty}$ be the centre of $\SL_2(F_{\infty})$, $K_f$ be a compact open subgroup of $\SL_2(\mathbb{A}^{\infty}_F)$ and let 
\[X(K_f) :=\SL_2(F)\backslash \SL_2(\mathbb{A}_F)/K_fZ_{\infty}.\] 
If ${\bk}=(k_1,...,k_{r_1+r_2})$ is an $(r_1+r_2)$-tuple of positive even integers, we define $W_{\bk}$ to be the representation of $\SL_2(F_{\infty})$ obtained by taking the tensor product of the representation $\Sym^{k_i-2}$ of $\SL_2(F_{v_i})$ when $v_i$ is a real place and the representation $\Sym^{k_i/2-1} \otimes \overline{\Sym}^{k_i/2-1}$ of $\SL_2(F_{v_i})$ when $v_i$ is a complex place.
We also use $W_{\bk}$ to denote the local system on $X({K_f})$ coming from the representation $W_{\bk}$.
We let $S_{\bk}(K_f)$ be the space of cohomological cusp forms on $X(K_f)$ with weight ${\bk}$. 
We define $\Delta(\bk)$ to be
\[\Delta({\bk})=\prod_{1\leq i\leq r_1}k_i\times \prod_{r_1<i\leq r_1+r_2} k_i^2.\]

In this paper, we will adapt $p$-adic algebraic methods to study the growth of dimension of $S_{\bk}(K_f)$ as $\bk$ varies and $K_f$ is fixed.

When $F$ is totally real, Shimizu \cite{Shi63} has proven that \[\dim_{\C} S_{\bk}(K_f)\sim C\cdot \Delta({\bk})\]
for some constant $C$ independent of ${\bk}$.

When $F$ is not totally real, the growth rate of $\dim_{\C} S_{\bk}(K_f)$ is wildly open.
The first nontrivial bound is given by a trace formula method:
\begin{eqnarray}\label{TF bound}
\dim_{\C} S_{\bk}(K_f) = o(\Delta(\bk)). \end{eqnarray}
\begin{conj}\label{imag quad conj}
If $F$ is imaginary quadratic, $\bk = (k)$, there exists a constant $c$ depending only on $K_f$ such that for $k \geq 1$,  \[ \dim_{\C} S_{\bk}(K_f) \leq c \cdot k. \]
\end{conj}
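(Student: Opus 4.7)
The plan is to translate Conjecture~\ref{imag quad conj} into a representation-theoretic statement about completed homology and then attack it through the Ardakov--Wadsley algebraic microlocalisation of $\wUg$, exactly as advertised in the abstract. First, fix an auxiliary prime $p$ and consider Emerton's completed homology $\wH_*(K^p):=\varprojlim_{K_p}H_*(X(K_pK^p),\Z_p)$ of the tower of Bianchi $3$-manifolds. Because $F$ is imaginary quadratic, $X(K_f)$ has real dimension three, so the cuspidal content of $S_{\bk}(K_f)$ is carried by $H^1$ and hence by a weight-$\bk$ component of $\wH_1$. Via the locally analytic/locally algebraic vector dictionary, $\dim_{\C} S_{\bk}(K_f)$ is controlled, up to a constant depending only on $K_f$, by the multiplicity of a generalised Verma module of infinitesimal character determined by $\bk$ inside the locally analytic vectors $\wH_1^{\la}$, viewed as a module over $\wUg$ for $\frak{g}=\mathfrak{sl}_2(F\otimes_{\Q}\Q_p)$. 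Finite generation of $\wH_1$ as an Iwasawa module over $\Z_p\llbracket G\rrbracket$ (for $G$ a compact open in $\SL_2(F_p)$) follows from standard descent, and after inverting $p$ and passing to locally analytic vectors one obtains a finitely generated module over $\wUg$.

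Second, I would apply Ardakov--Wadsley microlocalisation to $\wUg$, producing an auxiliary Noetherian Banach algebra whose associated graded is a coordinate ring of affine opens in the cotangent bundle of a flag variety; this transplants Beilinson--Bernstein/symplectic intuition to the non-commutative completed setting. The completed homology becomes a coherent module over this microlocalised algebra, and the problem reduces to controlling its characteristic variety. Granting the key \emph{genericity} claim---that this characteristic variety is a proper closed subvariety, i.e.\ of codimension at least one in the ambient $T^*\PP^1\times T^*\PP^1$-like symplectic space---a standard Hilbert--Samuel/PBW-filtration estimate on $\wUg$ yields that the piece cut out by the central character of $\bk$ has dimension at most linear in $k$, giving $\dim_{\C} S_{\bk}(K_f)\leq c\cdot k$.

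The central obstacle is the genericity step. An arbitrary Iwasawa module may have maximal-dimensional characteristic variety, so the bound must exploit global input. I would try to deduce it from a vanishing result for $\wH_2$ modulo torsion (using that the $\ell^2$-Betti numbers of hyperbolic $3$-manifolds vanish outside the middle degree) together with Poincar\'e duality across the tower, which pins down $\wH_1$ enough to rule out a maximal characteristic variety. This is precisely where the non-totally-real hypothesis is indispensable: in the totally real case the associated locally symmetric space is even-dimensional, cuspidal completed homology saturates its microlocal support, and Shimizu's sharp asymptotic $\dim_{\C} S_{\bk}(K_f)\sim C\Delta(\bk)$ emerges as the consequence rather than the degree-one saving sought here.
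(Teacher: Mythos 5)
Your overall skeleton matches the paper's: completed homology, Emerton's spectral sequence, Ardakov--Wadsley microlocalisation, and a PBW-filtration estimate on the microlocalised module. But there is a genuine conceptual gap at the ``genericity'' step, and it is the step where most of the paper's work lives.

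You conflate two distinct conditions. \emph{Condition (1): torsionness.} For non--totally-real $F$, completed homology $\wH_j(K^p)$ is a finitely generated \emph{torsion} $\QQ_p[[G]]$-module, equivalently its characteristic variety has positive codimension. This does follow from global input of the type you invoke ($\ell^2$-Betti number vanishing, Poincar\'e duality in the tower), and indeed this is where the non--totally-real hypothesis enters the paper. \emph{Condition (2): genericity in the paper's sense.} Every nonzero $\delta \in \QQ_p[[G]]$ remains nonzero after specializing the Casimirs of $\wUg$ to \emph{any} $\lambda \in \ZZ_p^r$; i.e., the composite $\QQ_p[[G]] \hookrightarrow \wUg_\lambda$ is injective for every $\lambda$ (Theorem \ref{main spec}/Theorem \ref{mic inf sp}). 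This is a \emph{purely local, purely algebraic} statement about the Iwasawa algebra and the microlocalisation; no global or topological input is relevant to it, and none would suffice to prove it.

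Condition (1) is necessary but provably not sufficient for the linear bound. The paper's own remark after Theorem \ref{upper bound} gives the key counterexample: for $r = 2$, the cyclic torsion $\Ug$-module $\Ug/(\Delta_1 - \Delta_2)$ has parallel-weight multiplicities $H^0_{\whM}(\bk)$ growing \emph{quadratically} in $k$, even though its characteristic variety is proper. The point is that when you restrict to a single central block $\wUg_\lambda$ with $\lambda_1 = \lambda_2$, the relation $\Delta_1 - \Delta_2$ becomes vacuous and you recover the full enveloping algebra in that block. Your plan --- ``rule out a maximal characteristic variety'' via Poincar\'e duality and $\ell^2$-Betti vanishing --- establishes only condition (1), and so it would not distinguish the genuine completed homology module from a hypothetical module of the shape $\wUg/(\Delta_1 - \Delta_2)$. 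No amount of topological reasoning can, because the distinction is invisible at the level of characteristic varieties over $\wUgK$.

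What rescues the paper is the observation that elements like $\Delta_1 - \Delta_2$ simply do not lie in (the image of) the Iwasawa algebra: the center of $\QQ_p[[G]]$ is very small (Ardakov), while the Casimirs live in $\wUg$ but not $\QQ_p[[G]]$. Making this precise is the content of \S\ref{specialization}, which proceeds via the distribution algebra $D(G,\QQ_p)$, Frommer--Kohlhaase's description of $U_r(\frak g)$, and an integral-domain argument for the associated graded of $D_r^\lambda(G,\QQ_p)$ (Theorem \ref{Dr domain}); alternatively it is a consequence of the faithfulness of Verma modules for Iwasawa algebras from Ardakov--Wadsley [AW14]. Without this independent algebraic input your argument cannot close, so the proposal as written has a gap precisely where the paper has its main new lemma.
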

This conjecture is supported by experimental data of Finis-Grunewald-Tirao \cite{FGT10} and the work of Calegari-Mazur \cite{CM09} (for Hida families).
Under mild conditions, such an upper bound of linear growth rate is sharp from the base change of classical elliptic modular forms.

In this paper, we prove this conjecture by giving a polynomial saving improvement of (\ref{TF bound}).

If $F$ is imaginary quadratic, Finis, Grunewald and Tirao \cite{FGT10} established the bounds
\[k \ll \dim_{\C} S_{\bk}(K_f)\ll_{K_f} \frac{k^2}{\ln k},\ \ {\bk}=(k)\]
for suitable $K_f$ using base change and the trace formula respectively. 
In \cite{Mar12}, Marshall improved (\ref{TF bound}) by a power saving bound: suppose $\bk = (k,\cdots,k)$ is parallel, \[\dim_{\C} S_{\bk}(K_f)\ll_{\epsilon,K_f}k^{r-1/3+\epsilon}.\] 
And later on in \cite{Hu21}, Hu proved a better power saving bound \[\dim_{\C} S_{\bk}(K_f)\ll_{\epsilon,K_f}k^{r-1/2+\epsilon}.\]

Both of them use mod $p$ representation theory methods applied to Emerton's completed homology and relate the completed homology back to $S_{\bk}(K_f)$ via a spectral sequence and the Eichler-Shimura isomorphism summarized as \begin{eqnarray}\label{ES} \dim_{\CC} H_{c}^{r_{1}+r_{2}} (Y(K_f), W_{\bk})=2^{r_{1}} \dim_{\CC} S_{\bk}(K_{f}) \end{eqnarray} in \cite{Mar12}.
Here $Y(K_f)$ is defined to be \[ Y(K_f) := X(K_f) / K_{\infty}, \]
and $K_{\infty}$ is the maximal compact subgroup of $\SL_2(F_{\infty})$.

If $F$ is imaginary quadratic, $Y(K_f)$ is a hyperbolic 3-manifold. The space $S_{\bk}(K_f)$ consists of cuspidal Bianchi modular forms of parallel weight $(k,k)$.
This space corresponds to the first compactly supported cohomology with coefficient local system $W_{\bk} = \Sym^{k/2-1} \otimes \overline{\Sym}^{k/2-1}$ by the Eichler-Shimura isomorphism (\ref{ES}).

The main result of our paper is to bound $\dim_{\C} S_{\bk}(K_f)$: 

\begin{thm}\label{main uncond}
If $F$ is not totally real, then for any fixed $K_f$, we
have
\begin{eqnarray}\label{upp bound}
\dim_{\C} S_{\bk}(K_f) \leq_{K_f} (\min_{1 \leq i \leq r} k_i)^{-1} O(\Delta(\bk)).
\end{eqnarray}
If moreover $\bk = (k,\cdots,k)$ is parallel, we have
\[\dim_{\C} S_{\bk}(K_f) \leq_{K_f} O(k^{r-1}).\]
\end{thm}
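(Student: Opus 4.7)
The plan is to combine Emerton's completed homology with the Ardakov–Wadsley algebraic microlocalisation of the completed enveloping algebra in order to extract a degree-one saving over the trivial count. First I would apply the Eichler–Shimura isomorphism (\ref{ES}) to replace $\dim_{\C} S_{\bk}(K_f)$ by $\dim_{\C} H^{r_1+r_2}_c(Y(K_f), W_{\bk})$ up to the harmless factor $2^{r_1}$. Fixing a prime $p$ and a uniform pro-$p$ open subgroup $K_p \subset \SL_2(F\otimes_{\QQ}\Qp)$ inside $K_f$, Emerton's spectral sequence expresses this cohomology in terms of $\Ext$-groups over the Iwasawa algebra $\Lambda = \Zp[\![K_p]\!]$ between the completed homology $\widetilde{H}_{\ast}$ and the algebraic dual of $W_{\bk}$. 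As $\widetilde{H}_{\ast}$ is finitely generated over $\Lambda$, the theorem reduces to a uniform bound on those $\Ext$-groups, or, after inverting $p$ and passing to the Lie algebra $\mathfrak{g}:=\Lie K_p$, to a bound on the Chevalley–Eilenberg cohomology of $\widetilde{H}_0[1/p] \otimes W_{\bk}$.

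Second, I would push this computation from $\Lambda[1/p]$ into the completed enveloping algebra $\widehat{U(\mathfrak{g})}$ via the microlocalisation of Ardakov–Wadsley: the latter is a flat, Auslander-regular Noetherian ring, carrying a PBW filtration whose associated graded is a commutative Noetherian affine ring to which one can attach Hilbert–Samuel dimensions. The completed homology becomes a finitely generated $\widehat{U(\mathfrak{g})}$-module $M$, and the Lie algebra cohomology of $M \otimes W_{\bk}$ is encoded in $\Ext$-groups of good-filtered $\widehat{U(\mathfrak{g})}$-modules, which can be controlled by dimension counts on the associated graded side.

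Third, the key representation-theoretic input is a genericity statement: any finitely generated $\widehat{U(\mathfrak{g})}$-module $M$ arising from completed homology has associated graded supported along a subvariety whose transverse directions force the tensor $\gr M \otimes \gr W_{\bk}$ to grow strictly slower than $\dim_{\C} W_{\bk} \cdot \rank M$. At each complex place $v_i$, the split $\mathfrak{sl}_2(F_{v_i}\otimes\Qp) \cong \mathfrak{sl}_2 \oplus \mathfrak{sl}_2$ separates a holomorphic and an antiholomorphic factor, and genericity produces a direction on which $M$ acts faithfully, shaving one linear factor $k_i$ off the naive $\Delta(\bk)$-count. Optimising over $i$ yields the saving by $(\min_i k_i)^{-1}$ in (\ref{upp bound}); the parallel-weight corollary is then immediate, since $\Delta(\bk) = k^r$ and $\min_i k_i = k$ give $k^{r-1}$.

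The hard step is the genericity assertion itself: one must verify that finitely generated Iwasawa modules, once microlocalised, really do satisfy the expected codimension bound on the PBW-graded side, and then carefully match this bound against a compatible good filtration on $W_{\bk}$ coming from its highest-weight structure so that the $\min_i k_i$ factor emerges sharply rather than as a weaker power of $k_i$.
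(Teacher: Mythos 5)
Your outline correctly tracks the paper's global strategy: Eichler--Shimura to move to cohomology of $Y(K_f)$, Emerton's spectral sequence $H_i(G, \wH_j(K^p) \otimes W_{\bk}) \Rightarrow H_{i+j}(Y(K_f), W_{\bk})$, the fact that $\wH_j$ is a finitely generated \emph{torsion} $\QQ_p[[G]]$-module when $F$ is not totally real, and the microlocalisation $\QQ_p[[G]] \to \widehat{U(\frak{g}_K)}$ with PBW filtration on the target. The reduction of the theorem to a degree-$(r-1)$ multiplicity-free polynomial bound on $\dim H_i(G, \wH_j \otimes W_\bk)$, and the observation that this gives the $(\min_i k_i)^{-1}$ saving and the parallel-weight $O(k^{r-1})$, are both exactly what the paper does.

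However, you have misidentified the decisive genericity statement, and your version would fail. You phrase genericity as a codimension or support condition on the PBW-associated graded of the microlocalised module. That is not sufficient: the paper's own cautionary example $U(\frak{g})/(\Delta_1 - \Delta_2)$ (Casimirs for two $\frak{sl}_2$ factors) has associated graded cut out by a single nonzero element, hence codimension one, yet its parallel-weight algebraic quotients $W_{(k,k)}$ grow \emph{quadratically}, not linearly. A support bound on $\gr M$ cannot rule out this kind of conspiracy, because $\Delta_1-\Delta_2$ is annihilated by all parallel infinitesimal characters simultaneously. The correct statement (Theorem \ref{mic inf sp} / \ref{main spec}) is that for every infinitesimal character $\lambda$, the composite $\QQ_p[[G]] \to \widehat{U(\frak{g})} \otimes_{Z(\frak{g}),\lambda} \QQ_p$ is \emph{injective}; that is, no nonzero Iwasawa-algebra element lies in the annihilator of any single central specialization. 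This is a genuinely different and much finer statement than a codimension bound on the graded side, and its proof goes through locally analytic principal series, the distribution algebra $D(G,\QQ_p)$ and its Banach completions $D_r(G,\QQ_p)$, and Frommer--Kohlhaase's description of $U_r(\frak{g})$ --- none of which appear in your sketch. Relatedly, the ``holomorphic vs.\ antiholomorphic'' split of $\frak{sl}_2 \oplus \frak{sl}_2$ at complex places is not where the saving comes from: at the $p$-adic level (after choosing $p$ split) all $r$ factors of $G$ are identical $\SL_2(\ZZ_p)$ factors, and the one-degree saving arises from torsionness (Calegari--Emerton) combined with the genericity of the image of $\QQ_p[[G]]$, not from any archimedean holomorphy distinction. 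Without the correct genericity input, the argument has a real gap at its central step.
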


\begin{cor}
The conjecture \ref{imag quad conj} is correct. Suppose $K_f$ is sufficiently small, for the arithmetic hyperbolic 3-manifold $Y(K_f)$ and cohomological degree $n=1,2$, we have the sharp bounds
\[ \dim_{\CC} H_{c}^n (Y(K_f), W_{\bk}) \sim_{K_f} k. \]
\end{cor}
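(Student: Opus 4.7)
The plan is to deduce the corollary from Theorem \ref{main uncond}, the Eichler--Shimura isomorphism (\ref{ES}), Poincaré--Lefschetz duality on the arithmetic hyperbolic 3-manifold $Y(K_f)$, and a standard base change construction.

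First I would specialise Theorem \ref{main uncond} to an imaginary quadratic $F$: with $r_1 = 0$, $r_2 = 1$, $\bk = (k)$, we have $\Delta(\bk) = k^2$ and $\min_i k_i = k$, so the bound (\ref{upp bound}) reads $\dim_\C S_\bk(K_f) \leq_{K_f} O(k)$. This is exactly Conjecture \ref{imag quad conj}. Since $r_1+r_2 = 1$ and $2^{r_1} = 1$, the Eichler--Shimura isomorphism (\ref{ES}) then gives $\dim_\C H_c^1(Y(K_f), W_\bk) = \dim_\C S_\bk(K_f)$, which immediately yields the upper bound $\dim H_c^1 \ll_{K_f} k$.

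For $n=2$, after replacing $K_f$ by a torsion-free subgroup of finite index (absorbing the index into the implicit constant), $Y(K_f)$ becomes an oriented hyperbolic 3-manifold of finite volume, and Poincaré--Lefschetz duality yields
\[H_c^2(Y(K_f), W_\bk) \cong H^1(Y(K_f), W_\bk^{\vee})^{\vee}.\]
Each factor $\Sym^j$ of $W_\bk$ is self-dual as an $\SL_2$-representation because the standard representation of $\SL_2$ carries an invariant symplectic form, so $W_\bk \cong W_\bk^{\vee}$. Using the Borel--Serre long exact sequence relating $H_c^\bullet$, $H^\bullet$, and $H^\bullet(\partial Y(K_f))$, one bounds $\dim H^1$ by $\dim H_c^1$ plus a boundary contribution; the latter is computed from cohomology of finitely many 2-tori with $W_\bk$ coefficients and grows at most as $O(k)$. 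Altogether this gives $\dim H_c^2 \ll_{K_f} k$.

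Finally, the matching lower bound $k \ll_{K_f} \dim H_c^n$ follows from base change: for suitable $K_f$, any classical weight-$k$ cuspidal eigenform on a congruence subgroup of $\SL_2(\Z)$ lifts to a weight-$(k,k)$ Bianchi cusp form in $S_\bk(K_f)$, and Shimizu's formula \cite{Shi63} over $\Q$ provides a $\gg k$-dimensional space of such classical forms. Via (\ref{ES}) this gives the lower bound for $H_c^1$, and Poincaré--Lefschetz duality transports it to $H_c^2$. The only delicate point I anticipate is the Eisenstein control for $H_c^2$---ensuring boundary contributions do not exceed linear growth---but this is routine for parallel weight since the cusps contribute principal-series-type cohomology of polynomially bounded dimension in $k$, and all heavy lifting has already been done in Theorem \ref{main uncond}.
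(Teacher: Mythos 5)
Your derivation of Conjecture \ref{imag quad conj} and of the upper bound on $H_c^1$ is correct and matches the paper: specialize Theorem \ref{main uncond} to $r_1 = 0$, $r_2 = 1$, get $\dim S_\bk(K_f) \ll_{K_f} k$, and read off $\dim H_c^1$ via (\ref{ES}). For the lower bound you invoke base change, which is also the paper's intended argument (following \cite{FGT10}).

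For $n=2$, however, you take a genuinely different route from what the paper sets up. In the proof of Theorem \ref{main uncond} (end of \S 6), the spectral sequence (\ref{comp SS}) together with Theorem \ref{main loc thm} bounds $\dim_{\CC} H_q(Y(K_f), V_\bk)$ for \emph{every} homology degree $q$, not merely the middle one; Poincar\'e duality $H_c^n \cong H_{3-n}$ then converts this to $H_c^2 \cong H_1$ directly, with no reference to the Borel--Serre boundary or to the Eichler--Shimura isomorphism for degree $2$. Your argument instead passes through $H_c^2 \cong H^1(Y, W_\bk^\vee)^\vee$, bounds $\dim H^1$ by $\dim H_c^1 + \dim H^1(\partial Y(K_f), W_\bk)$ via the long exact sequence of the pair, and asserts the boundary term is $O(k)$. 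That assertion is true (the cusps are $2$-tori, the local system restricted to a cusp stabilizer is a unipotent $\Z^2$-module, and a Clebsch--Gordan computation shows the invariants/coinvariants have dimension $O(k)$, hence so does $H^1$ of the boundary torus by the Euler characteristic relation), but it is an additional verification that the paper's route avoids entirely. Both approaches are valid; the paper's is more economical because the completed-homology bound on $H_q$ for all $q$ has already been established, whereas yours requires a separate control of Eisenstein/boundary cohomology. You should either cite or carry out the boundary estimate explicitly rather than declaring it routine, since it is exactly the place where an over-large boundary contribution could in principle destroy the degree-one saving.

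One small caveat worth flagging in either approach: the lower bound $k \ll_{K_f} \dim H_c^n$ via base change requires that for the given $K_f$ one can actually produce $\gg k$ base-changed forms of that level; this is the content of \cite{FGT10} and holds under mild conditions on $K_f$, but it is not literally automatic for arbitrary $K_f$, and the implied constant being positive is what needs justification there.
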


Compared to \cite[Thm 1, Cor 2]{Mar12} and \cite[Thm 1.1]{Hu21}, we get a degree-one saving bound and we do not need $\epsilon$ weakening.
Let $F_p = F \otimes_{\QQ} \QQ_p$ with ring of integers $\cO_{p}$.
It is very worth noting that both of \cite[Thm 1, Cor 2]{Mar12} and \cite[Thm 1.1]{Hu21} crucially use the $\SL_2(F_p)$-action on the completed homology, but we only make uses of the group action of the first congruence subgroup of $\SL_2(\cO_p)$.

If $F$ only admits one complex place, or equivalently $r_2 = 1$, it seems likely (\ref{upp bound}) gives a sharp upper bound by heuristics from the Calegari-Emerton conjecture.

Let $K$ be a finite extension of $\QQ_p$. 
Now we fix a compact open level subgroup $G \subset \SL_2(\cO_{p})$.
If $K_f$ further decomposes as \[ K_f = K_p K^p ~ \for ~ K_p \subset G, ~ K^p \subset \SL_2(\AA^{p,\infty}), \]
we introduce completed homology of tame level $K^p$ (see \cite{CE09}) as
\begin{eqnarray}\label{comp homo} \wH_{\bullet}(K^p):=\varprojlim_{s} \varprojlim_{K_p \subset G} H_{\bullet} (Y(K_pK^p), \ZZ / p^{s} \ZZ) \otimes_{\ZZ_p} K. \end{eqnarray}
They are finitely generated modules over the Iwasawa algebra $K[[G]]$ (see \cite{Eme06}).

To prove Thm \ref{main uncond}, we establish the following theorem on sub-polynomial growth of algebraic quotients and their higher cohomology of a finitely generated Iwasawa $\QQ_p[[G]]$-module for $G$ being a product of uniform (\cite[\S 4]{DDSMS99}) pro-$p$ compact open subgroups of $\SL_2(\ZZ_p)$.
The main theorem therefore can be obtained by using a fundamental spectral sequence due to Emerton \cite{Eme06}, \cite{CE09} (also see \cite{Mar12} and \cite{Hu21}).

By \cite{Ven02}, the Iwasawa algebra $K[[G]]$ is Auslander regular.
In particular, it is of finite global dimension. 
The $G$-homology of any Iwasawa module should be vanishing above a degree which only depends on $G$.
A multiplicity-free polynomial $p_{\wtM}$ of $\bk$ is a polynomial such that for each monomial term $k_1^{g_1}\cdots k_r^{g_r}$ of $p_{\wtM}$, each $g_i$ is at most 1 for $1 \leq i \leq r$.

For each $1 \leq i \leq r$, let \[ G_i := (I_2 + p \mathrm{M}_2(\ZZ_p)) \cap \SL_2(\ZZ_p). \]
Let $G = \prod\limits_{i=1}^r G_i$, $\bk = (k_1,\cdots,k_r) \in \NN^r$, and $W_{\bk}$ be the algebraic representation $\boxtimes_{i=1}^{r} \Sym^{k_i}$ of $G$.

\begin{thm}\label{main loc thm}
Let $\wtM$ be a finitely generated $\QQ_p[[G]]$-module of rank $d$.
There exists a multiplicity-free polynomial $p_{\wtM}$ of $\bk$ of degree at most $r-1$ associated to $\wtM$ such that
for any $\bk \in \NN^r$ and any $i \geq 1$, 
\[ | \dim_{\QQ_p} H_0(G, \wtM \otimes W_\bk) - d \prod_{i=1}^r (k_i+1) | \leq p_{\wtM}(\bk), \]
\[ \dim_{\QQ_p} H_i(G, \wtM \otimes W_\bk) \leq p_{\wtM}(\bk). \]
\end{thm}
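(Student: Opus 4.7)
The plan is to isolate the main term $d\prod_i(k_i+1)$ by analyzing the ``free'' part of $\wtM$ via a tensor-untwisting trick, and to bound the error coming from the ``torsion'' part via the Ardakov--Wadsley microlocalisation. Set $\mathfrak{g} = \bigoplus_{i=1}^r \mathrm{Lie}(G_i)$, a sum of $r$ copies of $\mathfrak{sl}_2$; then $\dim_{\QQ_p}\mathfrak{g} = 3r$ equals the GK-dimension of $K[[G]]$.

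First, I would write $\wtM$ as an extension $0 \to \wtM_{\mathrm{tor}} \to \wtM \to \wtM_{\mathrm{free}} \to 0$, with $\wtM_{\mathrm{free}}$ pseudo-free of rank $d$ (agreeing with $K[[G]]^d$ up to pseudo-null modules) and $\wtM_{\mathrm{tor}}$ of GK-dimension at most $3r-1$. For a genuinely free summand $K[[G]]^d$, the tensor-untwisting isomorphism of $K[[G]]$-modules
\[ K[[G]] \otimes_K W_\bk \xrightarrow{\sim} K[[G]]^{\oplus \dim W_\bk}, \qquad g \otimes v \mapsto g \otimes g^{-1} v, \]
(diagonal action on the left, left-regular action on the right) identifies $\wtM_{\mathrm{free}} \otimes W_\bk$ with a free module of rank $d\dim W_\bk$, so $\dim H_0(G, \wtM_{\mathrm{free}} \otimes W_\bk) = d \prod_i(k_i+1)$ and $H_i = 0$ for $i \geq 1$. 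By the long exact sequence in homology, the required error terms are controlled by $\dim H_*(G, \wtM_{\mathrm{tor}} \otimes W_\bk)$ (plus a pseudo-null contribution absorbable into $p_{\wtM}$), so it suffices to bound these by a multiplicity-free polynomial of degree at most $r-1$.

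For the torsion piece, I would apply the Ardakov--Wadsley microlocalisation of $K[[G]]$, the non-commutative analogue of inverting the symbol of a non-zero divisor in the PBW graded ring $\Sym(\mathfrak{g})$. The heart of the argument (to be proven separately) is the genericity statement: any finitely generated module over the microlocalised algebra carries a good PBW filtration whose associated graded is finitely generated over a Laurent-type graded ring of Krull dimension equal to the GK-dimension of the module. For $\wtM_{\mathrm{tor}}$, whose GK-dimension is at most $3r-1$, this translates into a polynomial growth bound on the PBW-filtered pieces, with the ``missing'' dimension distributed among the $r$ factors $\mathfrak{sl}_2 \subset \mathfrak{g}$.

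Finally, I would compute $H_*(G, \wtM_{\mathrm{tor}} \otimes W_\bk) = \Tor_*^{K[[G]]}(K, \wtM_{\mathrm{tor}} \otimes W_\bk)$ via the Chevalley--Eilenberg resolution and combine it with the PBW growth bound from the previous step. Because $W_\bk = \boxtimes_i \Sym^{k_i}$ factors through the product structure $G = \prod_i G_i$, each direction $i$ in which the torsion module has lost a dimension replaces the factor $(k_i+1)$ in the growth estimate by a constant, producing the multiplicity-free shape. Summing over the finitely many torsion constituents and over the cohomological degrees $i \geq 1$ (bounded above by the global dimension of $K[[G]]$, which is finite by Venjakob's Auslander regularity) delivers the polynomial $p_{\wtM}(\bk)$ of degree at most $r-1$. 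The main obstacle is the genericity step: it is here that the novel representation-theoretic input enters, and it requires delicate non-commutative PBW analysis beyond the classical commutative picture.
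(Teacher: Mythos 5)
Your high-level outline is reasonable -- isolate the free contribution, microlocalise the torsion part, and appeal to genericity -- and the tensor-untwisting observation $K[[G]]\otimes_K W_\bk\simeq K[[G]]^{\oplus\dim W_\bk}$ is a clean way to get the main term (the paper instead runs the free part through the microlocalisation too, via $\widehat{U(\frak g)}/\widehat{\Ann(\bk)}$). However, the central step has a genuine gap.

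You phrase the genericity input as: GK-dimension $\le 3r-1$ for the torsion module forces, via a PBW filtration, polynomial growth of multiplicity of degree at most $r-1$, with the ``missing dimension distributed among the $r$ factors.'' This is false as stated, and the paper explicitly records a counterexample. Take $r=2$, $\frak g=\frak{sl}_2\oplus\frak{sl}_2$ with Casimirs $\Delta_1,\Delta_2$, and the cyclic torsion module $U(\frak g)/(\Delta_1-\Delta_2)$: its GK-dimension drops by one, yet the $W_{(k,k)}$-multiplicity grows quadratically, i.e.\ degree $r=2$, not $r-1$. The missing dimension is \emph{not} distributed among the factors; the relation $\Delta_1=\Delta_2$ couples them. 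What saves the situation for Iwasawa-module torsion quotients is not the dimension drop but the fact that $\QQ_p[[G]]$ contains no Casimir-type elements at all, so that any nonzero $\delta\in\QQ_p[[G]]$ stays nonzero in \emph{every} specialization $\wUg_\lambda$ (Theorem~\ref{main spec}). This is the precise genericity property, and it is what feeds into Theorem~\ref{upper bound} to produce a bound uniform over all central characters; without it the degree-$(r-1)$ claim simply fails. Your proposal identifies that something called genericity is needed, but misidentifies what it is and why it holds for the modules at hand.

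Two smaller issues. First, your decomposition $0\to\wtM_{\mathrm{tor}}\to\wtM\to\wtM_{\mathrm{free}}\to0$ only yields a torsion-free quotient, not a free (nor pseudo-free without extra argument) one; the paper instead takes $0\to\QQ_p[[G]]^d\to\wtM\to\widetilde Q\to0$ with $\widetilde Q$ torsion, which exists over any Noetherian domain and is what the inductive bookkeeping on $H_i$ ($i\ge 1$) via the long exact sequence actually requires (you also need a second presentation $0\to\widetilde N\to\QQ_p[[G]]^l\to\wtM\to0$ to bound $H_0$ from below and to propagate to higher $H_i$). Second, you propose computing $H_*(G,-)$ by a Chevalley--Eilenberg resolution; passing from group homology of the compact $p$-adic group $G$ to Lie algebra homology requires Lazard's isomorphism and is not automatic. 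The paper bypasses this entirely by working with $\mathrm{Tor}^{K[[G]]}$, acyclicity of free modules, and the long exact sequence, together with the comparison $H_0(G,\wtM\otimes W_\bk^\ast)^\ast\simeq\Hom_{\wUgK}(\whM,W_\bk)$ from Theorem~\ref{KG quo}.
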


Let us first consider the growth of algebraic representations for some simplest $K[[G]]$-modules.

If $\wtM$ is of canonical dimension $0$, then $\wtM$ is finite dimensional by \cite[Lem 10.13]{AW13}. The polynomials can be chosen to be constants (of degree $0$).

If $\wtM \simeq K[[G]]$ as the module over itself, in section \S \ref{loc app}, we explicitly exhibit \[ \Hom_{K[[G]]} (K[[G]], W_{\bk}) \simeq \Hom_{K[[G]]} (\End_K(W_{\bk}), W_{\bk}). \]
Therefore \[ \dim_K \Hom_{K[[G]]} (K[[G]], W_{\bk}) = \dim_K W_{\bk} = \prod_{i=1}^r (k_i+1). \]
This is an analogue of the classical algebraic Peter-Weyl theorem for $\SL_2$.

Let $\frak{g}_K$ be the Lie algebra of $G$ with coefficients in $K$.
To deduce Thm \ref{main loc thm}, we pass our problem to an algebraic microlocalisation of the Iwasawa algebra via a completed universal enveloping algebra $K[[G]] \to \widehat{U(\frak{g}_K)}$ introduced in \cite{AW13}. 
In \S \ref{loc app}, we further apply homological degree-shifting arguments to reduce to only treat the degree-zero case for a cyclic torsion Iwasawa module $\wtM$.

The completed enveloping algebra $\widehat{U(\frak{g}_K)}$ is the $p$-adic completion of the usual universal enveloping algebra $U(\frak{g}_K)$. 
$W_\bk$ admits actions of $\wUgK$ and $U(\frak{g}_K)$.

Although the Iwasawa algebra has a very small center \cite{Ard04}, $\widehat{U(\frak{g}_K)}$ has a larger center containing Casimir operators $\Delta_1,\cdots,\Delta_r$.
If $K = \QQ_p$ and $\lambda = (\lambda_1,\cdots,\lambda_r) \in \ZZ_p^r$, we may specialize $\Delta_i$ to be $\lambda_i$ for $1 \leq i \leq r$.
We use $\wUg_\lambda$ to denote this specialization, the goal of \S \ref{specialization} is to prove the following theorem.
\begin{thm}\label{main spec}
For any $\lambda \in \ZZ_p^r$, the following map is injective \begin{eqnarray}\label{Iwa to spec} \QQ_p[[G]] \hookrightarrow \wUg_\lambda. \end{eqnarray}
\end{thm}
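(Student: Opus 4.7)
The plan is to factor $\QQ_p[[G]] \to \wUg_\lambda$ through the two-step composition $\QQ_p[[G]] \xrightarrow{\pi} \wUg \twoheadrightarrow \wUg_\lambda$ and verify injectivity of each piece separately. The map $\pi$ is defined on topological generators by $g \mapsto \exp(\log g)$: since $G$ is uniform pro-$p$ with $p \geq 3$, $\log g \in p\mathfrak{g}_{\ZZ_p}$, so the exponential series converges in the Banach algebra $\wUg$. Equip both $\QQ_p[[G]]$ and $\wUg$ with their natural $p$-valuation/PBW filtrations; the relation $\exp(\log g_i) - 1 \equiv \log g_i$ modulo strictly higher filtration degree shows that $\pi$ induces, on associated graded, the identity on the natural generators of two polynomial $\FF_p$-algebras in $d = \dim G$ variables. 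Since the graded map is an isomorphism, $\pi$ is injective; this is essentially contained in \cite{AW13}. The theorem thus reduces to showing that $\pi(\QQ_p[[G]]) \cap (\Delta_1 - \lambda_1, \ldots, \Delta_r - \lambda_r)\wUg = 0$ inside $\wUg$.

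To prove this triviality of intersection, I would exhibit a $\wUg_\lambda$-module $V_\lambda$ on which $\QQ_p[[G]]$ acts faithfully via the composite. Any $y$ in the intersection would act as $0$ on $V_\lambda$ (the $\wUg_\lambda$-action kills each $\Delta_i - \lambda_i$) and must then vanish by faithfulness. The natural candidate for $V_\lambda$ is a locally analytic principal series: choose a locally analytic character $\chi : B \cap G \to K^\times$, over a sufficiently large extension $K/\QQ_p$ (for each $\mathfrak{sl}_2$-factor one solves $\mu(\mu+2) = \lambda_i$, which requires at worst a quadratic extension), whose derivative realises the prescribed infinitesimal character $\lambda$, and set $V_\lambda = \Ind_{B \cap G}^{G}(\chi)^{\la}$ in the Schneider--Teitelbaum sense. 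Admissibility endows $V_\lambda$ with a continuous $D(G, K)$-action, which restricts to a $\QQ_p[[G]]$-action; the derived $\mathfrak{g}_K$-action has central character exactly $\lambda$ by construction and so factors through $\wUg_\lambda$, making both actions compatible.

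The main obstacle is verifying faithfulness of $\QQ_p[[G]] \curvearrowright V_\lambda$. I would reduce this via an Iwahori-type decomposition $G = (N^- \cap G)(B \cap G)$: restriction to $N^- \cap G$ identifies $V_\lambda$ with the space of locally analytic $K$-valued functions on the compact $p$-adic ball $N^- \cap G$, and under this identification the Iwasawa algebra acts by convolution. Faithfulness then reduces to the standard density fact that convolution of measures on a compact $p$-adic Lie group acts faithfully on locally analytic functions (approximation of Dirac point-masses by test functions). Some care is still needed to ensure the Schneider--Teitelbaum completion carries a genuine $\wUg$-action (not only a $U(\mathfrak{g}_K)$-action), but admissibility of the principal series guarantees this; the argument must also be robust enough to cover the full range $\lambda \in \ZZ_p^r$, including the integral points at which $V_\lambda$ may become reducible, in which case one replaces $V_\lambda$ by a suitable irreducible constituent still carrying central character $\lambda$.
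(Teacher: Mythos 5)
Your first step is fine: the microlocalisation $\pi\colon \QQ_p[[G]] \to \wUg$ is indeed injective (it is in fact faithfully flat, as Ardakov--Wadsley show), and the theorem does reduce to showing $\pi(\QQ_p[[G]]) \cap \sum_i (\Delta_i-\lambda_i)\wUg = 0$. The reduction to exhibiting a $\wUg_\lambda$-module on which $\QQ_p[[G]]$ acts faithfully through the composite is also a correct strategy, and the locally analytic principal series is the right candidate.

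The gap is in the last paragraph. You write that faithfulness "reduces to the standard density fact that convolution of measures on a compact $p$-adic Lie group acts faithfully on locally analytic functions." That density fact shows $\QQ_p[[H]]$ acts faithfully on $C^\la(H,K)$ \emph{for the same group $H$}, by duality with $C(H,K)$. But under the Iwahori decomposition your $V_\lambda$ is identified with $C^\la(N^- \cap G, K)$, which is a function space on a manifold of dimension $r$, whereas you need faithfulness of the Iwasawa algebra of the full group $G$ of dimension $3r$. Only the subalgebra $\QQ_p[[N^- \cap G]]$ acts by convolution on that ball; the action of the Borel part involves genuine $p$-adic Möbius transformations and is emphatically not convolution, and faithfulness of a subalgebra gives you nothing about the whole. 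The assertion you need -- that $\QQ_p[[G]]$ acts faithfully on a locally analytic principal series (equivalently, that affine Verma modules for $\frak{sl}_2^r$ are faithful over the Iwasawa algebra) -- is precisely the main theorem of Ardakov--Wadsley, \emph{Verma modules for Iwasawa algebras are faithful} (\cite{AW14}), and it is a substantial result, not a density observation. So your argument is essentially an unproved appeal to \cite{AW14}; the paper itself notes this route is available, but gives a genuinely different proof: it shows instead that the $\lambda$-isotypic part $C^\la(G,\QQ_p)[\lambda]$ is \emph{dense in $C(G,\QQ_p)$} (a weaker and much easier statement, proved by multiplying a nowhere-vanishing principal series vector by smooth functions), obtains injectivity of $\QQ_p[[G]] \to D(G,\QQ_p)\otimes_{Z(\frak{g}),\lambda}\QQ_p$, and then uses the filtration on the Banach completions $D_r$, the crossed-product structure over $U_r(\frak{g})$, and a dimension-theory argument to descend from the distribution algebra to $\wUg_\lambda$. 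The payoff of the paper's route is that it avoids AW14; the cost is the extra algebra needed to pass from $D^\lambda$ back to $\wUg_\lambda$. If you wish to salvage your version, you must either cite \cite{AW14} outright for the faithfulness step, or replace the "density of point-masses" claim with a real argument tracking how the full $G$-action interacts with $C^\la(N^-\cap G,K)$.
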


The right hand side of (\ref{Iwa to spec}) as a Noetherian algebra has a smaller dimension compared to the left hand side, a priori the kernel of it is a two-sided ideal. But it surprisingly turns out to be zero.

After we completed this paper, Konstantin Ardakov pointed it out to us that Theorem \ref{main spec} can be deduced from the main results of \cite[Thm 4.6, Thm 5.4]{AW14}.
Since our proof is different and the intermediate results may be of independent interest, we still include \S \ref{specialization} as part of the paper.

A \emph{generic} element $\delta \in \wUg$ is an element such that the image of $\delta$ under the specialization $\wUg \twoheadrightarrow \wUg_\lambda$ is non-zero for any $\lambda \in \ZZ_p^r$.
Theorem \ref{main spec} asserts that the image of the Iwasawa algebra via the microlocalisation consists of generic elements (\S \ref{gro enve}) of the completed enveloping algebra.

\begin{thm}\label{growth}
Let $\whM$ be a cyclic torsion module over $\wUgK$ with a generator killed by a generic element. 
There exists a multiplicity-free polynomial $p_{\whM}$ in $r$ variables of degree at most $r-1$ such that 
for any $\bk \in \NN^r$
\[ \dim_K \Hom_{\wUgK}(\whM, W_\bk) \leq p_{\whM}(\bk) ~ \for ~ \bk \in \NN^r. \]
\end{thm}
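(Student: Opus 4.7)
My plan is to reduce Theorem \ref{growth} first to a bound on the kernel of a single operator acting on the finite-dimensional representation $W_\bk$, and then to a commutative algebra estimate via the PBW filtration on $\wUgK$.

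First, since $\whM$ is cyclic with a generator $m$ satisfying $zm = 0$ for a generic element $z \in \wUgK$, there is a canonical surjection $\wUgK / \wUgK z \twoheadrightarrow \whM$, which by contravariant functoriality of $\Hom$ yields an inclusion
\[ \Hom_{\wUgK}(\whM, W_\bk) \hookrightarrow \Hom_{\wUgK}(\wUgK / \wUgK z, W_\bk) \cong \ker\bigl(z : W_\bk \to W_\bk\bigr). \]
It therefore suffices to bound $\dim_K \ker(z|_{W_\bk})$ by a multiplicity-free polynomial in $\bk$ of degree at most $r-1$.

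Next I will install the standard PBW filtration on $\wUgK$, whose associated graded is (a completion of) the symmetric algebra $\Sym(\mathfrak{g}_K)$, a polynomial ring in $3r$ variables under the identification $\mathfrak{g}_K \simeq \mathfrak{sl}_2^{\oplus r}$. Equipping $W_\bk$ with the good filtration generated by a highest-weight vector makes $\gr W_\bk$ into a finitely generated graded module over $\Sym(\mathfrak{g}_K)$. The genericity of $z$ from \S\ref{gro enve}, in particular satisfied by images of the Iwasawa algebra via Theorem \ref{main spec}, ensures that the principal symbol $\sigma(z)$ is a nonzero homogeneous element. A standard filtered-to-graded comparison then gives
\[ \dim_K \ker\bigl(z|_{W_\bk}\bigr) \leq \dim_K \ker\bigl(\sigma(z) \cdot : \gr W_\bk \to \gr W_\bk\bigr), \]
converting the problem into a commutative one about multiplication by $\sigma(z)$ on $\gr W_\bk$.

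For the commutative estimate, the key geometric input is that each tensor factor $\Sym^{k_i}$ has characteristic variety inside the nilpotent cone $\cN_i \subset \mathfrak{sl}_2^*$, so $\gr W_\bk$ is supported on $\cN^r = \prod_i \cN_i$, and the Casimirs $\Delta_i$ act as explicit scalars cutting out the individual components. I expect the notion of ``generic'' to unpack precisely as the statement that $\sigma(z)$ restricts to a non-zero-divisor on each Cartesian component of this support, uniformly in $\bk$. A factor-by-factor Hilbert-series estimate then yields a bound of the shape
\[ \dim_K \ker\bigl(\sigma(z) : \gr W_\bk \to \gr W_\bk\bigr) \leq \sum_{S \subsetneq \{1,\ldots,r\}} c_S \prod_{i \in S}(k_i + 1), \]
which on expansion is a multiplicity-free polynomial of degree at most $r-1$ in $\bk$.

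The main obstacle will be this last step: translating the abstract notion of genericity from \S\ref{gro enve} into the concrete transversality statement that $\sigma(z)$ acts as a non-zero-divisor on each irreducible component of the support of $\gr W_\bk$, uniformly over all $\bk$, and organising the resulting kernel decomposition so that every contributing term loses at least one factor of $k_i$. Once this structural statement is secured, tracking the Hilbert polynomial across the $r$ tensor factors is essentially mechanical and produces the claimed multiplicity-free bound.
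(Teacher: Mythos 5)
Your first reduction is the same as the paper's: $\Hom_{\wUgK}(\whM, W_\bk) \hookrightarrow \Hom_{\wUgK}(\wUgK/\wUgK z, W_\bk)$, after which you need to bound the multiplicity of $W_\bk$ in $\wUgK/\wUgK z$. But the next step contains a genuine gap that you do not notice: the principal symbol $\sigma(z)$ with respect to the PBW filtration is \emph{not well-defined} for $z \in \wUgK$. An element of the completed enveloping algebra is a $p$-adically convergent power series in PBW monomials of unbounded degree, so it lies in no finite filtration step and has no leading term. The paper resolves exactly this by reducing modulo $p^{n_\delta}$, where $\widehat{U(\frak{g}_0)}/p^{n_\delta} \simeq U(\frak{g}_0)/p^{n_\delta}$ and $\delta$ finally lives in a finite degree $F_\alpha$; and the crucial input making $n_\delta$ uniform in $\bk$ is Lemma \ref{generic compact}, a compactness argument over $\ZZ_p^r$ whose role your sketch does not anticipate.

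The second gap is one you identify yourself: that genericity makes $\sigma(z)$ act as a non-zero-divisor on the supports, uniformly in $\bk$. This is indeed the heart of the matter, and the paper's route to it is different from what you propose. Rather than a Hilbert-series estimate on the nilpotent cone, the paper proves directly that $U_\bk^\circ/\varpi$ is an integral domain (Lemma \ref{Ug quot cent}) so that $e\delta \neq 0$ whenever $e \notin \varpi U_\bk^\circ$, and then uses the explicit description $F_\bk U_\bk \xrightarrow{\sim} \End_K(W_\bk)$ (Proposition \ref{sl2 end}) to count: multiplication by $\delta$ embeds $F_{\bk-\alpha}U_\bk$ into $\End_K(W_\bk)$, giving an image of dimension $\geq \prod(k_i-\alpha_i+1)^2$ and hence a cokernel of size $\leq \prod(k_i+1)^2 - \prod(k_i-\alpha_i+1)^2$. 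Dividing by $\dim W_\bk = \prod(k_i+1)$ yields the degree $r-1$ multiplicity-free bound. Your characteristic-variety picture is a reasonable heuristic and might be made rigorous, but as written the key transversality statement is assumed rather than proved, and the ill-definedness of $\sigma(z)$ in the completion is a hard obstacle your plan does not address.
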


We will prove some comparison results identifying $W_{\bk}$-quotients of the original Iwasawa module $\wtM$ and $W_{\bk}$-quotients of its microlocalisation $\whM = \wUgK \otimes_{K[[G]]} \wtM$. 
Thm \ref{main loc thm} will be deduced from combining Thm \ref{growth} and Thm \ref{main spec}. 

Finally, we prove Thm \ref{growth} by estimating the growth of dimension of a Poincar\'e–Birkhoff–Witt filtration on $\whM$. 
There is a natural integral model $\widehat{U(\frak{g}_0)}$ of $\wUgK$. It is important for us to consider the image of $\widehat{U(\frak{g}_0)}$ in $\End(W_\bk)$.
And the PBW filtration is linked to multiplicities by Prop \ref{sl2 end}.

\subsection*{Acknowledgments}
I would like to thank Yongquan Hu for helpful discussions. 
I want to thank Richard Taylor for checking the details of this paper and catching an error in our earlier draft.
I would also like to thank Konstantin Ardakov, Shilin Lai, Lue Pan, Zicheng Qian, and Peter Sarnak for interesting conversations about this paper, as well as the anonymous referee for expository corrections.

\section{Results on algebra}\label{Algebra}

The rings of interest will always be Noetherian.

\begin{df}\label{rank}
If $A$ is an integral domain (not necessarily commutative), and $M$ is a finitely generated $A$-module. 
The field of fractions $\cL$ of $A$ is a division ring that is flat over $A$ on both sides.
We use $\dim_{\cL} \cL \otimes_{A} M$ to denote the \emph{rank} of $M$.
\end{df}

Let $\Lambda$ be a partially ordered abelian group.
A $\Lambda$-filtration $F_\bullet A$
on a ring $A$ is a set $\{F_{\lambda} A | \lambda \in \Lambda \}$ of additive subgroups of $A$ such that
\begin{itemize}
\item $1 \in F_0 A$,
\item $F_{\lambda} A \subset F_{\mu} A$ whenever $\lambda \leq \mu$,
\item $F_{\lambda} A \cdot F_{\mu} A \subset F_{\lambda+\mu} A$ for all $\lambda, \mu \in \Lambda$.
\end{itemize}
The filtration on $A$ is said to be \emph{separated} if $\bigcap_{\lambda \in \Lambda} F_{\lambda} A=\{0\}$, and it is said to be \emph{exhaustive} if $\bigcup_{\lambda \in \Lambda} F_{\lambda} A=A$. 

In a similar way, given a $\Lambda$-filtered ring $F_\bullet A$ and an $A$-module $M$, a filtration of $M$ is a set $\{F_\lambda M | \lambda \in \Lambda \}$ of additive subgroups of $M$ such that
\begin{itemize}
\item $F_{\lambda} M \subset F_{\mu} M$ whenever $\lambda \leq \mu$,
\item $F_{\lambda} A \cdot F_{\mu} M \subset F_{\lambda+\mu} M$ for all $\lambda, \mu \in \Lambda$.
\end{itemize}
Again, the filtration on $M$ is said to be \emph{separated} if $\bigcap_{\lambda \in \Lambda} F_{\lambda} M = \{0\}$, and it is said to be \emph{exhaustive} if $\bigcup_{\lambda \in \Lambda} F_{\lambda} M = M$.

If $\Lambda \subset \RR$, we can define graded rings and modules for the $\Lambda$-filtration.
Let $A$ be a $\Lambda$-filtered ring. 
For any $\lambda \in \Lambda$, we put
\[ F_{\lambda-} A := \bigcup_{s < \lambda} F_s A. \]
The \emph{associated graded ring} is defined to be
\[ \gr(A) := \bigoplus_{\lambda \in \Lambda} F_\lambda A / F_{\lambda-} A.\]
Given a filtered $F_\bullet A$ module $F_\bullet M$, we similarly define \emph{associated graded module} ($F_{\lambda-} M$ is similarly defined)
\[ \gr(M) := \bigoplus_{\lambda \in \Lambda} F_\lambda M / F_{\lambda-} M.\]
$\gr(M)$ is a natural $\gr(A)$-module.
For any $m \in F_\lambda M \backslash F_{\lambda-} M$, we use \[ \gr(m) \in \frac{F_\lambda M}{F_{\lambda-} M} \subset \gr(M) \] to denote the corresponding principal symbol. 

\begin{lem}\label{grade ring quotient generator}
Let $A$ be a $\Lambda$-filtered ring with $\Lambda \subset \RR$. 
Suppose for any non-zero $x \in A$, there exists $\lambda_x \in \Lambda$ such that $x \in F_{\lambda_x} A \backslash F_{\lambda_x -} A$.
If $\frak{a} \in F_a A \backslash F_{a-} A$ for $a \in \Lambda$, and $\gr(\frak{a})$ is a non-zero divisor of $\gr(A)$, we have
\[ \gr(A / A\cdot \frak{a}) \simeq \gr(A) / \gr(A) \cdot \gr(\frak{a}), \]
where the filtration on $A / A\cdot \frak{a}$ is induced from the filtration on $A$.
\end{lem}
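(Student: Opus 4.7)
The plan is the standard leading-term argument for filtered rings, with the non-zero-divisor hypothesis on $\gr(\mathfrak{a})$ used exactly once to rule out cancellation. Equip $N := A/A\mathfrak{a}$ with the induced filtration $F_\lambda N := (F_\lambda A + A\mathfrak{a})/A\mathfrak{a}$. The quotient map $A \twoheadrightarrow N$ is filtered of degree $0$ and hence induces a surjective map of graded $\gr(A)$-modules
\[ \varphi \colon \gr(A) \twoheadrightarrow \gr(N). \]
Since $\mathfrak{a}$ itself maps to $0$ in $N$, the element $\gr(\mathfrak{a}) \in F_a A / F_{a-}A$ lies in $\ker \varphi$, so $\varphi$ factors through an epimorphism
\[ \overline{\varphi} \colon \gr(A) / \gr(A)\cdot\gr(\mathfrak{a}) \twoheadrightarrow \gr(N). \]
The content is that $\overline{\varphi}$ is injective.

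To check this, fix $\lambda \in \Lambda$ and take $x \in F_\lambda A$ whose image $\overline{x} \in F_\lambda N$ lies in $F_{\lambda-}N$. By definition of the induced filtration there exist $s<\lambda$, $y \in F_s A$ and $b \in A$ with $x - y = b \mathfrak{a}$. Since $y \in F_{\lambda-}A$, the class of $x$ in $F_\lambda A/F_{\lambda-}A$ equals the class of $b\mathfrak{a}$. By the hypothesis, $b$ has a well-defined filtration degree $\lambda_b$ with $b \in F_{\lambda_b}A \setminus F_{\lambda_b-}A$; then $b\mathfrak{a} \in F_{\lambda_b + a}A$ and the key point is that its image in $F_{\lambda_b+a}A / F_{(\lambda_b+a)-}A$ equals $\gr(b)\cdot\gr(\mathfrak{a})$, which is nonzero because $\gr(\mathfrak{a})$ is a non-zero-divisor in $\gr(A)$. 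Hence $b\mathfrak{a} \notin F_{(\lambda_b+a)-}A$, which forces $\lambda_b + a \leq \lambda$. If the inequality is strict, then $b\mathfrak{a} \in F_{\lambda-}A$ so $\gr(x) = 0$, which lies in $\gr(A)\cdot\gr(\mathfrak{a})$ trivially. If $\lambda_b + a = \lambda$, then $\gr(x) = \gr(b\mathfrak{a}) = \gr(b)\cdot\gr(\mathfrak{a}) \in \gr(A)\cdot\gr(\mathfrak{a})$. Either way, the class of $x$ in the source of $\overline{\varphi}$ vanishes, proving injectivity.

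There is no serious obstacle: the only subtlety is verifying that the leading term of $b\mathfrak{a}$ does not cancel, which is precisely what the assumption on $\gr(\mathfrak{a})$ guarantees, and the hypothesis that every element of $A$ has a well-defined filtration degree is what lets us talk about $\lambda_b$ in the first place.
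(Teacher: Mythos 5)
Your proof is correct and takes essentially the same route as the paper's: both hinge on the observation that the non-zero-divisor hypothesis on $\gr(\mathfrak{a})$, combined with the assumption that every nonzero element has a leading degree, pins down the leading degree of $b\mathfrak{a}$ as $\lambda_b+a$. The paper packages this as the intersection identities $A\mathfrak{a}\cap F_\lambda A=F_{\lambda-a}A\cdot\mathfrak{a}$ and $A\mathfrak{a}\cap F_{\lambda-}A=F_{(\lambda-a)-}A\cdot\mathfrak{a}$ and then finishes with a nine-lemma diagram chase, while you unwind that chase and verify injectivity of $\overline{\varphi}$ directly on graded pieces, but the underlying degree computation is identical.
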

\begin{proof}
By the assumptions, $\frak{a} \in A$ is a non-zero divisor.
And for any $\lambda \in \Lambda$, we have
\[ A \cdot \frak{a} \cap F_\lambda A = F_{\lambda-a} A \cdot \frak{a}, ~ A \cdot \frak{a} \cap F_{\lambda-} A = F_{(\lambda-a)-} A \cdot \frak{a}. \]
By definition of the induced filtration,
\[ F_\lambda (A/A\cdot \frak{a}) = F_\lambda A / F_{\lambda-a} A \cdot \frak{a}. \]
For saving notation, let \[ \gr_\lambda A := F_\lambda A / F_{\lambda-} A ~ \text{and} ~ \gr_\lambda(A/A \cdot \frak{a}) := F_\lambda (A/A \cdot \frak{a}) / F_{\lambda-} (A/A \cdot \frak{a}). \]
We have the following commutative diagram with exact rows and exact columns
\[ \xymatrix{
& 0  & 0 & 0 \\
0 \ar[r] & F_{\lambda-} (A / A \cdot \frak{a}) \ar[r] \ar[u] & F_{\lambda} (A / A \cdot \frak{a}) \ar[r] \ar[u] & \gr_\lambda (A / A \cdot \frak{a}) \ar[r] \ar[u] & 0 \\
0 \ar[r] &F_{\lambda-} A \ar[r] \ar[u] & F_{\lambda} A \ar[r] \ar[u] & \gr_\lambda A \ar[r] \ar[u] & 0 \\
0 \ar[r] & F_{(\lambda-a)-} A \cdot \frak{a} \ar[r] \ar[u] & F_{(\lambda-a)} A \cdot \frak{a} \ar[r] \ar[u] & \gr_{\lambda-a} A \cdot \gr(a) \ar[r] \ar[u] & 0 \\
& 0 \ar[u] & 0 \ar[u] & 0. \ar[u] \\
} \]
The claim follows from the last vertical short exact sequence applying the snake lemma.
\end{proof}

\begin{lem}\label{ext comp}
Let $A$ be a Noetherian ring with a two-sided ideal $I$ such that $A / I$ is $p$-torsion free.
The $p$-adic completion $\hat{A}$ of $A$ has a two-sided ideal \[ \hat{I} := \varprojlim_{a} \left(\frac{(p^a)+I}{(p^a)}\right) = \hat{A} \cdot I = I \cdot \hat{A} \subset \hat{A}.\]
And $\hat{A} / \hat{I}$ is isomorphic to the $p$-adic completion of $A / I$.
\end{lem}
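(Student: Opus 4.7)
My plan is to first pin down the inverse-limit description of $\bar{I}$, identify it with the $p$-adic completion $\hat{I}$ of $I$ viewed as an $A$-module, and finally recognise this as the two-sided ideal generated by $I$ inside $\hat{A}$.

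The starting point is the key equality $p^a A \cap I = p^a I$, valid for every $a \geq 1$. If $p^a y \in I$ for some $y \in A$, then the image of $y$ in $A/I$ is annihilated by $p^a$ and hence vanishes by the $p$-torsion freeness hypothesis, so $y \in I$; the reverse inclusion is trivial. Since $p$ is central in $A$, the ideal $(p^a)$ coincides with the two-sided ideal $p^a A$, and the isomorphism theorem gives
\[ \frac{(p^a)+I}{(p^a)} \;\cong\; \frac{I}{I\cap p^a A} \;=\; \frac{I}{p^a I}. \]
Passing to the inverse limit then exhibits $\bar{I}$ as the $p$-adic completion $\hat{I}=\varprojlim_a I/p^a I$ of $I$ as an $A$-module.

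Next, the same computation reduces $0\to I\to A\to A/I\to 0$ modulo $p^a$ to a short exact sequence $0\to I/p^a I\to A/p^a A\to (A/I)/p^a(A/I)\to 0$. The inverse system $\{I/p^a I\}$ has surjective transition maps, hence satisfies Mittag-Leffler, so taking inverse limits produces
\[ 0 \to \hat{I} \to \hat{A} \to \widehat{A/I} \to 0, \]
which both identifies $\bar{I}$ with a subset of $\hat{A}$ and yields the last claim $\hat{A}/\bar{I} \cong \widehat{A/I}$. To realise $\bar{I}$ as $\hat{A}\cdot I$, I would pick a finite set of left generators $x_1,\dots,x_n$ of $I$ (possible by left Noetherianity); the surjection $A^n\twoheadrightarrow I$ induces surjections $(A/p^a A)^n\twoheadrightarrow I/p^a I$, and Mittag-Leffler again preserves surjectivity in the inverse limit, so $\hat{A}^n \twoheadrightarrow \hat{I}$ has image $\sum_i \hat{A}\,x_i = \hat{A}\cdot I$. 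The symmetric argument with right generators gives $\bar{I}=I\cdot\hat{A}$, and in particular $\bar{I}$ is stable under $\hat{A}$ on both sides and is therefore a two-sided ideal.

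The one subtle ingredient is the centrality of $p$ in $A$, which ensures that each $p^a A$ is a two-sided ideal and legitimises both the ring structure on $\hat{A}$ and the identifications $(p^a)=p^a A=A p^a$ used throughout. Once this is granted, the rest is a routine Mittag-Leffler argument, and I do not anticipate any serious technical obstacle beyond keeping track of the two-sided structure.
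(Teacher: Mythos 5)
Your proof is correct, and it takes a cleaner route than the paper's, though both rest on the same two ingredients. You isolate the key equality $I \cap p^a A = p^a I$ (a direct consequence of $p$-torsion freeness of $A/I$), use it to identify $\frac{(p^a)+I}{(p^a)} \cong I/p^a I$, and hence $\bar I$ with the $p$-adic completion $\hat I$ of $I$; you then obtain $\bar I = \hat A \cdot I$ by applying Mittag-Leffler to the finite presentation $A^n \twoheadrightarrow I$ coming from Noetherianity, and the final isomorphism $\hat A/\bar I \cong \widehat{A/I}$ from the same Mittag-Leffler lemma applied to $0 \to I \to A \to A/I \to 0$ reduced mod $p^a$. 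The paper instead proves the containment $\bar I \subset \hat A \cdot I$ by an explicit induction on $a$: given a compatible system $(\bar m^a)$, it constructs $p$-adic Cauchy sequences of coefficients $x_i^a$ in front of a fixed finite generating set $\{m_i\}$ of $I$, using $p$-torsion freeness at each step to push a discrepancy lying in $I \cap p^a A$ down into $p^a I$. That inductive construction is, in effect, a by-hand re-derivation of the Mittag-Leffler surjectivity of $\hat A^n \to \hat I$; your version packages it into the standard lemma, which is arguably more transparent and avoids the bookkeeping. Both arguments invoke \cite[02MY Lem 12.31.3]{Sta} (or the equivalent) for the last claim, and both use the (left and right) Noetherian hypothesis to get finite generating sets on each side. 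The only point worth flagging in your write-up is the unstated fact that the kernel system $\ker\bigl((A/p^a A)^n \to I/p^a I\bigr)$ has surjective transition maps, which is what Mittag-Leffler actually requires; this is true (lift any class to $N + p^a A^n$ with $N = \ker(A^n \to I)$ and note the $N$-part already lies in $N + p^{a+1}A^n$), but you should say it.
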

\begin{proof}
We have to prove $\varprojlim_{a} \left(\frac{(p^a)+I}{(p^a)}\right) \subset \hat{A} \cdot I$, the other inclusion is clear.

Suppose $I$ is generated by $\{m_1,\cdots,m_l\} \subset I$.
Choose any compatible system \[ (\overline{m^{(a)}}) \in \varprojlim_{a} \left(\frac{(p^a)+I}{(p^a)}\right), ~ m^{(a)} \in I. \] 
We use induction to choose coefficients of $m_i$ in the expression of $\overline{m^{(a)}}$.

For $a = 1$, we pick coefficients $\{x^{(1)}_i \in A\}$ such that $\overline{m^{(1)}} =  \sum\limits_{i=1}^l \overline{x^{(1)}_i} \bar{m}_i$.
For a given $a \in \NN$, suppose we have constructed $\{x^{(a)}_i \in A\}$ and express \[ \overline{m^{(a)}} = \sum\limits_{i=1}^l \overline{x^{(a)}_i} \bar{m}_i \in \frac{(p^a)+I}{(p^a)}, \] we want to inductively construct $\{x^{(a+1)}_i \in A\}$.
We lift $\overline{m^{(a+1)}_i}$ to $\tilde{m} \in I$. 
Since $A / I$ is $p$-torsion free, there exists $\{d_i^{(a+1)} | 1 \leq i \leq l\}$ such that \[ \tilde{m}-(\sum\limits_{i=1}^l x^{(a)}_i m_i) = p^a \sum\limits_{i=1}^l d^{(a+1)}_i m_i.\] 
Let $\overline{x^{(a+1)}_i} := \overline{x^{(a)}_i} + p^a \overline{d^{(a+1)}_i} ~ \mathrm{mod} ~ p^{a+1}$ for $1 \leq i \leq l$.
The lift $\overline{m^{(a+1)}_i}$ can be expressed as \[ \overline{m^{(a+1)}_i} = \sum\limits_{i=1}^l \overline{x^{(a+1)}_i} \bar{m}_i \in \frac{(p^{a+1})+I}{(p^{a+1})}. \]
Therefore for each $1 \leq i \leq l$, $(\overline{x^{(a)}_i})$ defines an element in $\hat{A}$, $\hat{I} \subset \hat{A} \cdot I$.
Similarly $\hat{I} = I \cdot \hat{A}$.

Consider the short exact sequence of inverse systems:
\[ 0 \to \left(\frac{(p^a)+I}{(p^a)}\right) \to \left(\frac{A}{(p^a)}\right) \to \left(\frac{A}{(p^a)+I}\right) \to 0, \]
the system $\left(\frac{(p^a)+I}{(p^a)}\right)$ satisfies the Mittag-Leffler condition,
the inverse limits give a short exact sequence by \cite[02MY Lem 12.31.3]{Sta}.
$\hat{A} / \hat{I}$ is isomorphic to the $p$-adic completion of $A / I$.
\end{proof}

\section{Growth of algebraic quotients for (completed) enveloping algebras}\label{gro enve} 

Let $K$ be a field with a subring $R$ such that $2$ is invertible in $R$.
Let $\frak{g}_0$ be a direct sum of $R$-Lie algebras with $K$-valued extension $\frak{g}$ $$\frak{g}_0 := \bigoplus_{i=1}^r \frak{s}\frak{l}_{2,R}, ~~ \frak{g} := \bigoplus_{i=1}^r \frak{s}\frak{l}_{2,K} = K \otimes_R \frak{g}_0.$$
The universal enveloping algebra $U(\frak{g}_0) = \bigotimes_{i=1}^r U(\frak{s}\frak{l}_{2,R})$ (and similarly for $U(\frak{g})_{/ K}$) is a multi-filtered $R$-algebra with index group $\Lambda = \ZZ^r$.
To be precise, let $h_i, e_i, f_i$ be the basis of $\frak{s}\frak{l}_{2,R}$ with the relations \begin{equation}\label{sl2 rel} 
 [h_i, e_i]=2 e_i, \quad[h_i, f_i]=-2 f_i, \quad[e_i, f_i]=h_i . 
\end{equation}

Letting \[ \Delta_i := \frac{1}{2} h_i^2+ e_i f_i +  f_i e_i = \frac{1}{2} h_i^2-h_i+2e_i f_i\] be the Casimir operator for $i$-th component, we are interested in the polynomial ring $K[\Delta_1,\cdots,\Delta_r] \subset Z(U(\frak{g}))$. 

For $\lambda = (\lambda_1,\cdots,\lambda_r) \in R^r$, we define
 \[\Ann_Z(\lambda)^\circ := \sum\limits_{i=1}^r Z(U(\frak{g}_0)) \cdot (\Delta_i - \lambda_i), ~ \Ann_Z(\lambda) := K \cdot \Ann_Z(\lambda)^\circ \]
as ideals of $Z(U(\frak{g}_0))$ and $Z(U(\frak{g}))$, giving rise to the extension ideal and quotient ring of $U(\frak{g}_0)$ and $\Ug$ \[ U_Z(\lambda)^\circ := U(\frak{g}_0) \cdot \Ann_Z(\lambda)^\circ, ~ U_Z(\lambda) := \Ug \cdot \Ann_Z(\lambda), \]
\begin{eqnarray}\label{U lambda} U_\lambda^\circ := U(\frak{g}_0) / U_Z(\lambda)^\circ, ~ U_\lambda := \Ug / U_Z(\lambda). \end{eqnarray}

Let $p$ be an odd prime. If $K$ is a finite extension of $\QQ_p$ with ring of integers $R$, we define 
\[ \widehat{U(\frak{g}_0)} := \varprojlim_{a} \left( \frac{U(\frak{g}_0)}{p^a U(\frak{g}_0)} \right), ~~ \wUg := K \otimes_{R} \widehat{U(\frak{g}_0)}.\]

Similarly for $\widehat{U(\frak{g}_0)}$ and $\wUg$,
we define $\wU^\circ_\lambda$, $\wU_\lambda$ to be the quotient rings by nullifying the relations $\{ (\Delta_i - \lambda_i) | 1 \leq i \leq r\}$.

For $\delta \in \wUg$, we say that $\delta$ is \emph{generic} if
the image of $\delta$ via $\wUg \twoheadrightarrow \wU_\lambda$ is non-zero for all $\lambda \in \ZZ_p^r$. 
An equivalent torsion condition is given by compactness of $\ZZ_p$ via the following lemma.

\begin{lem}\label{generic compact}
If $\delta \in \widehat{U(\frak{g}_0)}$ is generic, there exists a natural number $n_\delta \geq 1$ such that the image of $\delta$ via $\widehat{U(\frak{g}_0)} \twoheadrightarrow \wU_\lambda^\circ / p^{n_\delta}$ is non-zero for all $\lambda \in \ZZ_p^r$.
\end{lem}
\begin{proof}
We define a function $f_\delta : \ZZ_p^r \to \ZZ_{\geq 1}$:
for each $\lambda \in \ZZ_p^r$,
$f_\delta(\lambda)$ is defined to be the minimal positive integer such that the image of $\delta$ via $\widehat{U(\frak{g}_0)} \twoheadrightarrow \wU_\lambda^\circ / p^{f_\delta(\lambda)}$ is non-zero.
We have \[ \sum\limits_{i=1}^r \langle \Delta_i-\lambda_i \rangle + \langle p^{f_\delta(\lambda)} \rangle = \sum\limits_{i=1}^r \langle \Delta_i-(\lambda_i + p^{f_\delta(\lambda)} x_i) \rangle + \langle p^{f_\delta(\lambda)} \rangle ~ \for ~ \forall x_i \in \ZZ_p, \]
\[ \text{therefore} ~ f_\delta(\lambda+x) \leq f_\delta(\lambda) ~ \for ~ \forall x \in p^{f_\delta(\lambda)} \ZZ_p^r. \]
Since $\ZZ_p^r$ is compact and $f_\delta$ is upper-semicontinuous,
$f_\delta$ is bounded above.
\end{proof}

For $\bk = (k_1,\cdots,k_r) \in \NN^r$, we use $W_{\bk}^\circ$ (resp. $W_\bk$) to denote the $U(\frak{g}_0)$-module $\boxtimes_{i=1}^{r} \Sym^{k_i} R^2$ (resp. $\Ug$-module $\boxtimes_{i=1}^{r} \Sym^{k_i} K^2$).
We use $U_\bk$, $\wUk^\circ$, $\wUk$ for $U_\lambda$, $\wU^\circ_\lambda$, $\wU_\lambda$ when $\lambda = (\frac{1}{2}k_1(k_1+2),\cdots,\frac{1}{2}k_r(k_r+2)) $.

The goal of this section is to prove the following statement.
\begin{thm}\label{upper bound}
If $\whM$ is cyclic, torsion and a given generator is annihilated by a generic element $\delta \in \wUg$,
there exists a multiplicity-free polynomial $p_{\whM}$ in $r$ variables $k_1,\cdots,k_r$ of degree at most $r-1$ associated to $\whM$ such that for all $\bk \in \NN^r$, we have
\[ H^0_{\whM}(\bk) \leq p_{\whM}(\bk). \]
\end{thm}

To prove such a result, we will prove the image of $\widehat{U(\frak{g}_0)} \cdot \delta$ in $\End_{\QQ_p}(W_\bk)$ modulo $p^{n_\delta}$ ($n_\delta$ is a positive integer given by Lem \ref{generic compact}) requires suitably many generators over $\ZZ_p$.
This motivates us to estimate the number of generators using the filtration on $U(\frak{g}_0)$, which we illustrate below.
Since the natural $\widehat{U(\frak{g}_0)}$ action on $W_\bk$ factors through $\wUk^\circ$, our observations that $\wUk^\circ / p$ is an integral domain in Lem \ref{Ug quot cent} and this image is generated by the first $\bk$-th filtered piece of $U(\frak{g}_0)$ (Prop \ref{sl2 end}, Rem \ref{torsion fil image}) will achieve the desired estimate. 

To prevent any confusion, we emphasize that our argument is mostly integral. But we also include corresponding rational statements for completeness. 

Going back to the Lie algebra $\frak{g}_0$, for each $\frak{s}\frak{l}_{2,R}$-component, by the PBW theorem there is a $\ZZ$-filtration with $\mathrm{Fil}_l$ generated by polynomials of $\{h_i,e_i,f_i\}$ up to degree $l$.
The $\ZZ^r$ filtration is supported on $\NN^r \subset \ZZ^r$, so sometimes we write $\NN^r$ instead of $\ZZ^r$.

We equip $U(\frak{g}_0)$ with the product filtration indexed by $\Lambda$. 
The abelian group $\Lambda = \ZZ^r$ comes with the partial order \[ \lambda \leq \mu ~ \mathrm{if} ~ \lambda_1 \leq \mu_1, \cdots, \lambda_r \leq \mu_r. \]
We write \begin{eqnarray}\label{multi-index} 0 \ll \lambda ~\mathrm{if}~ 0 \ll \min\limits_{1 \leq i \leq r} \lambda_i, ~\mathrm{and}~ \lambda \to \infty ~\mathrm{if}~ \min\limits_{1 \leq i \leq r} \lambda_i \to \infty. \end{eqnarray}
We have $F_\mu U(\frak{g}_0) = 0$ if $\mu \notin \NN^r \subset \ZZ^r$.
And $\rank_R ~ F_\lambda U(\frak{g}_0) < \infty$ for all $\lambda \in \Lambda$, and there exists a polynomial $p_{U(\frak{g}_0)}$ in $r$ variables such that 
\[ \rank_R ~ F_\lambda U(\frak{g}_0) = p_{U(\frak{g}_0)}(\lambda) ~ \for ~ 0 \ll \lambda \in \NN^r. \]
But to form a graded ring, we use the $\ZZ$-filtration and its associated graded ring is \[ S_R := \gr(U(\frak{g}_0)) \simeq R[h_1,e_1,f_1,\cdots,h_r,e_r,f_r]. \]

Let $M$ be a finitely generated $U(\frak{g})$-module (resp. $U(\frak{g}_0)$-module), with a set of generators $\{m_1,\cdots,m_l\}$. 
We define two filtrations on $M$ valued in $\Lambda = \ZZ^r$ or $\Lambda = \ZZ$, both given by the formula (and similarly for $U(\frak{g}_0)$-modules) \[F_\lambda M := \sum\limits_{i=1}^l F_\lambda \Ug \cdot m_i, ~ \lambda \in \Lambda.\] 
If $\Lambda = \ZZ$, we have the associated graded module $\gr(M)$ over $S = \gr(U(\frak{g}))$.
Let $I_Z$ be the ideal of $S$ generated by $\{\frac{1}{2} h_i^2 + 2e_if_i ~ | ~ 1 \leq i \leq r\}$.

We prove some useful properties of the rings $U_\lambda$,$\wU^\circ_\lambda$ and $\wU_\lambda$:
\begin{lem}\label{Ug quot cent}
For $\lambda \in \ZZ_p^r$, $\frak{r} \in R$,
\begin{itemize}
\item Let $h,e,f$ be the basis of $\frak{s}\frak{l}_{2,R}$ with $\Delta=\frac{1}{2}h^2-h+2ef$ and the same commutation relations as (\ref{sl2 rel}), then $\{e^a f^b h^c | a, b \in \NN, c \in \{0,1\} \}$ is a basis of $U(\frak{s}\frak{l}_{2,R}) / (\Delta-\frak{r})$.
\item $F_\bullet U(\frak{s}\frak{l}_{2,R})$ induces a $\ZZ$-filtration on $U(\frak{s}\frak{l}_{2,R}) / (\Delta-\frak{r})$. 
Therefore both $\ZZ^r$ and $\ZZ$ filtrations induce corresponding filtrations on $U_\lambda^\circ$ and $U_\lambda$, and for the $\ZZ^r$ filtration, \[\rank_R ~ F_d U_\lambda^\circ = \dim_K F_d U_\lambda = \prod\limits_{i=1}^r (d_i + 1)^2 \] is a polynomial in $d = (d_1,\cdots,d_r) \in \NN^r$. 
\item The Noetherian ring $U_\lambda$ is an integral domain.

If $K$ is moreover a finite extension of $\QQ_p$ for $p \geq 3$:
\item The ring $\wU_\lambda^\circ$ is isomorphic to the $p$-adic completion of $U_\lambda^\circ$, and 
\[ K \otimes_R U_\lambda^\circ \simeq U_\lambda, ~~ K \otimes_R \wU_\lambda^\circ \simeq \wU_\lambda.\]
\item The rings $\wU_\lambda^\circ$, $\wU_\lambda$ are also Noetherian integral domains.
\end{itemize}
\end{lem}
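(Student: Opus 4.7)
The plan is to dispatch the five bullets sequentially, with most of the work reducing to PBW / associated-graded computations plus applications of Lemmas \ref{grade ring quotient generator} and \ref{ext comp}.

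For bullets (i) and (ii), I would work in a single copy of $U(\frak{s}\frak{l}_{2,R})$ with its standard $\ZZ$-filtration, whose associated graded is $R[h,e,f]$. The symbol of $\Delta - \frak{r} = \tfrac{1}{2}h^2 - h + 2ef - \frak{r}$ is $\tfrac{1}{2}h^2 + 2ef$, which is nonzero (since $2 \in R^\times$) and a nonzero divisor in the polynomial domain $R[h,e,f]$. Lemma \ref{grade ring quotient generator} therefore yields $\gr(U(\frak{s}\frak{l}_{2,R})/(\Delta-\frak{r})) \simeq R[h,e,f]/(\tfrac{1}{2}h^2+2ef)$, with free $R$-basis $\{e^a f^b h^c : a,b \in \NN,\ c \in \{0,1\}\}$; lifting through the filtration gives the PBW basis of bullet (i). Tensoring over $i = 1,\ldots,r$ (and base-changing to $K$ for $U_\lambda$) produces the bases and induced $\ZZ^r$- and $\ZZ$-filtrations on $U_\lambda^\circ$ and $U_\lambda$; counting monomials of per-component PBW degree $\leq d_i$ gives $\binom{d_i+2}{2} + \binom{d_i+1}{2} = (d_i+1)^2$, proving bullet (ii).

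For bullet (iii), Noetherianness is automatic as $U_\lambda$ is a quotient of $\Ug$. For integrality I would show that $\gr(U_\lambda) \simeq K[h_i,e_i,f_i]_{i=1}^r / (\tfrac{1}{2} h_i^2 + 2 e_i f_i)_{i=1}^r$ is itself a domain, whence $U_\lambda$ is integral by the standard filtered-to-graded argument. Each factor $K[h,e,f]/(h^2+4ef)$ is integral: viewed as a quadratic in $h$, the relation is irreducible over $\overline{K}(e,f)$ because $-4ef$ is not a square ($e, f$ are distinct primes in $\overline{K}[e,f]$). The same argument holds over $\overline{K}$, so each factor is geometrically integral, and a tensor product over $K$ of geometrically integral $K$-algebras is integral.

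Bullet (iv) is an application of Lemma \ref{ext comp} with $A = U(\frak{g}_0)$ and $I = U_Z(\lambda)^\circ$: by bullet (i), $A/I = U_\lambda^\circ$ is a free (hence $p$-torsion free) $R$-module, so the lemma identifies $\wU_\lambda^\circ$ with the $p$-adic completion of $U_\lambda^\circ$; the two tensor-product identities follow by applying $K \otimes_R(-)$ to the defining presentations. For bullet (v), Noetherianness comes from the (standard) Noetherianness of $\widehat{U(\frak{g}_0)}$ and $\wUg$. For integrality I would reduce modulo $p$: by bullet (iv), $\wU_\lambda^\circ / p \simeq U_\lambda^\circ / p U_\lambda^\circ$, and repeating the argument of bullet (iii) over $\FF_p$ (valid since $p$ is odd) shows this is a domain. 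As the $p$-adic completion of a free $R$-module, $\wU_\lambda^\circ$ is $p$-torsion free; if $xy = 0$ with $x, y \neq 0$, writing $x = p^a x'$, $y = p^b y'$ with $x', y' \notin p \wU_\lambda^\circ$ forces $x' y' \equiv 0 \pmod{p}$, contradicting integrality of the reduction. Inverting $p$ gives $\wU_\lambda$ integral. The main obstacle I anticipate is the geometric-integrality check in bullet (iii); everything else is systematic filtered-algebra bookkeeping and direct use of the cited lemmas.
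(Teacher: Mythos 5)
Your proof is correct and takes essentially the same approach as the paper's: pass to the associated graded via Lemma \ref{grade ring quotient generator}, observe that the quadric $\tfrac{1}{2}h^2+2ef$ cuts out a domain over $\overline{K}$ (and over $\FF_p$), and use Lemma \ref{ext comp} plus a $p$-torsion-free lifting argument for the completed statements. The paper phrases bullet (i) as a direct induction on total degree rather than routing through $\gr$, and leaves the reduction-mod-$p$ lifting step in bullet (v) implicit, but these are the same underlying arguments with different amounts of detail spelled out.
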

\begin{proof}
The first part follows from the commutation relations and an induction on the total degree.

The second part follows from the first part.

The $\overline{K}$-algebra $\overline{K}[h,e,f]/(\frac{1}{2}h^2+2ef)$ is an integral domain over the algebraic closure $\overline{K}$ of $K$. And so is the tensor product of $r$-copies of $\overline{K}[h,e,f]/(\frac{1}{2}h^2+2ef)$.
Hence $S_K / I_Z$ is also an integral domain.
For the last part, consider the $\ZZ$-filtration on $U_\lambda$. 
Its graded ring $\gr(U_\lambda)$ is isomorphic to the integral domain $S_K / I_Z$ by Lem \ref{grade ring quotient generator}.
For any non-zero $a, b \in U_\lambda$, let $i_a$, $i_b$ be the minimal natural numbers such that $a \in F_{i_a} U_\lambda, b \in F_{i_b} U_\lambda$.
The image of $ab$ in \[F_{i_a+i_b} U_\lambda / F_{i_a+i_b-1} U_\lambda\] is non-zero, so is $ab \in U_\lambda$.

The fourth part follows from Lem \ref{ext comp} and the flat base change $R \hookrightarrow K$.

For the last part, we may assume $K = \QQ_p$.
We have $\wU_\lambda^\circ / p \simeq U_\lambda^\circ / p$ is isomorphic to $U_\lambda$ over $\FF_p$ (defined in (\ref{U lambda}) for $K = \FF_p$), the claim then follows from the third part for $\FF_p$.
\end{proof}

For the rest of the section, $K$ is a finite extension of $\QQ_p$ with the ring of integers $R$. 
Let $\Ann(\bk)$ (resp. $\widehat{\Ann(\bk)}$) be the annihilator ideal of $\Ug$ (resp. $\wUg$) for $W_{\bk}$.
As will be discussed in \S \ref{alg quo}, there is an algebraic isomorphism (\ref{Ann iso})
\[ \Ug / \Ann(\bk) \xrightarrow{\sim} \wUg / \widehat{\Ann(\bk)} \xrightarrow{\sim} \End_K (W_{\bk}). \]

If $\whM$ is a finitely generated $\wUg$-module, we define \[ \whM_\bk := \whM / \widehat{\Ann(\bk)} \cdot \whM \] and the $W_{\bk}$-\emph{multiplicity} $H^0_{\whM}(\bk)$ to be
\begin{eqnarray}\label{Wk mul} H^0_{\whM}(\bk) := \dim_K \Hom_{\wUg}(\whM, W_{\bk}) = \frac{\dim_K \whM_\bk}{\dim_K W_\bk}. \end{eqnarray}

Note that the natural $\Ug$, $\wUg$ actions on $W_\bk$ factor through $U_\bk$, $\wUk$.

\begin{prop}\label{sl2 end}
For any $\bk \in \NN^r$, as $R$-modules, the image of $F_\bk U_\bk^\circ$ equals to the full image of $U_\bk^\circ$ in $\End_R(W_\bk^\circ)$.
As a corollary, there are isomorphisms of $K$-vector spaces: 
\[ F_\bk U_\bk \xrightarrow{\sim} \Ug/\Ann(\bk) \xrightarrow{\sim} \End_K(W_\bk). \]
\end{prop}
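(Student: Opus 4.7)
The plan is to first reduce to the case $r=1$ via the tensor-product decomposition $U(\frak{g}_0)=\bigotimes_i U(\frak{sl}_{2,R})$, $W_\bk^\circ=\boxtimes_i W_{k_i}^\circ$, $\End_R(W_\bk^\circ)=\bigotimes_i\End_R(W_{k_i}^\circ)$, noting that the multi-filtration $F_\bk U(\frak{g}_0)^\circ$ is the tensor product of the individual PBW filtrations and that equality of images is stable under tensor products.

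For $r=1$, I will combine the rank formula from Lem~\ref{Ug quot cent} with an adjoint-equivariance argument. Over $K$, decompose $U_k$ under the $\ad(\frak{sl}_{2,K})$-action as $\bigoplus_{j\geq 0}M_j$ with $M_j\cong V_{2j}$ the $(2j+1)$-dimensional irreducible; its highest-weight representative is the class of $e^j\in F_j U_k$, and the orbit under $\ad(f)$ stays in $F_j U_k$ because $[f,F_j]\subseteq F_j$. The rank equality $\dim_K F_k U_k=(k+1)^2=\sum_{j\leq k}(2j+1)$ then forces $F_k U_k=\bigoplus_{j\leq k}M_j$. Clebsch-Gordan gives $\End_K(W_k)=\bigoplus_{j=0}^k V_{2j}$, and Schur's lemma on each $M_j$, together with the non-vanishing $\pi_K(e^j)\neq 0$ for $j\leq k$ (since $e^j v_k=\frac{k!}{(k-j)!}v_{k-j}\neq 0$), forces $\pi_K|_{M_j}:M_j\xrightarrow{\sim}V_{2j}$ for $j\leq k$ and $\pi_K|_{M_j}=0$ for $j>k$. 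This yields the $K$-corollary $F_\bk U_\bk\xrightarrow{\sim}\End_K(W_\bk)$ once tensored back, while Burnside's density theorem furnishes the second isomorphism $U(\frak{g})/\Ann(\bk)\xrightarrow{\sim}\End_K(W_\bk)$.

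For the integral statement, I will show inductively that $\pi(F_d U_k^\circ)=\pi(F_{d-1}U_k^\circ)$ for every $d>k$. The crucial observation is that $e^d$ annihilates $W_k^\circ$ whenever $d>k$: $e^d v_i=i(i-1)\cdots(i-d+1)v_{i-d}=0$ for every monomial basis vector with $i\leq k<d$, so $e^d\in\Ann_R$. Since $\Ann_R$ is a two-sided ideal of $U_k^\circ$, the entire $\ad(U(\frak{sl}_{2,R})^\circ)$-orbit of $e^d$ lies in $\Ann_R$, yielding a family of integral relations $\ad(f)^i(e^d)\in\Ann_R$; together with the symmetric family from $f^d\in\Ann_R$ and the Casimir reduction $h^2=2h+2\lambda_k-4ef$ (available over $R$ since $2\in R^\times$), these express each new PBW basis element $e^a f^b h^c$ of degree $d$ with $a,b\leq k$ modulo $\Ann_R$ as an $R$-linear combination of basis elements of degree $\leq d-1$. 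Iterating from $d=k+1$ upward gives $\pi(U_k^\circ)=\bigcup_d\pi(F_d U_k^\circ)=\pi(F_k U_k^\circ)$, which is the claim.

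The main obstacle is verifying that this PBW reduction runs through with $R$-coefficients only. In worked cases the required denominators involve only $1/2$ — for instance $e^k h\equiv -k\cdot e^k\pmod{\Ann_R}$ directly from $[f,e^{k+1}]\in\Ann_R$, and $e^k f\equiv \tfrac{k}{2}(k+h)e^{k-1}\pmod{\Ann_R}$ — supporting the expectation that $2\in R^\times$ suffices; over $R$ the integral $\ad$-orbit of $e^d$ need not span the entire graded piece $\gr^d U_k^\circ$ (this can fail at primes appearing in $d!$), so one cannot simply appeal to a Kostant-style separation-of-variables decomposition and must instead verify case-by-case via explicit PBW commutators, using that each commutator contributes only $\pm 1,\pm 2$, or degree-like integer cofactors that are compatible with $R$-integrality once $2^{-1}$ is available.
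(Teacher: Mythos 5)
Your proof of the $K$-linear corollary via the $\ad$-decomposition $U_k\cong\bigoplus_{j\geq 0}V_{2j}$, the Clebsch--Gordan decomposition of $\End_K(W_k)$, Schur's lemma and the rank formula from Lemma~\ref{Ug quot cent} is correct, and it is a genuinely different, more representation-theoretic, argument than the paper's. However, the first, integral, assertion of the proposition is the one that actually carries weight later (Remark~\ref{torsion fil image} and the proof of Theorem~\ref{upper bound} need the statement modulo $p^m$), and there your argument has a gap which you flag but do not close. Two concrete issues. First, the sample derivation you cite is not quite right as stated: one computes $[f,e^{k+1}]=-(k+1)\bigl(e^kh+k\,e^k\bigr)$, so $[f,e^{k+1}]\in\Ann_R$ a priori only gives $(k+1)(e^kh+ke^k)\in\Ann_R$; one recovers $e^kh+ke^k\in\Ann_R$ only after observing that $\Ann_R$ is saturated in $U_k^\circ$ (because $U_k^\circ/\Ann_R$ embeds in the $R$-free module $\End_R(W_k^\circ)$), a step you do not mention. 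Second, and more seriously, the integers $d,d-1,\dots$ produced by iterating $\ad(f)$ on $e^d$ can be divisible by $p$, so the integral $\ad$-orbits of $e^d$ and $f^d$ together with the Casimir relation need not span $\gr^d U_k^\circ$ modulo lower order; saturation of $\Ann_R$ does not fix this by itself, since it only forces the quotient of $F_d U_k^\circ$ by the sum of the orbit lattice with $F_{d-1}U_k^\circ$ to be torsion, not that it vanishes. You acknowledge this and defer to a ``case-by-case'' PBW verification that is not carried out, so the integral statement is not actually proved.

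The paper's proof of the integral assertion avoids all of this and is more elementary. It first shows, by a short induction using the Casimir relation with denominators only powers of $2$, that $e^af^a$ equals an $R$-coefficient polynomial in $h$ inside $U_k^\circ$. Then, for a monomial $e^af^b$ with $a\le b$ and $a+b>k$, the image of $f^{b-a}$ lands in the $R$-free submodule $W_{\geq b-a}\subset W_k^\circ$ of rank $k-b+a+1\le 2a$, so the Cayley--Hamilton theorem for $h$ restricted to $W_{\geq b-a}$ (whose characteristic polynomial is monic with $R$-coefficients) rewrites the action of $e^af^a$ on $W_{\geq b-a}$ as an $R$-polynomial in $h$ of degree $\le k-b+a$; composing with $f^{b-a}$ exhibits the image of $e^af^b$ in $\End_R(W_k^\circ)$ as that of an element of $F_kU_k^\circ$ in a single step, with no integrality subtleties. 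If you want to keep your route, you would need to actually perform the integral reduction, and I suspect it would not come out shorter.
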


\begin{proof}
It suffices to prove the case $r = 1$, $\bk = (k)$.
We first observe that for any $a \geq 0$, $e^a f^a$ can be expressed as a $R$-linear combination of $h^i$ as an operator in $U_\bk^\circ$.
For example, let $\lambda_k = \frac{1}{4} k(k+2)$, we have
\begin{eqnarray*}
e^2f^2 & = & e (\lambda_k+\frac{1}{2}h-\frac{1}{4}h^2) f \\
& = & \lambda_k \cdot ef + \frac{1}{2} (he-2e)f - \frac{1}{4} (he-2e)(fh-2f) \\
& = & (\lambda_k + h - 2)ef + \frac{1}{2} efh - \frac{1}{4} hefh \\
& = & (\lambda_k - 2 + \frac{3}{2} h - \frac{1}{4} h^2) (\lambda_k+\frac{1}{2}h-\frac{1}{4}h^2) 
\end{eqnarray*} 

Let $v_0,\cdots,v_k$ be the weight vectors of $W_k^\circ$ of increasing weights, with $W_\bk^\circ = \oplus_{i=0}^k R v_i$, $f$ raising a weight and $e$ lowering a weight.
Let $W_{\geq i} := \sum_{j \geq i}^k R v_j$.
Note that we want to prove each monomial $e^a f^b h^c$ ($c$ is either zero or one by the first part of Lem \ref{Ug quot cent}) with total degree strictly larger than $k$ can be reduced to a $R$-linear combination of lower degree terms.
For explanation, we only show the reduction for the monomial $e^a f^b$, where $a+b > k$, $a \leq b$, the monomial $e^a f^b = e^a f^a f^{b-a}$ (other cases can be similarly obtained). 
The operator $e^a f^a$ can be viewed as an endomorphism of $W_{\geq b-a}$ since $f^{b-a}$ maps $W$ to $W_{\geq b-a}$. 
We know it is a $R$-linear combination of powers of $h$, by Cayley-Hamilton,
\[ e^a f^a = \sum_{i=0}^{k-b+a} c_i h^i \in \End_R (W_{\geq b-a}), ~ c_i \in R. \]
As $k-b+a < 2a$, the degree of $e^a f^b$ is reduced.
We can similarly argue the other cases.

The corollary for $K$-isomorphisms follows from counting the dimensions of both sides by Lem \ref{Ug quot cent}.
\end{proof}

\begin{rem}\label{torsion fil image}
Let $E^\circ_\bk$ be the image of $U^\circ_\bk$ in $\End_R(W_\bk^\circ)$.
As it is $p$-adic complete, it coincides with the image of $\widehat{U(\frak{g}_0)}$ as well. 
Moreover we have \[ F_\bk( U^\circ_\bk / p^m ) \simeq E^\circ_\bk / p^m \] for all $m \geq 0$.
\end{rem}

\begin{proof}[Proof of Theorem \ref{upper bound}]
Let $\whM_0$ be a cyclic $\widehat{U(\frak{g}_0)}$-lattice inside $\whM$ such that $\whM_0 \otimes_R K \xrightarrow{\sim} \whM$.
And we may assume $\delta \in \widehat{U(\frak{g}_0)}$.
The surjection $\widehat{U(\frak{g}_0)} \twoheadrightarrow \whM_0$ corresponding to the generator factors through $\widehat{U(\frak{g}_0)} / \widehat{U(\frak{g}_0)} \delta$.

We pick $n_\delta \geq 1$ satisfying Lem \ref{generic compact}.
Under the natural identification $U(\frak{g}_0) / p^{n_\delta} \simeq \widehat{U(\frak{g}_0)} / p^{n_\delta}$,
there exists $\alpha \in \NN^r$ such that $\delta \in F_\alpha (\widehat{U(\frak{g}_0)} / p^{n_\delta}) \simeq F_\alpha (U(\frak{g}_0) / p^{n_\delta})$.

If $\bk \geq \alpha$, \[ F_{\bk-\alpha}(\wU_\bk^\circ / p^{n_\delta}) \cdot \delta \subset F_{\bk}(\wU_\bk^\circ / p^{n_\delta}). \]
Let $\varpi$ be a uniformizer of $R$.
For any $\bk \in \NN^r$, and $e \in U_\bk^\circ$ such that $e \notin \varpi U_\bk^\circ$, we have $e \cdot \delta \neq 0$ in $U_\bk^\circ / p^{n_\delta}$ since $U_\bk^\circ / \varpi$ is an integral domain by Lem \ref{Ug quot cent}.
The composition of maps of vector spaces
\[ F_{\bk-\alpha} U_\bk \to F_{\bk-\alpha} U_\bk \cdot \delta \to \End_K(W_\bk) \] is injective; otherwise, there exists $e \in F_{\bk-\alpha} U_\bk^\circ \backslash \varpi U_\bk^\circ$ such that $e \cdot \delta$ maps to \[0 \in E_\bk^\circ \hookrightarrow \End_K(W_\bk),\]
contradicting Rem \ref{torsion fil image} of Prop \ref{sl2 end} since $e \cdot \delta \in F_\bk( U^\circ_\bk / p^{n_\delta} )$ is non-zero modulo $p^{n_\delta}$.

Therefore the image of $\wUg \delta$ in $\wUg_\bk \simeq \End_K(W_\bk)$ has dimension at least $\dim_K F_{\bk-\alpha} U_\bk$, and we have the following bound for $(\wUg/\wUg \delta)_\bk$
\[ H^0_{\wUg/\wUg \delta}(\bk) = \frac{\dim_K (\wUg/\wUg \delta)_\bk}{\dim_K W_\bk} \leq \frac{\prod_{i=1}^r (k_i+1)^2-\prod_{i=1}^r (k_i-\alpha_i+1)^2}{\prod_{i=1}^r (k_i+1)}. \]
Regarding each $(k_i+1)$ as a variable, each term $c_{S, S'} \cdot \frac{\prod_{i \in S} (k_i+1)}{\prod_{i \in S'} (k_i+1)}$ is bounded by $|c_{S, S'}| \cdot \prod_{i \in S} (k_i+1)$ for $S \sqcup S' \subset \{1,\cdots,r\}$.
We get the desired bound for $\wUg/\wUg \delta$.
\end{proof}

\begin{rem}
If $M$ is a finitely generated module over $\Ug$, in general, the rank of $M$ does not have to agree with the rank of $U_\bk \otimes_{\Ug} M$ over $U_\bk$ without the genericity condition.
If $r=2, \frak{g} = \frak{s}\frak{l}_{2,K} \oplus \frak{s}\frak{l}_{2,K}$, and $\Delta_1, \Delta_2$ are Casimir operators for the two components,
the algebraic representations $W_{k,k}$ of parallel weights grow in quadratic order in the cyclic torsion $\Ug$-module $\Ug / (\Delta_1-\Delta_2)$.

These Casimir operators do not exist in $K[[G]]$, and we will see elements in $\wUg$ obtained from base change over the microlocalisation (\ref{micro}) are generic. 
\end{rem}

\section{Comparison of algebraic quotients}\label{alg quo} 
Let $K$ be a finite extension of $\QQ_p$ with the ring of integers $R$, residue field $k$.
Let $G$ be a uniform pro-$p$ group of dimension $d = \dim G$. 
We define the completed group rings by \[ R[[G]] := \lim _{\longleftarrow} R[G / N], ~ K[[G]] := K \otimes_R R[[G]], \]
where $N$ runs over all the open normal subgroups $N$ of $G$.

Lazard \cite{Laz65} defines a $\ZZ_p$-Lie algebra $L_G$ associated to $G$ (see also \cite[\S 4.5]{DDSMS99}).
We briefly recall some basic facts about $L_G$ here.
We fix a minimal topological generating set $\{g_1,\cdots,g_d\}$ of $G$. Each element of $G$ can be written uniquely in the form $g_1^{\lambda_1}\cdots g_d^{\lambda_d}$ for some $\lambda_1,\cdots,\lambda_d \in \ZZ_p$.
By \cite[Thm 4.30]{DDSMS99}, the operations
\begin{eqnarray}
\label{dot lie action} \lambda \cdot x &=& x^{\lambda} \\
x+y &=& \lim_{i \to \infty}\left(x^{p^{i}} y^{p^{i}}\right)^{p^{-i}} \\
\mathrm{[}x, y] &=& \lim_{i \to \infty}\left(x^{-p^{i}} y^{-p^{i}} x^{p^{i}} y^{p^{i}}\right)^{p^{-2 i}} 
\end{eqnarray}
define a Lie algebra structure $L_G$ on $G$ over $\ZZ_p$.
$L_G$ is a \emph{powerful Lie algebra} in the sense that it is free of rank $d = \dim G$ over
$\ZZ_p$ and satisfies $[L_G, L_G] \leq pL_G$.
Letting \[ \frak{g}_R = \frac{1}{p} L_G \otimes_{\ZZ_p} R, ~ \frak{g}_K = \frak{g}_R \otimes_{R} K, \] the \emph{completed universal enveloping algebras} $\widehat{U(\frak{g}_R)}$, $\widehat{U(\frak{g}_K)}$ are defined to be
\[ \widehat{U(\frak{g}_R)} :=\varprojlim_{a} \left(\frac{U(\mathfrak{g}_R)}{p^{a} U(\mathfrak{g}_R)}\right), ~ \widehat{U(\frak{g}_K)} := \widehat{U(\frak{g}_R)}\otimes_{R} K \]
following \cite{ST03} (appearing as the `largest' distribution algebra $D_{1/p}(G, K)$) and \cite{AW13}.

\begin{lem}
If $G$ is a compact open uniform pro-$p$ subgroup of $\SL_n(\QQ_p)$, then the associated Lie algebra \[ \frak{g}_K \simeq \frak{s}\frak{l}_{n,K} \] is isomorphic to the Lie algebra of $\frak{s}\frak{l}_n$ over $K$.
\end{lem}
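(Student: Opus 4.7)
The plan is to realize $L_G$ explicitly as a lattice in $\frak{s}\frak{l}_n(\QQ_p)$ via the matrix logarithm. Since $G$ is a uniform pro-$p$ open subgroup of $\SL_n(\QQ_p)$ and $p$ is odd (the case $p=2$ requires only mild adjustments), after conjugation in $\GL_n(\QQ_p)$ we may assume $G \subset I_n + p M_n(\ZZ_p)$. On this range the matrix logarithm $\log(g) := -\sum_{k \geq 1} (I_n-g)^k/k$ converges and gives a $p$-adic analytic homeomorphism $\log : G \xrightarrow{\sim} \log(G) \subset p M_n(\ZZ_p)$, inverse to the matrix exponential.

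Next, I verify that $\log$ intertwines Lazard's group-theoretic Lie operations (\ref{dot lie action}) on $G$ with the matrix operations on $\log(G)$. The identity $\log(g^\lambda) = \lambda \log(g)$ for $\lambda \in \ZZ_p$ follows by continuity from the integer case. For addition and bracket, one invokes the Baker--Campbell--Hausdorff formula, which for $X, Y \in p^{i+1} M_n(\ZZ_p)$ yields
\[ \log(\exp X \cdot \exp Y) \equiv X + Y \pmod{p^{2(i+1)} M_n(\ZZ_p)}, \]
\[ \log(\exp(-X) \exp(-Y) \exp X \exp Y) \equiv [X,Y] \pmod{p^{3(i+1)} M_n(\ZZ_p)}. \]
Substituting $X = p^i \log x$ and $Y = p^i \log y$ (so $\exp X = x^{p^i}$, $\exp Y = y^{p^i}$) and then multiplying the outputs by $p^{-i}$, respectively $p^{-2i}$, in logarithmic coordinates, the higher-order BCH corrections die in the limit $i \to \infty$. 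Hence $\log$ takes the Lazard sum $\lim_i (x^{p^i} y^{p^i})^{p^{-i}}$ to the matrix sum $\log x + \log y$, and the Lazard bracket to the matrix commutator $[\log x, \log y]$.

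Therefore $\log : L_G \xrightarrow{\sim} \log(G)$ is an isomorphism of $\ZZ_p$-Lie algebras onto a lattice in $M_n(\QQ_p)$. Since every $g \in G$ satisfies $\tr(\log g) = \log(\det g) = 0$, the image lies in $\frak{s}\frak{l}_n(\QQ_p)$; and since $G$ is open in $\SL_n(\QQ_p)$ (which has $\QQ_p$-dimension $n^2-1$), the lattice $\log(G)$ has $\ZZ_p$-rank $n^2-1$ and therefore spans $\frak{s}\frak{l}_n(\QQ_p)$ over $\QQ_p$. Consequently $\frak{g}_K = \tfrac{1}{p} L_G \otimes_{\ZZ_p} K \simeq \tfrac{1}{p}\log(G) \otimes_{\ZZ_p} K = \frak{s}\frak{l}_n(K) = \frak{s}\frak{l}_{n,K}$. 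The main technical point is the BCH translation of Lazard's limit definitions: the $p^{-i}$ and $p^{-2i}$ scalings built into those definitions are precisely what kill the nonlinear BCH corrections in the limit and recover the classical matrix Lie algebra structure on the logarithmic image.
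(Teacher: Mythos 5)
Your argument is correct and supplies exactly the kind of details that the exercise cited in \cite{DDSMS99} is asking for; the paper itself gives no argument beyond the citation. Realising $L_G$ inside $M_n(\QQ_p)$ via the matrix logarithm and checking with Baker--Campbell--Hausdorff that the $p^{-i}$ and $p^{-2i}$ rescalings in Lazard's formulas kill the higher-order corrections in the limit is the standard route, and your trace-zero observation plus the rank count at the end correctly identify the image as a full lattice in $\frak{s}\frak{l}_n(\QQ_p)$.

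The one step that deserves more than an assertion is the opening reduction ``after conjugation in $\GL_n(\QQ_p)$ we may assume $G \subset I_n + p M_n(\ZZ_p)$.'' Compactness conjugates $G$ into $\SL_n(\ZZ_p)$, but that a \emph{uniform} subgroup of $\SL_n(\ZZ_p)$ (for $p$ odd) must then land in the first principal congruence subgroup is a genuine claim and should be cited or argued. The cheapest patch is to replace $G$ by $G^{p^m}$ for $m$ large: $G^{p^m}$ is again uniform and open in $\SL_n(\QQ_p)$; its Lazard Lie algebra is $L_{G^{p^m}} = p^m L_G$, so $\frak{g}_K = \tfrac{1}{p}L_{G^{p^m}} \otimes_{\ZZ_p} K = L_G \otimes_{\ZZ_p} K$ is unchanged; and since $\{G^{p^m}\}_m$ is a neighbourhood base of the identity, $G^{p^m} \subset I_n + p M_n(\ZZ_p)$ once $m$ is large enough. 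With that adjustment your proof is complete and matches the intent of the reference.
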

\begin{proof}
This is an exercise \cite[Part II, Chapter 9, Ex 9]{DDSMS99} following from Lazard's paper \cite{Laz65}.
\end{proof}

As $G$ is a uniform pro-$p$-group, it is compact locally $\QQ_p$-analytic by \cite[Thm 8.18]{DDSMS99}. 
$G$ moreover satisfies the assumption (HYP) of \cite[\S 4]{ST03} by the remark before \cite[Lem 4.4]{ST03}.
Schneider and Teitelbaum have introduced the $K$-Fr\'echet-Stein algebra $D(G, K)$ of $K$-valued locally analytic distributions on $G$ (\cite{ST02J}, \cite{ST03}).
We briefly recall some basic properties of $D(G, K)$ from \cite{ST03} here.

Let $b_i := g_i - 1 \in R[G]$, and write \begin{eqnarray}\label{balpha} \mathbf{b}^{\alpha}=b_{1}^{\alpha_{1}} \cdots b_{d}^{\alpha_{d}} \in R[G] \end{eqnarray} for any $d$-tuple $\alpha \in \NN^d$.
We write $|\alpha| := \sum_{i=1}^d \alpha_i$.
It follows from the proof of \cite[Thm 7.20]{DDSMS99} that $R[[G]]$ can be naturally identified with the set of non-commutative formal power series in $b_1,\cdots, b_d$ with coefficients in $R$:
\[ R [[G]] =\left\{\sum_{\alpha \in \mathbb{N}^{d}} \lambda_{\alpha} \mathbf{b}^{\alpha} \mid \lambda_{\alpha} \in R\right\}. \]

There is a faithfully flat natural map from the Iwasawa algebra to the distribution algebra \begin{eqnarray}\label{ST dis} K[[G]] \to D(G, K) \end{eqnarray} by \cite[Thm 4.11]{ST03}, such that $D(G, K)$ can be identified with power series in $b_1,\cdots, b_d$ with convergence conditions
\[ D(G, K) = \left\{\sum_{\alpha \in \mathbb{N}^{d}} \lambda_{\alpha} \mathbf{b}^{\alpha} \mid \lambda_\alpha \in K, ~ \mathrm{and} ~ \for ~ \forall ~ 0 < r < 1, ~ \sup_{\alpha \in \mathbb{N}^{d}} |\lambda_\alpha| r^{|\alpha|} < \infty \right\}. \]

For $G$, there is an integrally valued \emph{$p$-valuation}
$\omega : G \backslash \{ 1 \} \to \ZZ_{\geq 1}$ such that
\begin{eqnarray*}  \omega(g h^{-1}) & \geq & \min (\omega(g), \omega(h)) \\ \omega (g^{-1} h^{-1} g h) & \geq & \omega(g)+\omega(h), \text{and} \\  \omega (g^{p}) & = & \omega(g)+1 \end{eqnarray*}
for any $g, h \in G$, with $\omega(1) := \infty$ \cite[III 2.1.2]{Laz65}.

For $G$ being uniform, we may define $\omega(g)$ to be $n \geq 1$ such that $g \in G^{p^{n-1}} \backslash G^{p^n}$.
It is indeed an integrally valued $p$-valuation on $G$ by for example \cite[Lem 10.2]{AW13}.
By the discussion in \cite[\S 4.2]{DDSMS99}, $\omega$ is characterized by 
\begin{eqnarray}\label{all val one} \omega(g_i) = 1 ~ \for ~ 1 \leq i \leq d, ~ \text{and} \end{eqnarray}
\[ \omega(g) = 1+\min_{1 \leq i \leq d} \omega_p(x_i), ~ \for ~ \forall g = g_1^{x_1}\cdots g_d^{x_d} \in G, \] where $\omega_p$ denotes the $p$-adic valuation on $\ZZ_p$.

The Fr\'echet topology of $D(G, K)$ is defined by the family of norms
\[ \|\lambda\|_{r}:=\sup _{\alpha \in \mathbb{N}^{d}}\left|\lambda_{\alpha}\right| r^{|\alpha|} \] for $0 < r < 1$, where the absolute value $|\cdot|$ is normalized as usual by $|p| = p^{-1}$. We let
\[ D_r(G, K) := \text{completion of $D(G, K)$ with respect to the norm} ~ \| ~ \|_r. \]
As a $K$-Banach space,
\begin{eqnarray}\label{Dr series}
 D_r(G, K) = \left\{\sum_{\alpha \in \mathbb{N}^{d}} \lambda_{\alpha} \mathbf{b}^{\alpha} \mid \lambda_\alpha \in K, ~ \sup_{\alpha \in \mathbb{N}^{d}} |\lambda_\alpha| r^{|\alpha|} < \infty \right\}. 
\end{eqnarray}

\begin{thm}
If $1/p \leq r < 1$, $D_r(G, K)$ is a Banach noetherian integral domain with multiplicative norm $\| ~ \|_r$. The distribution algebra 
\[ D(G, K) = \varprojlim_{r} D_r(G, K) \] is a $K$-Fr\'echet-Stein algebra.
\end{thm}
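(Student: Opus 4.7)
The theorem restates a result of Schneider--Teitelbaum \cite{ST03}. My plan is to equip $D_r(G,K)$ with the valuation filtration induced by $\|\cdot\|_r$, identify the associated graded ring as a commutative polynomial algebra, and then lift Noetherianness, the integral domain property, multiplicativity of the norm, and finally the Fr\'echet--Stein structure through the filtration.

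Writing $r = p^{-t}$ with $0 < t \leq 1$, set $F_s D_r(G,K) := \{\lambda \in D_r(G,K) : \|\lambda\|_r \leq p^{-s}\}$ for $s \in \RR$, which by (\ref{Dr series}) is a separated exhaustive $\RR$-filtration with $\sigma(b_i)$ of degree $t$. The crucial step is the commutativity of $\gr D_r(G,K)$. Since $G$ is uniform pro-$p$, the inequality $\omega([g_i,g_j]) \geq 2$ places the commutator in $G^p$; writing $[g_i,g_j] = h^p$ for some $h \in G$, one computes
\[ b_i b_j - b_j b_i = g_j g_i \cdot (h^p - 1) = g_j g_i \sum_{k=1}^p \binom{p}{k}(h-1)^k, \]
and combining $|\binom{p}{k}| \leq p^{-1}$ for $1 \leq k < p$ with $\|h-1\|_r \leq r$ yields $\|[b_i,b_j]\|_r < r^2 = \|\sigma(b_i)\sigma(b_j)\|_r$ whenever $r > 1/p$. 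The boundary case $r = 1/p$, where the power series description identifies $D_{1/p}(G,K)$ with $\wUgK$, is handled by switching to the Poincar\'e--Birkhoff--Witt filtration on $\wUgK$, whose associated graded ring $\mathrm{Sym}(\frak{g}_K)$ is commutative. Either way $\gr D_r(G,K)$ is a commutative polynomial algebra in $d$ variables over the graded residue field.

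Once commutativity is established, standard filtration-lifting in the spirit of Lem \ref{grade ring quotient generator} gives: $D_r(G,K)$ is Noetherian (lift generators of any ideal from the polynomial graded ring), is an integral domain (since $\sigma(xy) = \sigma(x)\sigma(y)$ forbids zero divisors when the graded ring has none), and $\|\cdot\|_r$ is multiplicative (a reformulation of the integral-domain graded ring). For the Fr\'echet--Stein property, the transition maps $D_{r'}(G,K) \to D_r(G,K)$ with $r \leq r'$ have dense image by (\ref{Dr series}); flatness follows by passing to the graded rings, where the induced morphism is a graded re-indexing of polynomial algebras and hence flat, and then lifting flatness back through the complete filtrations via the standard theory of Zariskian filtered rings.

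The principal obstacle is the commutator estimate under the sharp hypothesis $r \geq 1/p$, particularly the boundary case $r = 1/p$ where the naive $\|\cdot\|_r$-graded ring fails to be commutative on the nose. For that case one must instead appeal to the Lazard-type isomorphism $D_{1/p}(G,K) \cong \wUgK$ and argue through the PBW filtration on the enveloping algebra, which requires a careful comparison of the $p$-adic expansions of group commutators with the Hall--Petresco-type collection process for uniform pro-$p$ groups. Once commutativity of the associated graded ring is secured, the remaining assertions are formal consequences of well-developed techniques for complete filtered Zariskian rings.
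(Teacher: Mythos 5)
The paper does not reprove this statement at all: its entire proof is the one-line citation ``This is the main result of \cite[\S 4]{ST03}.'' Your proposal instead tries to reconstruct Schneider--Teitelbaum's argument from scratch, which is a legitimate but genuinely different route, so let me assess it on its merits.

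Your commutator estimate for $r > 1/p$ is correct and is indeed the key mechanism in \cite{ST03}: from $[g_i,g_j]\in G^p$ and $|\binom{p}{k}|\le p^{-1}$ one gets $\|[b_i,b_j]\|_r < r^2$, so $\gr_r D_r(G,K)$ is a commutative polynomial ring over $\gr K \cong k[\epsilon,\epsilon^{-1}]$. From there the Noetherian, integral-domain, and multiplicative-norm conclusions follow by the standard Zariskian filtered-ring machinery, as you say. The Fréchet--Stein flatness step is glossed too quickly --- ``graded re-indexing of polynomial algebras'' does not obviously give flatness of $D_{r'} \to D_r$ since the two graded rings sit over different valued ground rings and the transition is not a ring-homomorphism of the naive kind; in \cite{ST03} this is the most delicate part (their Theorem 4.9) --- but the overall strategy is sound.

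The real gap is at the boundary $r = 1/p$, and your proposed fix does not close it. You propose to replace the $\|\cdot\|_{1/p}$-filtration by the PBW filtration on $\wUgK$, whose associated graded is $\Sym(\frak{g}_K)$. That indeed shows $\wUgK$ is a Noetherian domain, but it says nothing about multiplicativity of the particular norm $\|\cdot\|_{1/p}$, which is a statement about the $\|\cdot\|_{1/p}$-filtered graded ring. Those two filtrations are genuinely different: under $\|\cdot\|_{1/p}$ the commutator $[\frak{X}_i,\frak{X}_j]$ has the same norm $p^{-2}$ as the product $\frak{X}_i\frak{X}_j$, so the drop in degree you rely on for $r>1/p$ disappears and $\gr_{1/p}D_{1/p}(G,K)$ is \emph{not} commutative (the paper itself records this in the remark following Theorem~\ref{Dr domain}). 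Schneider--Teitelbaum handle $r=1/p$ not by changing the filtration but by identifying $\gr_{1/p}D_{1/p}(G,K)$ with a graded noncommutative domain (related to a restricted enveloping algebra over the residue field) and invoking a PBW-type argument in that graded setting. So your sentence ``Either way $\gr D_r(G,K)$ is a commutative polynomial algebra'' is false at $r=1/p$, and with it the clean commutative-graded-ring lift of multiplicativity, Noetherianness, and the domain property. Since the theorem explicitly includes $r=1/p$ (and this endpoint, where $D_{1/p}\cong\wUgK$, is precisely what the rest of the paper uses), this is a substantive omission rather than a technicality.
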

\begin{proof}
This is the main result of \cite[\S 4]{ST03}.
\end{proof}
We remark that Schneider-Teitelbaum's definition of $\| ~ \|_r$ is slightly more complicated in general, but agrees with our $\| ~ \|_r$ because of (\ref{all val one}) due to the uniform assumption of $G$.

Let $\frak{m} := \ker(R[[G]] \to k)$ be the unique maximal ideal of $R[[G]]$.
Following \cite[\S 10]{AW13}, we consider a microlocal Ore set $S_0$:
\begin{eqnarray}\label{S0} S_0 := \bigcup_{a \geqslant 0}\left(p^{a}+\mathfrak{m}^{a+1}\right) \subseteq R [[G]].\end{eqnarray}

Associated to $S_0$, there is a flat extension (see remarks in \cite[\S 1.4]{AW13}) \begin{eqnarray}\label{micro} K[[G]] \to \wUgK \end{eqnarray} by construction of \cite[\S 10]{AW13}, called the \emph{microlocalisation} of Iwasawa algebra.
For a finitely generated $K[[G]]$-module $\wtM$, we use $\whM := \wUgK \otimes_{K[[G]]} \wtM$ to denote the \emph{microlocalisation} of $\wtM$.

From now on we assume $\frak{g}_K$ is a split, semisimple Lie algebra over $K$. 
We refer to \cite{Bou05} for a treatment of such a Lie theory.
Let $W$ be an irreducible finite dimensional representation of $U(\frak{g}_K)$ with a $U(\frak{g}_R)$-lattice $W_0 \subset W$.

$W_0$ is of finite rank, therefore automatically $p$-adic complete.
The $U(\frak{g}_R)$ action on $W_0$ extends to a $\widehat{U(\frak{g}_R)}$ action,
and the $U(\frak{g}_K)$ action on $W$ extends to a $\widehat{U(\frak{g}_K)}$ action (also see \cite[\S 9.2]{AW13}).

We pull back $W$ as a $K[[G]]$-module via the microlocalisation (\ref{micro}).
Iwasawa modules arising from this way are called \emph{Lie modules} in \cite[\S 11.1]{AW13}.
By \cite[Thm 11.1, Cor 11.1]{AW13}, $W$ remains irreducible as a $K[[G]]$-module.

Let \[ \Ann(W) \subset U(\frak{g}_K), ~ \widehat{\Ann(W)} \subset \wUgK, ~ \widetilde{\Ann(W)} \subset K[[G]] \] respectively be the annihilator ideals of $U(\frak{g}_K), \wUgK, K[[G]]$ for $W$.
Similarly, we have
\begin{equation}\label{Ann iso}
U(\frak{g}_K) / \Ann(W) \xrightarrow{\sim} \wUgK / \widehat{\Ann(W)} \xrightarrow{\sim} \End_K (W), 
\end{equation}
\begin{equation}\label{KG iso}
K[[G]] / \widetilde{\Ann(W)} \hookrightarrow \wUgK / \widehat{\Ann(W)} \xrightarrow{\sim} \End_K (W). 
\end{equation}
The second map of (\ref{Ann iso}) is surjective by \cite[Chapter VIII, \S 6.2, Cor of Prop 3]{Bou05}.  

By \cite[Thm 11.1]{AW13}, every finite dimensional $K[[G]]$-module is semisimple.
For a finitely generated $K[[G]]$-module $\wtM$, we use \[ \wtM_W := \wtM / \widetilde{\Ann(W)} \cdot \wtM ~~ (\mathrm{resp.} ~ \whM_W := \whM / \widehat{\Ann(W)} \cdot \whM) \] to denote the maximal quotient of $\wtM$ (resp. $\whM$) which is isomorphic to a finite sum of $W$ as a $K[[G]]$-module (resp. $\wUgK$-module).

\begin{thm}\label{KG quo}
Let $\wtM$ be a finitely generated $K[[G]]$-module with microlocalisation $\whM = \wUgK \otimes_{K[[G]]} \wtM$. Then the natural map \[ \wtM_W \xrightarrow{\sim} \whM_W \] is an isomorphism.
In particular, $K[[G]] / \widetilde{\Ann(W)} \xrightarrow{\sim} \wUgK / \widehat{\Ann(W)}$ in (\ref{KG iso}) is an isomorphism.
\end{thm}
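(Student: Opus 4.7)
The plan is first to establish the ``in particular'' ring-level statement $K[[G]]/\widetilde{\Ann(W)} \xrightarrow{\sim} \wUgK/\widehat{\Ann(W)}$, and then to deduce the module-level statement by a tensor-product formality.

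For the module-level deduction, note that $\whM = \wUgK \otimes_{K[[G]]} \wtM$ gives
\[
\whM_W = \whM / \widehat{\Ann(W)}\cdot\whM \;\simeq\; \bigl(\wUgK/\widehat{\Ann(W)}\bigr) \otimes_{K[[G]]} \wtM,
\]
while tautologically $\wtM_W = \bigl(K[[G]]/\widetilde{\Ann(W)}\bigr) \otimes_{K[[G]]} \wtM$. Granted the ring-level isomorphism, the natural map $\wtM_W \to \whM_W$ is obtained by tensoring the ring isomorphism with $\wtM$ over $K[[G]]$, and is therefore itself an isomorphism.

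For the ring-level statement, by (\ref{Ann iso}) it suffices to show that the image of $K[[G]]$ in $\End_K(W)$ is all of $\End_K(W)$; injectivity is already recorded in (\ref{KG iso}). I would apply the Jacobson density theorem, which needs (i) $W$ is simple as a $K[[G]]$-module, and (ii) $\End_{K[[G]]}(W) = K$. Both assertions reduce to the claim that every $K[[G]]$-stable $K$-subspace $W' \subseteq W$ is automatically $\wUgK$-stable. For this, the microlocalisation (\ref{micro}) of \cite[\S 10]{AW13} is built from the Ore set $S_0$ of (\ref{S0}), so $\wUgK$ is (topologically) generated by $K[[G]]$ together with inverses of elements $s \in S_0$. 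Each such $s$ is a unit in $\wUgK$, and thus acts invertibly on the $\wUgK$-module $W$; its restriction to the finite-dimensional $K[[G]]$-stable subspace $W'$ is then injective, hence bijective, so $s^{-1}$ preserves $W'$ as well. A parallel argument shows any $K[[G]]$-linear endomorphism of $W$ commutes with every $s^{-1}$, hence with the full $\wUgK$-action. Simplicity of $W$ as a $K[[G]]$-module then follows from its assumed irreducibility as a $U(\frak{g}_K)$-module, and the centralizer collapses to $\End_{\wUgK}(W) = K$ by (\ref{Ann iso}); Jacobson density then forces the image of $K[[G]]$ to coincide with $\End_K(W)$.

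The main obstacle is confirming that the Ore-fraction/limit manipulation is legitimate at the level of the completed microlocalisation, that is, stability of $W'$ under $K[[G]]$ and under each $s^{-1}$ for $s \in S_0$ really propagates to the entire $\wUgK$. Since $W$ is finite-dimensional (so $W'$ is automatically closed) and the microlocalisation is a completion of an Ore localization in a topology in which the $K[[G]]$-action is continuous, this should be routine; a safety net, should it prove delicate, is to invoke \cite[Thm 11.1]{AW13} to decompose $W$ as a semisimple $K[[G]]$-module and match its simple constituents against those of $W$ as a $\wUgK$-module.
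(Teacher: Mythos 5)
Your proof is correct, but it runs in essentially the opposite direction from the paper's: you first establish the ring-level isomorphism $K[[G]]/\widetilde{\Ann(W)} \xrightarrow{\sim} \wUgK/\widehat{\Ann(W)}$ (which the paper states only as an \emph{in particular}) and then deduce the module statement by tensoring, whereas the paper proves the module statement directly and extracts the ring-level fact as the special case $\wtM = K[[G]]$. Concretely, the paper's mechanism is \cite[Prop~11.1]{AW13} applied to the finite direct sum $\wtM_W$, which immediately gives $\wtM_W \xrightarrow{\sim} \wUgK \otimes_{K[[G]]} \wtM_W$; combined with flatness of the microlocalisation, this yields a short exact sequence that sandwiches $\widehat{\Ann(W)}\cdot\whM$ between $\wUgK \otimes_{K[[G]]}\widetilde{\Ann(W)}\cdot\wtM$ on both sides, forcing equality. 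Your argument instead invokes Jacobson density, and the real content is your reduction of both hypotheses (simplicity of $W$ over $K[[G]]$, and $\End_{K[[G]]}(W)=K$) to the claim that every $K[[G]]$-stable subspace of $W$ is $\wUgK$-stable. The ``limit manipulation'' worry you flag is indeed routine and can be made airtight: since $\wUgK$ is a Noetherian Banach $K$-algebra, $\widehat{\Ann(W)}$ is closed, so the action map $\wUgK \to \End_K(W)$ is a continuous surjection of $K$-Banach spaces; by the open mapping theorem the image of the dense subring $K[[G]]_{S_0}$ is dense in $\End_K(W)$, hence equal to it by finite-dimensionality, and a $K[[G]]_{S_0}$-stable $K$-subspace is then automatically $\End_K(W)$-stable. (Even more cheaply: by Cayley--Hamilton, the image of $s^{-1}$ in $\End_K(W)$ is a $K$-polynomial in the image of $s$, so the images of $K[[G]]$ and of $K[[G]]_{S_0}$ in $\End_K(W)$ coincide.) What your route buys is that it bypasses \cite[Prop~11.1]{AW13} entirely and only uses the density of the Ore localisation in $\wUgK$ together with (\ref{Ann iso}); what the paper's route buys is that it never needs simplicity of $W$ over $K[[G]]$ or Jacobson density, treating \cite[Prop~11.1]{AW13} as a black box. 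One caveat: your proposed ``safety net'' via \cite[Thm~11.1]{AW13} is vaguer than the main argument and does not obviously close the gap on its own, since semisimplicity alone does not rule out a nontrivial $K[[G]]$-decomposition of $W$ without appealing to the same $\wUgK$-stability claim; stick with the open-mapping/Cayley--Hamilton argument.
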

\begin{proof}
We apply the flat base change microlocalisation $K[[G]] \to \wUgK$ (\ref{micro}) to the short exact sequence \[ 0 \to \widetilde{\Ann(W)}\cdot \wtM \to \wtM \to \wtM_W \to 0 \] to get 
\begin{eqnarray}\label{micro exact} 0 \to \wUgK \otimes_{K[[G]]} \widetilde{\Ann(W)}\cdot \wtM \to \whM \to \wUgK \otimes_{K[[G]]} \wtM_W \to 0. \end{eqnarray}
The microlocal Ore set $S_0$ (\ref{S0}) acts invertibly on $W$ as $S_0$ consists of units in $\wUgK$ ($S_0$ is inverted to form the microlocalisation in Ardakov-Wadsley's construction, also see the proof of part (b) of \cite[Thm 11.1]{AW13}).
We can apply \cite[Prop 11.1]{AW13} to $W$ taking $n = 0$, which asserts that
the natural map $\wtM_W \xrightarrow{\sim} \wUgK \otimes_{K[[G]]} \wtM_W$ is an isomorphism as $K[[G]]$-modules because $\wtM_W$ is isomorphic to a direct sum of $W$.
By our assumption on $W$, the $K[[G]]$ action on $\wtM_W$ (uniquely) extends to $\wUgK$, it is moreover an isomorphism over $\wUgK$ because of the natural $\wUgK$-equivariant reverse map $\wUgK \otimes_{K[[G]]} \wtM_W \to \wtM_W$.

By maximality of $\whM_W$, the exact sequence (\ref{micro exact}) gives \[ \widehat{\Ann(W)} \cdot \whM \subset \wUgK \otimes_{K[[G]]} \widetilde{\Ann(W)}\cdot \wtM, \]
both as submodules of $\whM$.
From (\ref{KG iso}) we get $\wUgK \cdot \widetilde{\Ann(W)} \subset \widehat{\Ann(W)}$, therefore \[ \wUgK \otimes_{K[[G]]} \widetilde{\Ann(W)}\cdot \wtM \subset \widehat{\Ann(W)} \cdot \wUgK \otimes_{K[[G]]} \wtM = \widehat{\Ann(W)} \cdot \whM. \]
This forces $\wUgK \otimes_{K[[G]]} \widetilde{\Ann(W)} \cdot \wtM = \widehat{\Ann(W)} \cdot \whM$ and therefore $\whM_W = \wtM_W$.
\end{proof}

\section{Infinitesimal specialization}\label{specialization}

We continue to use notation from \S \ref{alg quo}.
For simplicity, we take $K$ to be $\QQ_p$.
The norm $| ~ |$ on $\QQ_p$ is normalized as usual by $| p | = p^{-1}$.
Let $C(G, \QQ_p)$ and $C^\la(G, \QQ_p)$, $C^{\text{sm}}(G, \QQ_p)$ be respectively the space of continuous functions on $G$, the space of locally analytic functions on $G$, and the space of smooth functions on $G$, all valued in $\QQ_p$.
The Lie algebra $\frak{g}$ acts on $C^\la(G, \QQ_p)$ by continuous endomorphisms defined by \begin{eqnarray}\label{lie la} \mathfrak{x} f:=\lim_{t \rightarrow 0} \frac{ (t \cdot \mathfrak{x}) f-f}{t} \end{eqnarray}
for $\frak{x} \in \frak{g}_0, f \in C^\la(G, \QQ_p)$ where the dot action $t \cdot \frak{x}$ is given by (\ref{dot lie action}).
We have a natural inclusion 
\begin{eqnarray}\label{inc lie to dist}
U(\frak{g}) \to D(G,\QQ_p). \end{eqnarray}

We further assume that $G$ is an open subgroup of the group of $\QQ_p$-rational points of a connected split reductive $\QQ_p$-group $\GG$ with Borel pair $(\BB,\TT)$. 
The $\QQ_p$-split Lie algebra of $\GG$ should be identified with the Lie algebra $\frak{g}$ associated to $G$ in \S \ref{alg quo}. 
We use $\frak{g}_0$ to denote $\frak{g}_{\ZZ_p}$.

Let $T := \TT(\QQ_p) \cap G$ be a torus of $G$, $B := \BB(\QQ_p) \cap G$ be a Borel subgroup with unipotent radical $N$ such that $B = TN$.
Let $\frak{t}_0$, $\frak{b}_0$, $\frak{n}_0$ respectively be their associated Lie algebras over $\ZZ_p$, with generic fibres $\frak{t}$, $\frak{b}$, $\frak{n}$.
Suppose $\overline{\frak{n}}_0$ is an opposite nilpotent of $\frak{n}$ such that $\frak{g}_0$ admits a triangular decomposition
\[ \frak{g}_0 = \overline{\frak{n}}_0 \oplus \frak{t}_0 \oplus \frak{n}_0. \]
Let $\Zgo$, $Z(\frak{g})$ respectively be the centers of $U(\frak{g}_0)$, $U(\frak{g})$.
Any character of $\Zgo$ is naturally extended to a character of $\Zg$.
There is the Harish-Chandra homomorphism \cite[Chap VIII, \S 6.4]{Bou05} associated to the triangular decomposition
\begin{eqnarray}\label{HC hom}
\mathrm{HC}: Z(\frak{g}) \to U(\frak{t}).
\end{eqnarray}

If $\chi : T \to 1+p\ZZ_p$ is a continuous/locally analytic character of the torus $T$, it induces a character $d\chi: U(\frak{t}_0) \to \ZZ_p$ by formula (\ref{lie la}), extending to $d\chi: U(\frak{t}) \to \QQ_p$.
We call an infinitesimal character $\lambda: Z(\frak{g}) \to \QQ_p$ \emph{induced} if $\lambda = d\chi \circ \mathrm{HC}$ for a character $\chi$ of $T$.

Our main theorem in this section is the following.
\begin{thm}\label{mic inf sp}
Let $\lambda$ be an induced infinitesimal character.
If $G$ is a finite product of first congruence subgroups of $\SL_2(\ZZ_p)$,
the composition of microlocalisation (\ref{micro}) with infinitesimal specialization is injective.
\[ \QQ_p[[G]] \hookrightarrow \widehat{U(\frak{g})} \otimes_{\Zg,\lambda} \QQ_p. \]
\end{thm}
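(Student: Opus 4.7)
The strategy is to realise the target $\widehat{U(\frak{g})}_\lambda$ faithfully on the locally analytic principal series attached to a character inducing $\lambda$, and then to show that the Iwasawa algebra acts faithfully on the same representation, so that the kernel of the map in question must be zero.

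\emph{Reduction to rank one.} Since $G = \prod_i G_i$ and $\frak{g} = \bigoplus_i \frak{sl}_{2,\QQ_p}$, both sides split as completed tensor products: $\QQ_p[[G]] \simeq \widehat{\bigotimes}_i \QQ_p[[G_i]]$ and $\widehat{U(\frak{g})} \otimes_{Z(\frak{g}),\lambda}\QQ_p \simeq \widehat{\bigotimes}_i \widehat{U(\frak{sl}_{2,\QQ_p})}_{\lambda_i}$, where each factor on the right is a Noetherian integral domain by Lemma \ref{Ug quot cent}. This reduces the problem to the case where $G$ is the first congruence subgroup of $\SL_2(\ZZ_p)$ and $\lambda \in \ZZ_p$.

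\emph{Principal series as test module.} By hypothesis $\lambda = d\chi \circ \mathrm{HC}$ for some locally analytic $\chi\colon T \to 1+p\ZZ_p$. Form the locally analytic principal series
\[ \pi_\chi := \{ f \in C^{\mathrm{la}}(G,\QQ_p) \mid f(bg) = \chi(b)f(g) \text{ for all } b \in B,\, g \in G \}, \]
with $G$ acting by right translation. The continuous action extends to $D(G,\QQ_p)$, and in particular to $\widehat{U(\frak{g})} = D_{1/p}(G,\QQ_p)$. Evaluating $\Delta$ on the $B$-highest-weight line in $\pi_\chi$ and using the Harish-Chandra homomorphism (\ref{HC hom}) shows that $Z(\frak{g})$ acts on all of $\pi_\chi$ by $\lambda$, so the $\QQ_p[[G]]$-action factors through $\QQ_p[[G]] \to \widehat{U(\frak{g})}_\lambda \to \End_{\mathrm{cts}}(\pi_\chi)$.

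\emph{Faithfulness step.} It remains to show the composite $\QQ_p[[G]] \to \End_{\mathrm{cts}}(\pi_\chi)$ is injective. Under the Iwahori factorisation $G = N^- T N$, restriction $f \mapsto f|_{N^-}$ identifies $\pi_\chi$ as a Fr\'echet space with $C^{\mathrm{la}}(N^-,\QQ_p)$; the right-regular $G$-action becomes an explicit $\chi$-twisted translation on $C^{\mathrm{la}}(N^-,\QQ_p)$, in which the $N^-$-subaction is ordinary translation. Amice duality for the abelian pro-$p$ group $N^-$ gives that $\QQ_p[[N^-]]$ acts faithfully on $C^{\mathrm{la}}(N^-,\QQ_p)$. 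For $\mu \in \QQ_p[[G]]$ annihilating $\pi_\chi$, the Iwahori projection $g \mapsto (b(n,g),n'(n,g))$ twisted by $\chi$ along each $B$-fibre produces a family of measures on $N^-$ (indexed by $n \in N^-$) that must all annihilate $C^{\mathrm{la}}(N^-,\QQ_p)$, hence be zero; letting $n$ vary recovers $\mu$ and forces $\mu = 0$, giving the required injectivity.

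The \emph{main obstacle} is the last faithfulness argument. Concretely one must verify, using the $\QQ_p$-analytic structure of $G = N^- T N$, that the family of $\chi$-twisted Iwahori pushforwards $\{n \mapsto \int_B\chi(b) d\mu(n^{-1}b\,\cdot)\}_{n \in N^-}$ determines $\mu$ uniquely, thereby ruling out a nontrivial element of $\QQ_p[[G]]$ all of whose pushforwards vanish. This is the $p$-adic analogue of separation by matrix coefficients and is, I expect, the technically hardest step of the whole argument.
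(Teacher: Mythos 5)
Your overall strategy (find a module with infinitesimal character $\lambda$ on which $\QQ_p[[G]]$ acts faithfully) is a legitimate one, and a version of it does work: it is essentially what Ardakov--Wadsley do in \cite{AW14} with affinoid Verma modules, and the paper remarks after Theorem \ref{main spec} that their result implies this theorem. However, as written your proof has a critical gap at the step where you assert that the $\QQ_p[[G]]$-action on $\pi_\chi$ factors through $\widehat{U(\frak{g})}_\lambda$.

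The issue is the sentence ``The continuous action extends to $D(G,\QQ_p)$, and in particular to $\widehat{U(\frak{g})} = D_{1/p}(G,\QQ_p)$.'' This ``in particular'' runs the wrong way. We have an inclusion $D(G,\QQ_p) \hookrightarrow D_{1/p}(G,\QQ_p)$ (the Fr\'echet--Stein algebra $D(G,\QQ_p)$ is the inverse limit over $r$ of the Banach completions $D_r(G,\QQ_p)$, and embeds into each of them), not the other way around. A locally analytic representation such as $\pi_\chi$ is naturally a module over $D(G,\QQ_p)$ but \emph{not} over $D_{1/p}(G,\QQ_p)$: the latter acts only on the subspace of $G$-analytic vectors. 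Consequently your argument, even granting the faithfulness step, would prove only that $\QQ_p[[G]] \hookrightarrow D(G,\QQ_p)\otimes_{\Zg,\lambda}\QQ_p$. That weaker statement is exactly Theorem \ref{dis inf sp} of the paper (whose density proof via $f_0 \cdot C^{\mathrm{sm}}(G,\QQ_p) \subset C^\la(G,\QQ_p)[\lambda]$ is cleaner than your proposed Iwahori-decomposition argument, but proves the same thing). The real difficulty of the theorem is precisely passing from $D(G,\QQ_p)_\lambda$ to $\widehat{U(\frak{g})}_\lambda = D_{1/p}(G,\QQ_p)_\lambda$: the map $D(G,\QQ_p)_\lambda \to D_{1/p}(G,\QQ_p)_\lambda$ is not obviously injective after tensoring with $\QQ_p$ over $\Zg$, and this is the step your plan does not address at all.

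The paper's actual argument devotes Proposition \ref{D inv lim iso}, Theorem \ref{Dr domain}, Proposition \ref{crossed prod Dr}, and a PBW-basis comparison $U_r^\lambda(\frak{g})\hookrightarrow\widehat{U(\frak{g})}_\lambda$ to bridging exactly this gap: a nonzero $\delta \in \QQ_p[[G]]$ is seen to be nonzero in some $D_{r_n}^\lambda(G,\QQ_p)$ with $r_n > 1/p$; the integral domain property and dimension theory for the crossed product $D_{r_n}^\lambda = U_{r_n}^\lambda * (G/G^{p^n})$ produce a $\delta'$ with $0 \neq \delta\delta' \in U_{r_n}^\lambda(\frak{g})$, which then injects into $\widehat{U(\frak{g})}_\lambda$. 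If you want to push through a test-module argument instead, you would need a module that genuinely carries a $D_{1/p}(G,\QQ_p)$-action with the right central character (an analytic rather than locally analytic principal series, or an affinoid Verma module as in \cite{AW14}), and establish faithfulness of $\QQ_p[[G]]$ on \emph{that}; faithfulness on the full locally analytic $\pi_\chi$ does not suffice.

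A smaller remark: the reduction to rank one at the start is not needed (the paper's lemmas such as Lemma \ref{Ug quot cent} already treat the product $\frak{g} = \bigoplus_i \frak{sl}_{2}$ directly), and the assertion that injectivity for each completed tensor factor implies injectivity of the completed tensor product deserves a word of justification, though it is true for $\QQ_p$-Banach spaces.
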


The corresponding statement for distribution algebra turns out to be much easier and it serves as a first step to prove Thm \ref{mic inf sp}.

For a proof, we make use of the locally analytic principal series of $\chi : T \to 1+p\ZZ_p$
\[ \Ind(\chi) := \{f: G \to \QQ_p | f ~ \text{locally analytic}, ~ f(gtn)= \chi(t) f(g), ~ \forall t \in T, n \in N, g\in G \}. \]
The locally analytic principal series has an induced infinitesimal character determined by $d \chi$ via the Harish-Chandra homomorphism (\ref{HC hom}).

\begin{thm}\label{dis inf sp}
As $\QQ_p$-Fr\'echet spaces, $D(G,\QQ_p) \otimes_{\Zg,\lambda} \QQ_p$ is the strong dual of the locally convex vector space of compact type $C^\la(G, \QQ_p)[\lambda]$, where $C^\la(G, \QQ_p)[\lambda]$ is the $\lambda$-isotypic part of the space of locally analytic functions on $G$.

If the infinitesimal character $\lambda$ is induced,
the composition of (\ref{ST dis}) with infinitesimal specialization is injective.
\[ \QQ_p[[G]] \hookrightarrow D(G,\QQ_p) \otimes_{\Zg,\lambda} \QQ_p. \] 
\end{thm}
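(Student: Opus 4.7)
The proof naturally splits into two claims: the Fr\'echet duality and the injectivity.

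For the duality, $C^\la(G,\QQ_p)$ is a reflexive locally convex space of compact type with strong dual $D(G,\QQ_p)$ (Schneider--Teitelbaum), and since $Z(\frak g)$ acts on $C^\la(G,\QQ_p)$ via (\ref{lie la}) by continuous operators, the $\lambda$-isotypic subspace $C^\la(G,\QQ_p)[\lambda] = \bigcap_{z\in Z(\frak g)} \ker(z-\lambda(z))$ is closed and hence again of compact type. The standard anti-equivalence between closed subspaces of a reflexive compact-type space and strict Hausdorff quotients of its Fr\'echet dual then yields
\[ \bigl(C^\la(G,\QQ_p)[\lambda]\bigr)' \simeq D(G,\QQ_p)\bigg/\sum_{z\in Z(\frak g)}(z-\lambda(z))D(G,\QQ_p). \]
Closedness of this left ideal, so that the closure on the right can be omitted, is verified in each Banach quotient $D_r(G,\QQ_p)$ of the Fr\'echet-Stein presentation, using that $Z(\frak g)$ is finitely generated (polynomial in the Casimirs $\Delta_i$) and that each $\Delta_i-\lambda(\Delta_i)$ is a non-zero-divisor in the integral domain $D_r(G,\QQ_p)$ for $1/p\leq r<1$.

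For the injectivity, the natural map $\QQ_p[[G]]\to D(G,\QQ_p)$ is simply the restriction of continuous functionals along the dense inclusion $C^\la(G,\QQ_p)\hookrightarrow C(G,\QQ_p)$, so by Part~1, $\mu\in\QQ_p[[G]]$ lies in the kernel of $\QQ_p[[G]]\to D(G,\QQ_p)\otimes_{Z(\frak g),\lambda}\QQ_p$ iff its restriction to $C^\la(G,\QQ_p)[\lambda]$ vanishes. Since every such $\mu$ is sup-norm-continuous on $C(G,\QQ_p)$, this is equivalent to the vanishing of $\mu$ on the sup-norm closure of $C^\la(G,\QQ_p)[\lambda]$ inside $C(G,\QQ_p)$, so the injectivity is equivalent to the density statement
\[ \overline{C^\la(G,\QQ_p)[\lambda]}^{\,C(G,\QQ_p)} \;=\; C(G,\QQ_p). \]

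To produce enough functions in $C^\la(G,\QQ_p)[\lambda]$ to establish this density, I would exploit the induced hypothesis. Pick a locally analytic $\chi:T\to 1+p\ZZ_p$ with $d\chi\circ\mathrm{HC}=\lambda$. Then $\Ind(\chi)$ and all its matrix coefficients $\phi_{v,v^*}(g)=v^*(g\cdot v)$ for $v\in\Ind(\chi)$, $v^*\in\Ind(\chi)'$ lie in $C^\la(G,\QQ_p)[\lambda]$, because $Z(\frak g)$ is central and acts on any such coefficient through $\lambda$. Since $G$ is a product of first congruence subgroups of $\SL_2(\ZZ_p)$, the Iwasawa decomposition $G=N^-TN$ is a global analytic diffeomorphism, so restriction to $N^-$ gives $\Ind(\chi)\xrightarrow{\sim}C^\la(N^-,\QQ_p)$, and an explicit Bruhat-factorization calculation of $g^{-1}n^-$ makes $\phi_{v,v^*}(g)$ completely explicit in terms of $\chi$, $v\in C^\la(N^-,\QQ_p)$, and $v^*\in D(N^-,\QQ_p)$.

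The main obstacle is to upgrade this explicit three-parameter family of matrix coefficients into sup-norm density in $C(G,\QQ_p)$. My strategy is coordinate-wise: specialize $g$ to each of the $N^-$, $T$, $N$ factors of the Iwasawa decomposition and use the freedom to vary $v$, $v^*$ (and, if needed, the Weyl-conjugate $\chi^w$, which also induces $\lambda$) to show that the resulting restrictions recover sup-norm-dense subspaces of $C(N^-,\QQ_p)$, $C(T,\QQ_p)$, $C(N,\QQ_p)$; a Mahler-basis / $p$-adic Stone--Weierstrass argument then promotes these coordinate-wise densities to density in $C(G,\QQ_p)\simeq C(N^-,\QQ_p)\,\hat{\otimes}\,C(T,\QQ_p)\,\hat{\otimes}\,C(N,\QQ_p)$. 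The crucial place where the induced hypothesis enters is the $T$-coordinate step: it is the existence of an honest character $\chi$ of $T$ lifting the infinitesimal character $\lambda$ that lets one separate the $T$-directions of arbitrary measures $\mu\in\QQ_p[[G]]$; for non-induced $\lambda$ no such $\chi$ exists and the argument breaks down at the outset.
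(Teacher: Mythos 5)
Your treatment of the first claim (the Fr\'echet duality) is essentially the paper's: the paper forms the $G$-equivariant short left exact sequence $0 \to C^\la(G, \QQ_p)[\lambda] \to C^\la(G, \QQ_p) \to C^\la(G, \QQ_p)^n$, $f\mapsto(z_if)_i$, and applies the Schneider--Teitelbaum anti-equivalence between admissible locally analytic representations and coadmissible $D(G,\QQ_p)$-modules, which automatically handles the closedness you verify at the $D_r$-level. (Your appeal to the $z_i-\lambda(z_i)$ being non-zero-divisors is not really what drives closedness; it is Noetherianity/coadmissibility, but the conclusion is the same.)

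For the second claim there is a genuine gap, and you have named it yourself: the step ``upgrade this explicit three-parameter family of matrix coefficients into sup-norm density in $C(G,\QQ_p)$'' is not carried out, only sketched, and the proposed Stone--Weierstrass route is actually blocked by a structural obstacle. The matrix coefficients of the single representation $\Ind(\chi)$ are \emph{not} closed under pointwise multiplication --- the product of two such coefficients is a matrix coefficient of $\Ind(\chi)\otimes\Ind(\chi)$, which has a different infinitesimal character --- so they do not form an algebra, and a $p$-adic Stone--Weierstrass or Mahler argument cannot be applied to them directly. Moreover, coordinate-wise density of the restrictions to $N^-$, $T$, $N$ does not by itself give density on $G\simeq N^-\times T\times N$, because the restrictions of a single matrix coefficient to the three factors are correlated. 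The paper sidesteps both problems by not restricting attention to matrix coefficients of $\Ind(\chi)$ at all: it observes that $C^\la(G,\QQ_p)[\lambda]$ is stable under pointwise multiplication by \emph{smooth} functions (the Leibniz rule kills the smooth factor, so $z(f_0 s)=z(f_0)s=\lambda(z)f_0 s$), and therefore contains $f_0\cdot C^{\mathrm{sm}}(G,\QQ_p)$ for any nowhere-vanishing $f_0\in\Ind(\chi)$, e.g.\ the pullback under $G\twoheadrightarrow G/B$ of the constant function $1\in C^\la(G/B,\QQ_p)$. Since multiplication by a nowhere-vanishing continuous function is a homeomorphism of $C(G,\QQ_p)$ and $C^{\mathrm{sm}}$ is dense, this subspace is dense, and the proof closes. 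The essential idea you are missing is precisely this enlargement of the supply of $\lambda$-isotypic functions: the $\lambda$-isotypic subspace of $C^\la(G,\QQ_p)$ is far bigger than the matrix coefficient space of a single principal series, and is stable under the obviously dense module of smooth functions.
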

\begin{proof}
Let $z_1,\cdots,z_n$ be a set of generators of the kernel of $\lambda$. 
By \cite[Prop 3.7]{ST02J}, these differential operators define $G$-equivariant endomorphisms of $C^\la(G, \QQ_p)$.
Therefore we have a short left exact sequence of admissible locally analytic representations of $G$:
\begin{eqnarray}
\label{left short la} 0 \to C^\la(G, \QQ_p)[\lambda] \to C^\la(G, \QQ_p) & \to & C^\la(G, \QQ_p)^n \\
f & \mapsto & (z_i \cdot f)_i.
\end{eqnarray}
By taking the strong dual of (\ref{left short la}) and the anti-equivalence of categories between admissible locally analytic representations and coadmissible $D(G, \QQ_p)$-modules \cite[Thm 6.3]{ST03}, we identify $D(G, \QQ_p) \otimes_{\Zg,\lambda} \QQ_p$ with $C^\la(G, \QQ_p)[\lambda]'_b$.

For the second part, as $\QQ_p[[G]]$ is dual to $C(G, \QQ_p)$, it suffices to prove $C^\la(G, \QQ_p)[\lambda]$ is dense in $C(G, \QQ_p)$.

We choose any locally analytic character $\chi : T \to K^\times$ such that $\Ind(\chi)$ has the infinitesimal character $\lambda$.
Let $X$ be the quotient space $X := G / B$, then there is a splitting $X \hookrightarrow G \twoheadrightarrow X$ as $p$-adic manifolds such that 
$\Ind(\chi) \simeq C^\la(X, \QQ_p)$ as topological $K$-vector spaces.
We choose any nowhere vanishing function $f_0 \in C^\la(X, \QQ_p)$ so that $f_0 \in C^\la(G, \QQ_p)[\lambda]$.
The pointwise product of $f_0$ with any smooth function still has the infinitesimal character $\lambda$.
We see that $f_0 \cdot C^{\text{sm}}(G, \QQ_p) \subset C^\la(G, \QQ_p)[\lambda] \subset C(G, \QQ_p)$ is clearly dense in $C(G, \QQ_p)$.
\end{proof}

Following \cite{Fro03}, \cite{Koh07}, we define $U_r(\frak{g})$ to be the closure of $U(\frak{g})$ in $D_r(G, \QQ_p)$ with respect to the norm $\| ~ \|_r$ for $0 < r < 1$.
If $\lambda : \Zg \to \QQ_p$ is an infinitesimal character, we define
\[ U_r^\lambda(\frak{g}) := U_r(\frak{g}) \otimes_{\Zg,\lambda} \QQ_p, ~ D_r^\lambda(G, \QQ_p) := D_r(G, \QQ_p) \otimes_{\Zg,\lambda} \QQ_p. \]

\begin{prop}\label{crossed prod Dr}
If $r = \sqrt[p^n]{1 / p}$ for $n \in \ZZ_{\geq 1}$, $D_r(G, \QQ_p)$ is a crossed product (\cite[\S 1.5.8]{MR01}, \cite[Proof of Prop 10.6]{AW13}) of $U_r(\frak{g})$ by $G / G^{p^n}$.
Consequently, \[ D_r^\lambda(G, \QQ_p) \simeq U_r^\lambda(\frak{g}) * (G / G^{p^n}) \] is a crossed product of $U_r^\lambda(\frak{g})$ by $G / G^{p^n}$.
\end{prop}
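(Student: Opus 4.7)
The plan is to prove the first assertion by exhibiting a left $U_r(\frak{g})$-module decomposition of $D_r(G, \QQ_p)$ indexed by $G/G^{p^n}$, then to deduce the corollary by base change along the central embedding $\Zg \hookrightarrow U_r(\frak{g})$. Fix lifts $\gamma_1, \ldots, \gamma_m \in G$ of the cosets in $G/G^{p^n}$.

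The central step is to identify $U_r(\frak{g})$ with the closed $\QQ_p$-subalgebra of $D_r(G, \QQ_p)$ topologically generated by $G^{p^n}$. In one direction, for any $0 < r < 1$ the series $\mathfrak{x}_i := \log(g_i) = \sum_{k \geq 1}(-1)^{k+1} b_i^k / k$ converges in $D_r(G, \QQ_p)$ since $|1/k| \, r^k \to 0$, so $\frak{g} \subset U_r(\frak{g})$. In the other direction, the choice $r = p^{-1/p^n}$ gives $r^{p^n} = 1/p$; using $v_p(k!) \leq k/(p-1)$ and $\|\mathfrak{x}_i\|_r \leq r$ one estimates
\[ \| (p^n \mathfrak{x}_i)^k / k! \|_r \leq p^{-nk + k/(p-1)} r^k \to 0, \]
so $\exp(p^n \mathfrak{x}_i)$ converges in $U_r(\frak{g})$, recovering $g_i^{p^n}$. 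Hence $G^{p^n} \subset U_r(\frak{g})^{\times}$, and comparing the Mahler presentation (\ref{Dr series}) with a PBW expansion in $\{\mathfrak{x}_i\}$ shows that $U_r(\frak{g})$ is precisely the Banach completion of $\QQ_p[G^{p^n}]$ inside $D_r(G, \QQ_p)$.

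Every $g \in G$ factors uniquely as $g = h \gamma_{j}$ with $h \in G^{p^n}$, so combining the previous identification with the $G$-bi-invariance of $\|\cdot\|_r$ on uniform groups (cf.\ (\ref{all val one})), the summation map
\[ \Phi : \bigoplus_{j=1}^m U_r(\frak{g}) \gamma_j \longrightarrow D_r(G, \QQ_p), \qquad (u_j) \longmapsto \sum_j u_j \gamma_j \]
is an isomorphism of $\QQ_p$-Banach spaces, which gives the claimed free left-module decomposition. The crossed product axioms then follow: conjugation by $\gamma_j$ preserves $G^{p^n}$, hence extends continuously to a $\QQ_p$-algebra automorphism of $U_r(\frak{g})$, and the $2$-cocycle is $\tau(\bar\gamma_i, \bar\gamma_j) := \gamma_i \gamma_j \gamma_{\overline{ij}}^{-1} \in G^{p^n} \subset U_r(\frak{g})^{\times}$. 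For the corollary, the kernel of $\lambda : \Zg \to \QQ_p$ sits inside $U_r(\frak{g})$ and commutes with all of $G$, hence is central in $D_r(G, \QQ_p)$ and respects the coset decomposition; tensoring term-by-term produces $D_r^\lambda(G, \QQ_p) \simeq U_r^\lambda(\frak{g}) * (G/G^{p^n})$.

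The main obstacle is the change-of-variables step behind the identification of $U_r(\frak{g})$ with the Banach completion of $\QQ_p[G^{p^n}]$: one must verify that re-expanding the Mahler basis $\{\mathbf{b}^\alpha\}$ in terms of ordered PBW monomials in $\{\mathfrak{x}_i\}$ (and vice versa) defines a bicontinuous bijection, despite the non-commutativity. This is precisely where the balance $r = p^{-1/p^n}$ between the $\log$ and $\exp$ estimates is essential.
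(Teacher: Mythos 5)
Your overall strategy matches the paper's: decompose $D_r(G,\QQ_p)$ as a free $U_r(\frak{g})$-module on coset representatives of $G/G^{p^n}$, check the cocycle lands in $U_r(\frak{g})^\times$, then pass to the quotient by the central ideal $\ker\lambda$. But there are two real problems.

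First, the key structural input is not established. The assertion that $D_r(G,\QQ_p)$ is a \emph{free} left (or right) $U_r(\frak{g})$-module on a set of coset representatives of $G/G^{p^n}$ is exactly Frommer's theorem, which the paper cites via \cite{Fro03} and \cite[Thm (Frommer), Proof of Cor 1.4.1]{Koh07}. You instead try to derive it from a "change-of-variables" between the Mahler basis $\{\mathbf{b}^\alpha\}$ and ordered PBW monomials in the $\mathfrak{x}_i$, but you explicitly flag this as "the main obstacle" and never carry it out. This is precisely the nontrivial analytic content of the proposition: verifying that the re-expansion is bicontinuous (with the same radius $r$) is a genuine theorem, not something that follows by inspection from the two series descriptions. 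Without citing Frommer or supplying that verification, the claimed decomposition $\Phi:\bigoplus_j U_r(\frak{g})\gamma_j \xrightarrow{\sim} D_r(G,\QQ_p)$ is unproved. (Relatedly, the intermediate claim that $U_r(\frak{g})$ equals the Banach closure of $\QQ_p[G^{p^n}]$ inside $D_r$ is also a substantive statement that you assert rather than prove.)

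Second, the norm estimate is wrong. You claim $\|\mathfrak{x}_i\|_r\leq r$, but $\mathfrak{x}_i=\log(1+b_i)=\sum_{k\geq 1}(-1)^{k+1}b_i^k/k$, and $\|\mathfrak{x}_i\|_r=\sup_{k\geq 1} r^k/|k|$; with $r=p^{-1/p^n}$ the supremum is attained at $k=p^n$ and equals $p^{n-1}$, not something $\leq r<1$. The correct estimate is
\[
\Bigl\|\frac{(p^n\mathfrak{x}_i)^k}{k!}\Bigr\|_r \;\leq\; p^{-nk}\,p^{k/(p-1)}\,p^{(n-1)k} \;=\; p^{-k\frac{p-2}{p-1}},
\]
which tends to $0$ only because $p$ is odd. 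Your incorrect bound $p^{-nk+k/(p-1)}r^k$ would spuriously converge even for $p=2$, so the error actually hides where the running hypothesis $p\geq 3$ is used. The conclusion $g_i^{p^n}\in U_r(\frak{g})$ is still true, but you should fix the estimate and make the dependence on $p\geq 3$ explicit, as the paper does.

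The part of your argument that \emph{is} fine and in the right spirit: conjugation by $\gamma\in G$ preserves $U(\frak{g})$ (and hence its closure), the $2$-cocycle has values in $G^{p^n}\subset U_r(\frak{g})^\times$ once $g^{p^n}\in U_r(\frak{g})$ is known, and centrality of $\Zg$ (via \cite[Prop 3.7]{ST02J}) lets the crossed product pass to $D_r^\lambda$. Those steps are essentially what the paper does once Frommer's theorem is in hand.
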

\begin{proof}
See \cite[(6.8), Cor 5.13]{Sch13} for a similar statement of the first claim. 
We give a proof as follows.
By (\ref{lie la}), for any $g \in G, \frak{x} \in \frak{g}_0, f \in C^\la(G, \QQ_p)$,
\begin{eqnarray*} g (\frak{x} \cdot (g^{-1} f)) & = & g \left( \lim_{t \rightarrow 0} \frac{ (t \cdot \mathfrak{x}) (g^{-1}f)-(g^{-1}f)}{t} \right) \\ & = & \lim_{t \rightarrow 0} \frac{ g (t \cdot \mathfrak{x}) g^{-1}f-f}{t} \\ & = & \lim_{t \rightarrow 0} \frac{ (t \cdot g \mathfrak{x} g^{-1})f-f}{t} \\ & = & (g \mathfrak{x} g^{-1}) \cdot f. \end{eqnarray*}
We see $\Ug$ is stable under the conjugation action of $G$.
By Frommer's theorem \cite{Fro03}, \cite[Thm (Frommer), Proof of Cor 1.4.1]{Koh07}, \begin{eqnarray}\label{Ur conver rate} U_{r}(\mathfrak{g})=\left\{\sum_{\alpha} d_{\alpha} \mathfrak{X}^{\alpha} | d_{\alpha} \in \QQ_p, \lim_{|\alpha| \rightarrow \infty} | d_{\alpha} | \left\|\mathfrak{X}^{\alpha}\right\|_{r}=0\right\} \end{eqnarray}
for $\mathfrak{X}^{\alpha} = \log(1+b_1)^{\alpha_1}\cdots \log(1+b_d)^{\alpha_d}$ compared to (\ref{balpha}),
and $D_r(G, \QQ_p)$ is a free (left or right) $U_r(\frak{g})$-module of basis given by representatives of $G / G^{p^n}$.
It suffices to prove for any $g \in G$, its $p^n$-th power belongs to $U_r(\frak{g})$, and we may further assume $g = b_i$ for certain $1 \leq i \leq d$.
As a formal power series, we have 
\[ g^{p^n} = \exp(p^n \frak{X}_i ) = \sum_{k=0}^\infty \frac{p^{nk}\frak{X}_i^k}{k!}, \] for $\frak{X}_i := \log(1+b_i)$.
Here by Taylor series of $\log(1+x)$ we have \[ \| \frak{X}_i \|_r = \max_{i \geq 1} \frac{r^i}{|i|} = \max_{k \geq 0} \frac{r^{p^k}}{|p^k|} = \max_{k \geq 0} p^{-\frac{p^k}{p^n}} \cdot p^k = \max_{k \geq 0} p^{(k-\frac{p^k}{p^n})} = p^{n-1}, \] and since $\| ~ \|_r$ is multiplicative by \cite[Thm 4.5]{ST03}, $\| \frak{X}_i^k \|_r = p^{(n-1)k}$, we have 
\[ \lim_{k \to \infty} |\frac{p^{nk}}{k!}| \cdot p^{(n-1)k} \leq p^{\frac{k}{p-1}} \cdot p^{-k} = 0. \]
As a consequence $g^{p^n} \in U_r(\frak{g})$ by characterization (\ref{Ur conver rate}).

The second claim follows from the first claim since $\Zg$ is in the center of $D_r(G, \QQ_p)$ by \cite[Prop 3.7]{ST02J} and $\Zg \subset U(\frak{g}) \subset U_r(\frak{g})$.
\end{proof}

\begin{rem}
\cite[Rem 10.6]{AW13} points out $D_r(G,\QQ_p)$ should be a crossed product of the microlocalisation of $\QQ_p[[G^{p^n}]]$ by $G / G^{p^n}$ for $r = \sqrt[p^n]{1 / p}$.
It is quite likely that $U_r(\frak{g})$ coincides with such a microlocalisation.
\end{rem}

For each $n \geq 0$, $G^{p^n}$ is isomorphic to its (unnormalized) Lie algebra \[ L_{G^{p^n}} \simeq p^{n+1}\ZZ_p^d \simeq \ZZ_p^d \] as $p$-adic manifolds. 
We define \[ C^{n,\an}(G, \QQ_p) := \left\{ f \in C(G, \QQ_p) \mid  f ~ \text{is analytic on each} ~ G^{p^n} ~ \text{coset} \right\}, \] 
\begin{eqnarray}\label{la ind an} \text{with} ~ C^\la(G, \QQ_p) = \varinjlim_{n \geq 0} C^{n,\an}(G, \QQ_p). \end{eqnarray}
In particular, $C^\an(G, \QQ_p) := C^{0,\an}(G, \QQ_p)$ is the space of analytic functions on $G$.

For any $n \geq 0$, let $r_n := \sqrt[p^n]{1 / p}$, the transition of spaces of analytic functions of decreasing radius is compact
\[ C^{n,\an}(G, \QQ_p) \hookrightarrow C^{n+1,\an}(G, \QQ_p). \]
For any $n \geq 1$, if $z_1^{n-1},\cdots,z_d^{n-1}$ are coordinates of $G^{p^{n-1}}$, 
then $z_1^n = pz_1^{n-1},\cdots, z_d^n = pz_d^{n-1}$ are coordinates of $G^{p^n}$.
For any $g \in G^{p^{n+1}}$, we may pullback \[ g^\ast (z_i^{n-1}) = \sum_{\alpha \in \ZZ_{\geq 0}^d} c_\alpha^i (z^{n-1})^\alpha \in C^\an(G^{p^{n-1}}, \QQ_p), ~ c_\alpha^i \in \ZZ_p, \]
and the constant term $c^i_{\underline{0}}$ is divided by $p^2$.
Consider the commutative diagram
\[ \xymatrix{
C^{n-1,\an}(G, \QQ_p) \ar[r] \ar[d]^{g^\ast} & C^{n,\an}(G, \QQ_p) \ar[d]^{g^\ast}\\
C^{n-1,\an}(G, \QQ_p) \ar[r] & C^{n,\an}(G, \QQ_p),\\
}\]
we have 
\[ g^\ast (z_i^n) = \sum_{\alpha \in \ZZ_{\geq 0}^d} p^{|\alpha|-1} c_\alpha^i (z^n)^\alpha \in C^\an(G^{p^n}, \QQ_p), ~ c_\alpha^i \in \ZZ_p. \]
Since $g \in G^{p^{n+1}}$ induces the identical map on $G^{p^n}/G^{p^{n+1}}$, we have $c_j^i \equiv \delta_{ij}$ mod $p$ for $z_j^n$'s coefficient $c_j^i$ in $g^\ast (z_i^n)$.
We see that the operator $g-1$ has norm at most $1/p$ on the Banach space $C^{n,\an}(G, \QQ_p)$ for any $n \geq 1$, $g \in G^{p^{n+1}}$,
and so is the operator $(g-1)^{p^{n+1}}$ for any $g \in G$.
Any power series of $D_r(G, \QQ_p)$ converges as an endomorphism of $C^{n,\an}(G, \QQ_p)$
\[ D_{r_{n+1}}(G, \QQ_p) \to \End_{\QQ_p}(C^{n,\an}(G, \QQ_p)), \]
by the description (\ref{Dr series}).
Composed with the evaluation map at identity \begin{eqnarray*} C^{n,\an}(G, \QQ_p) & \xrightarrow{\text{ev}} & \QQ_p \\
f & \mapsto & f(\text{id}), \end{eqnarray*}
we have a natural map
\begin{eqnarray}\label{dual n an} D_{r_{n+1}}(G, \QQ_p) \to (C^{n,\an}(G, \QQ_p))'_b. \end{eqnarray}

\begin{prop}\label{D inv lim iso}
There is an algebraic isomorphism
\[ D(G, \QQ_p) \otimes_{\Zg,\lambda} \QQ_p \xrightarrow{\sim} \varprojlim_{n \geq 1} D_{r_n}^\lambda(G, \QQ_p) . \]
\end{prop}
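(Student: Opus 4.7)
The plan is to view both sides as coadmissible left $D(G,\QQ_p)$-modules and invoke the basic inverse-limit description of coadmissible modules over a Fréchet--Stein algebra.

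First I would unpack the tensor product as a quotient by a finitely generated ideal. Since $\Zg$ is Noetherian, the kernel of $\lambda : \Zg \to \QQ_p$ is generated by finitely many elements $z_1,\dots,z_m \in \Zg$. Recalling from the proof of Prop \ref{crossed prod Dr} that $\Zg$ lies in the center of $D_r(G,\QQ_p)$ (by \cite[Prop 3.7]{ST02J}), and hence in the center of $D(G,\QQ_p)$, the two-sided ideal generated by $\ker\lambda$ coincides with the left ideal $I := \sum_{i=1}^m D(G,\QQ_p)\cdot z_i$. Therefore
\[ D(G,\QQ_p)\otimes_{\Zg,\lambda} \QQ_p \;\cong\; D(G,\QQ_p)/I, \]
and analogously $D_{r_n}^\lambda(G,\QQ_p) \cong D_{r_n}(G,\QQ_p)/I_n$ with $I_n := \sum_{i=1}^m D_{r_n}(G,\QQ_p)\cdot z_i$.

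Next I would use the Fréchet--Stein structure $D(G,\QQ_p) = \varprojlim_n D_{r_n}(G,\QQ_p)$ from \cite{ST03}. The module $D(G,\QQ_p)/I$ is finitely presented by the exact sequence $D(G,\QQ_p)^m \to D(G,\QQ_p) \to D(G,\QQ_p)/I \to 0$ with $e_i \mapsto z_i$, so by Schneider--Teitelbaum's theory it is coadmissible. The defining property of a coadmissible module $M$ gives
\[ M \;\xrightarrow{\sim}\; \varprojlim_n \bigl( D_{r_n}(G,\QQ_p)\otimes_{D(G,\QQ_p)} M \bigr). \]
Applied to $M = D(G,\QQ_p)/I$, the $n$-th term computes to $D_{r_n}(G,\QQ_p)/D_{r_n}(G,\QQ_p)\cdot I$; since the $z_i$ are central and $D_{r_n}(G,\QQ_p)\cdot I = I_n$, this is exactly $D_{r_n}^\lambda(G,\QQ_p)$. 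Assembling the pieces yields the desired algebraic isomorphism, and the compatibility of the transition maps $D_{r_{n+1}}^\lambda \to D_{r_n}^\lambda$ is automatic because the distinguished generators $z_i \in U(\frak{g})$ sit inside every $D_{r_n}(G,\QQ_p)$ and are preserved by the transition maps of the Fréchet--Stein tower.

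I do not anticipate a real obstacle: the argument is essentially a bookkeeping application of coadmissibility. The only small points requiring care are (i) verifying that $D(G,\QQ_p)\cdot \ker\lambda = I$ (using centrality of $\Zg$) so that the left-module quotient agrees with the two-sided tensor product, and (ii) checking that $D_{r_n}(G,\QQ_p)\otimes_{D(G,\QQ_p)} (D(G,\QQ_p)/I) = D_{r_n}(G,\QQ_p)/I_n$ on the nose, which is immediate from right exactness of the tensor product and the explicit generators of $I$.
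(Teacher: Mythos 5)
Your proof is correct, but it takes a genuinely different route from the paper's. You work directly on the module side: you observe that $D(G,\QQ_p)/I$ is finitely presented (using centrality of $\Zg$ in $D(G,\QQ_p)$ to identify the tensor product with a quotient by a finitely generated left ideal), hence coadmissible by \cite[Cor 3.4]{ST03}, and then invoke the defining property $M \xrightarrow{\sim} \varprojlim_n D_{r_n}(G,\QQ_p)\otimes_{D(G,\QQ_p)} M$ together with right exactness to identify the $n$-th term with $D_{r_n}^\lambda(G,\QQ_p)$. The paper instead stays on the representation-theoretic side: it uses the already-established identification (Thm \ref{dis inf sp}) of $D(G,\QQ_p)\otimes_{\Zg,\lambda}\QQ_p$ with the strong dual $(C^\la(G,\QQ_p)[\lambda])'_b$, then writes $C^\la$ as the compact inductive limit $\varinjlim_n C^{n,\an}$ from (\ref{la ind an}), dualizes to an inverse limit, and constructs the inverse map term by term via the evaluation pairing (\ref{dual n an}). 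Both arguments ultimately rest on the Schneider--Teitelbaum Fréchet--Stein machinery, but yours is more economical and avoids having to build the inverse by hand; the paper's version has the side benefit of keeping the locally analytic function-space picture explicit, which feeds directly into the density argument of Thm \ref{dis inf sp} used elsewhere in \S\ref{specialization}. One small remark: you should cite the precise input that finitely presented modules over a Fréchet--Stein algebra are coadmissible (closure of the coadmissible category under cokernels, \cite[Cor 3.4]{ST03}), since coadmissibility is defined by the inverse-limit condition rather than by finite presentation.
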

\begin{proof}
It suffices to construct the inverse of the natural map
\[ D(G, \QQ_p) \otimes_{\Zg,\lambda} \QQ_p \to \varprojlim_{n \geq 1} (D_{r_n}(G, \QQ_p) \otimes_{\Zg,\lambda} \QQ_p). \]
By Thm \ref{dis inf sp}, \[ D(G, \QQ_p) \otimes_{\Zg,\lambda} \QQ_p \to (C^\la(G, \QQ_p)[\lambda] )'_b. \]
From (\ref{la ind an}), 
\[ (C^\la(G, \QQ_p)[\lambda])'_b \simeq \varprojlim_{n \geq 0} (C^{n,\an}(G, \QQ_p)[\lambda])'_b \] as $\QQ_p$-Fr\'echet spaces.
The inverse \[ \varprojlim_{n \geq 0} D_{r_{n+1}}(G, \QQ_p) \otimes_{\Zg,\lambda} \QQ_p \to \varprojlim_{n \geq 0} (C^{n,\an}(G, \QQ_p)[\lambda])'_b \] is given by (\ref{dual n an}).
\end{proof}

Let $m \geq 1$ be a positive integer.
For each $1 \leq i \leq m$, let \[ G_i := (I_2 + p \mathrm{M}_2(\ZZ_p)) \cap \SL_2(\ZZ_p). \]
For the rest of this section, let $G = \prod\limits_{i=1}^m G_i \subset \widetilde{G} := \prod\limits_{i=1}^m \widetilde{G}_i$ be $r$ copies of the first congruence subgroup of $\SL_2(\ZZ_p)$. 

\begin{lem}\label{sl2 lie}
The associated Lie algebra for $G_1=\operatorname{ker} (\SL_2 (\ZZ_p) \rightarrow \mathrm{SL}_{2} (\mathbb{F}_{p} ))$ \[ \frak{g}_0 \simeq \frak{s}\frak{l}_{2,\ZZ_p} = \ZZ_p \cdot h \oplus \ZZ_p \cdot e \oplus \ZZ_p \cdot f,\] has the standard commutator brackets \[ [h, e]=2 e, \quad[h, f] = -2 f, \quad[e, f]=h .\]
\end{lem}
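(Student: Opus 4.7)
My plan is to identify $L_G$ explicitly via the $p$-adic matrix logarithm and then apply the definition $\mathfrak{g}_0 = \tfrac{1}{p} L_G \otimes_{\ZZ_p} \ZZ_p$. Since $p \geq 3$ and every element of $G$ has the form $g = I_2 + pX$ with $X \in M_2(\ZZ_p)$, the series
\[ \log(g) = \sum_{n \geq 1} \frac{(-1)^{n-1}}{n}(g - I_2)^n \]
converges $p$-adically to an element of $pM_2(\ZZ_p)$; the constraint $\det g = 1$ is equivalent to $\tr \log g = 0$, so in fact $\log g \in p\mathfrak{sl}_{2}(\ZZ_p)$. Conversely $\exp$ converges on $p\mathfrak{sl}_2(\ZZ_p)$ and lands in $G$, so $\log$ and $\exp$ are mutually inverse bijections of sets between $G$ and $p\mathfrak{sl}_2(\ZZ_p)$.

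Next I would verify that Lazard's operations on $G$ correspond under $\log$ to matrix addition and the matrix commutator on $p\mathfrak{sl}_2(\ZZ_p)$. For $x = \exp(X)$ and $y = \exp(Y)$ with $X,Y \in p\mathfrak{sl}_2(\ZZ_p)$, one has $x^{p^i} = \exp(p^i X)$, and the $p$-adic Baker--Campbell--Hausdorff formula gives
\[ \log(x^{p^i}y^{p^i}) = p^i(X + Y) + \tfrac{1}{2}p^{2i}[X,Y]_{\mathrm{mat}} + O(p^{3i}), \]
so $(x^{p^i}y^{p^i})^{p^{-i}} = \exp\bigl(X + Y + O(p^i)\bigr) \longrightarrow \exp(X+Y)$, showing that the Lazard sum transports to matrix addition. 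A parallel BCH expansion of the iterated commutator $x^{-p^i} y^{-p^i} x^{p^i} y^{p^i}$ shows that the Lazard bracket transports to the matrix commutator $[X,Y]_{\mathrm{mat}} = XY - YX$. Hence $L_G \simeq p\mathfrak{sl}_2(\ZZ_p)$ as $\ZZ_p$-Lie algebras.

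Rescaling gives $\mathfrak{g}_0 = \tfrac{1}{p} L_G \simeq \mathfrak{sl}_2(\ZZ_p)$, and choosing the standard matrix basis
\[ h = \begin{pmatrix} 1 & 0 \\ 0 & -1 \end{pmatrix}, \qquad e = \begin{pmatrix} 0 & 1 \\ 0 & 0 \end{pmatrix}, \qquad f = \begin{pmatrix} 0 & 0 \\ 1 & 0 \end{pmatrix} \]
yields the asserted relations $[h,e] = 2e$, $[h,f] = -2f$, $[e,f] = h$ by direct matrix computation. The only mildly delicate step is the BCH verification that transports Lazard's group-theoretic limit operations to the concrete matrix Lie operations; this is a standard $p$-adic convergence argument (cf.\ \cite[Ch.\ 4 and Ch.\ 7]{DDSMS99}) that crucially uses $p \geq 3$ to ensure $\exp$ and $\log$ converge with integral values on $pM_2(\ZZ_p)$.
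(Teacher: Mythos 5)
Your proposal is correct and takes essentially the same route as the paper: identify $G$ with $p\mathfrak{sl}_2(\ZZ_p)$ via $\exp$/$\log$, transport Lazard's limit operations to matrix addition and commutator using the Baker--Campbell--Hausdorff estimates, and then read off the bracket relations from the standard matrix basis after rescaling by $\tfrac{1}{p}$. The paper delegates the BCH verification directly to \cite[Lem 7.12]{DDSMS99} where you sketch the convergence computation yourself, but the content is the same.
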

\begin{proof}
We set \[ h = \begin{pmatrix} 1 & 0 \\ 0 & -1 \end{pmatrix}, ~ e = \begin{pmatrix} 0 & 1 \\ 0 & 0 \end{pmatrix}, ~ f = \begin{pmatrix} 0 & 0 \\ 1 & 0 \end{pmatrix} \in \mathrm{M}_2(\ZZ_p). \]
And let $\exp(ph), \exp(pe), \exp(pf)$ be a set of minimal generators of $G_1$.
The $p$-adic manifolds $G_1$ and $p\frak{s}\frak{l}_2(\ZZ_p)$ are identified via the exponential and logarithm maps, the commutator brackets on $p\frak{s}\frak{l}_2(\ZZ_p)$ transfer to $G_1$ for example by the computations in \cite[Lem 7.12]{DDSMS99}.
\end{proof}

For example if $m=1$, the Iwasawa algebra $\ZZ_p[[G]]$ is identified with a non-commutative formal power series ring $\ZZ_p[[F,H,E]]$ in three variables for \[ F:= \exp(pf) - 1, ~ H := \exp(ph) - 1, ~ E := \exp(pe) - 1 \] as in the lemma.
Actually we may explicitly describe the microlocalisation map (\ref{micro}) for our case when we identify $p\frak{s}\frak{l}_2(\ZZ_p)$ with $L_G = p \frak{g}_0$ (proof of \cite[Thm 10.4]{AW13}):
\[ F \mapsto \exp(pf) - 1, ~ H \mapsto \exp(ph) - 1, ~ E \mapsto \exp(pe) - 1. \]
Under such an identification, the Lie algebra action (\ref{lie la}) is equivalent to 
\begin{eqnarray}\label{lie alg log} \mathfrak{x} f:= \log(1+ (\mathfrak{x}-1) ) \cdot f \end{eqnarray}
for $\mathfrak{x} \in \frak{g}_0 \simeq G$ and $f \in C^\la(G, \QQ_p)$.

\begin{thm}\label{Dr domain}
Let $p$ be an odd prime.
If $1/p < r < 1$, $D_r^\lambda(G, \QQ_p)$ is an integral domain.
\end{thm}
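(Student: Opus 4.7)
The strategy is to equip $D_r(G, \QQ_p)$ with the filtration coming from the multiplicative norm $\|\cdot\|_r$ and to deduce the domain property of the quotient $D_r^\lambda(G, \QQ_p) = D_r(G, \QQ_p)/\sum_i (\Delta_i-\lambda_i) D_r(G, \QQ_p)$ from an analysis of the associated graded ring, in the spirit of Lemma \ref{grade ring quotient generator} and Lemma \ref{Ug quot cent}.

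First, I would verify that $\gr D_r(G, \QQ_p)$ with respect to this filtration is a commutative polynomial ring over a graded field. Multiplicativity of $\|\cdot\|_r$ makes the graded ring an integral domain; and since $G$ is a product of uniform pro-$p$ groups, the commutator relations $[g_i, g_j] \in G^p$ force the Lie brackets of the topological generators $b_i = g_i - 1$ to have strictly smaller $\|\cdot\|_r$-norm than the products $b_i b_j$, so Lie brackets vanish in the graded. Hence $\gr D_r(G, \QQ_p)$ is a commutative polynomial ring in the images $\bar b_i$ of the topological generators.

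Next, I would compute the leading symbols of the central elements $\Delta_i - \lambda_i$. Writing the Lie algebra generators of the $i$-th factor in the form $h_i, e_i, f_i = \tfrac{1}{p}\log(1+\text{generator})$ and expanding $\Delta_i = \tfrac{1}{2} h_i^2 + e_i f_i + f_i e_i$ as a power series in the $b$'s, one tracks the dominant term with respect to $\|\cdot\|_r$. Up to a unit scalar, the leading symbol is the quadratic form $\tfrac{1}{2}\bar h_i^2 + 2 \bar e_i \bar f_i$ in suitable graded variables. Since $\lambda_i \in \ZZ_p$ for induced $\lambda$, one has $\|\lambda_i\| \leq 1$ while $\|\Delta_i\|_r > 1$ in the range $r > 1/p$, so $\lambda_i$ does not contribute to the leading symbol of $\Delta_i - \lambda_i$. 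Across different factors the leading symbols use disjoint graded variables, and each $\tfrac{1}{2} h^2 + 2ef$ is irreducible, so as in the proof of Lemma \ref{Ug quot cent} the collection $\{\gr(\Delta_i - \lambda_i)\}_{i=1}^m$ is a regular sequence in $\gr D_r(G, \QQ_p)$ with integral-domain quotient. Iterated application of Lemma \ref{grade ring quotient generator} identifies $\gr D_r^\lambda(G, \QQ_p)$ with this quotient and lifts the domain property back to $D_r^\lambda(G, \QQ_p)$.

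The main obstacle is the computation of the leading symbols in the second step. Because $h_i, e_i, f_i$ are logarithms of group generators, each is an infinite power series in the $b$'s, and the dominant monomial of $\log(1+b) = \sum (-1)^{k+1} b^k / k$ depends on $r$ through the interplay of the $p$-adic valuations of the coefficients $1/k$ and the powers $r^k$; for $r$ close to $1$ the dominant term of $\log(1+b)$ is not $b$ itself but some $b^{p^j}/p^j$, and the form of the leading symbol must be re-examined in each range. A cleaner route is to use Proposition \ref{crossed prod Dr} to reduce to $r = p^{-1/p^n}$ and perform the graded analysis at the level of $U_r(\frak{g})$ using Frommer's PBW description, then transfer to $D_r(G, \QQ_p)$ via the free $U_r(\frak{g})$-module structure indexed by $G/G^{p^n}$; the general case $r \in (1/p, 1)$ can then be handled either by a direct graded argument at general $r$ or by using that $\{p^{-1/p^n}\}_{n \geq 1}$ is cofinal in $(1/p, 1)$ near $1$ together with the inclusions $D_{r'}(G,\QQ_p) \hookrightarrow D_r(G,\QQ_p)$ for $r' > r$.
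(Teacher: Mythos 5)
You correctly identify both the general strategy (pass to the associated graded with respect to the multiplicative norm filtration, via Lemma~\ref{grade ring quotient generator}) and the central obstacle: for $r$ near $1$ the dominant monomial of $\log(1+b_i)$ in the $b$-expansion is some $b_i^{p^j}/p^j$ rather than $b_i$, so the leading symbol of the Casimir cannot be read off directly from $\gr_r D_r(G,\QQ_p)$. But your proposed workaround does not close the gap. Route (a) --- reduce to $r = p^{-1/p^n}$ via Proposition~\ref{crossed prod Dr}, prove that $U_r^\lambda(\frak{g})$ is a domain from Frommer's PBW basis, then ``transfer'' to $D_r^\lambda(G,\QQ_p)$ through the free $U_r(\frak{g})$-module structure --- breaks at the transfer step: a crossed product of an integral domain by a finite $p$-group need not be a domain (the subalgebra $\QQ_p[G/G^{p^n}]$ already has nontrivial idempotents, and its reduction mod $p$ has nilpotents). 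The crossed product structure is used in the paper only to transfer \emph{codimension} estimates via \cite[Cor 5.4]{AB07} (see the proof of Theorem~\ref{mic inf sp}), never the domain property. Route (b) is not spelled out enough to evaluate; note also that the inclusion between the Banach completions goes $D_r \hookrightarrow D_{r'}$ for $r' \leq r$, the opposite of what you write, so ``cofinal near $1$'' is pointing the wrong way.

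The paper's resolution is simpler and more direct. Given any $r \in (1/p,1)$, pick an auxiliary $r' \leq r$ with $1/p < r' < \sqrt{1/p}$, and filter $D_r(G,\QQ_p)$ not by $\|\cdot\|_r$ but by the \emph{foreign} norm $\|\cdot\|_{r'}$ coming from the dense embedding $D_r(G,\QQ_p) \hookrightarrow D_{r'}(G,\QQ_p)$. Density gives $\gr_{r'} D_r(G,\QQ_p) \simeq \gr_{r'} D_{r'}(G,\QQ_p) \simeq \FF_p[\epsilon,\epsilon^{-1}][F,H,E]$. In the window $1/p < r' < \sqrt{1/p}$ the dominant term of $\log(1+b_i)$ is $b_i$ itself, and with the correct normalization (the integral generator of $\ker\lambda$ is $p^2\Delta + \lambda_0$, reflecting $\frak{g}_K = \tfrac{1}{p}L_G \otimes K$) the leading symbol is either the unit $\lambda_0$ (if $v_p(\lambda_0) \leq 1$, so the quotient is zero) or $\tfrac12 H^2 + 2EF$ (if $v_p(\lambda_0) \geq 2$, the induced case), and the quotient graded ring is a domain. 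This one choice of $r'$ eliminates the case analysis over ranges of $r$ that worries you, proves the statement for $D_r^\lambda(G,\QQ_p)$ directly without detouring through $U_r^\lambda(\frak{g})$, and is the idea you are missing.
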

\begin{proof}
For saving notation, we prove for the case $m = 1$.
Let $F,H,E$ be the formal variables with \[ pf = \log(1+F), ~ ph = \log(1+H), ~ pe = \log(1+E) \] defined after Lem \ref{sl2 lie}.

For any $1/p < r < 1$, we can find $r' \leq r$ such that $1/p < r' < \sqrt{1 / p}$.
The sequence $\{ \frac{(r')^k}{| k |} | k \in \ZZ_{\geq 1} \}$ has the property that
\[ \max_{k \geq 1} \frac{(r')^k}{| k |} = \max_{i \geq 0} \frac{(r')^{p^i}}{| p^i |} = r'. \]
Note that we can endow the topology of $D_{r'}(G, \QQ_p)$ on $D_r(G, \QQ_p)$ as \begin{eqnarray}\label{r rprime} D_r(G, \QQ_p) \hookrightarrow D_{r'}(G, \QQ_p) \end{eqnarray} is naturally a dense subalgebra by characterization (\ref{Dr series}).

On $R = D_r(G, \QQ_p)$, or $D_{r'}(G, \QQ_p)$, we have the filtration 
\[ F_{r'}^{s} R:=\left\{ a \in R:\|a\|_{r'} \leq p^{-s} \right\}. \]
The associated graded ring is denoted by $\gr_{r'}(R)$.
By the density of (\ref{r rprime}), we have the isomorphism \[ \gr_{r'} D_r(G, \QQ_p) \simeq \gr_{r'} D_{r'}(G, \QQ_p) \simeq \mathbb{F}_{p}[\epsilon,\epsilon^{-1}][F,H,E] \] by \cite[Thm 4.5]{ST03}.

Let $\Delta := \frac{1}{2} h^2-h+2e f$ be the Casimir operator.
The kernel of $\lambda$ is generated by $p^2 \Delta+\lambda_0$ for $\lambda_0 \in \QQ_p$.
The associated graded ring for $D_r^\lambda(G, \QQ_p)$ is 
\[ \gr_{r'} D_r^\lambda(G, \QQ_p) \simeq \gr_{r'} D_r(G, \QQ_p) / \gr_{r'}(p^2 \Delta+\lambda_0) \] by Lem \ref{grade ring quotient generator}, where
the principal symbol of generator $\gr_{r'}(p^2 \Delta+\lambda_0)$ (\S \ref{rank}) in $\gr_{r'}(D_r(G, \QQ_p))$ equals to 
\[ \gr_{r'}(\frac{1}{2}\log(1+H)^2 - p\log(1+H) + 2\log(1+E)\log(1+F) + \lambda_0) = \gr_{r'}(\frac{1}{2}H^2+2EF+\lambda_0) \] by the assumption of $r'$.
If the valuation of $\lambda_0$ is at most $1$, \[ \gr_{r'}(p^2 \Delta+\lambda_0) = \gr_{r'}(\lambda_0) \] is a unit in $\gr_{r'} D_r(G, \QQ_p)$, making the quotient equal to zero.
Otherwise \[ \gr_{r'}(p^2 \Delta+\lambda_0) = \frac{1}{2}H^2+2EF, \]
\[ \gr_{r'} D_r^\lambda(G, \QQ_p) \simeq \mathbb{F}_{p}[\epsilon,\epsilon^{-1}][F,H,E] / (\frac{1}{2}H^2+2EF) \] is an integral domain by the proof of the third part of Lem \ref{Ug quot cent}.
\end{proof}

\begin{rem}
\begin{itemize}
\item Although $\gr_{1/p} D_r(G, \QQ_p)$ is noncommutative, it can probably be shown that $\gr_{1/p} D_r^\lambda(G, \QQ_p)$ is an integral domain as well.
\item The infinitesimal character $\lambda$ is induced if and only if the valuation $\lambda_0$ is at least $2$ since for any continuous character $\chi : T \simeq 1+p\ZZ_p \to 1+p\ZZ_p$, $d\chi(p^2 h^2) \in p^2\ZZ_p$.
\end{itemize}
\end{rem}

\begin{proof}[Proof of Theorem \ref{mic inf sp}]
For any non-zero $\delta \in \QQ_p[[G]]$, there exists $n_{\delta} \geq 1$ such that the image of $\delta$ in $D_r^\lambda(G, \QQ_p)$ is non-zero for $r = \sqrt[p^{n_\delta}]{1 / p}$ by Thm \ref{dis inf sp} and Prop \ref{D inv lim iso}.
The right $D_r^\lambda(G, \QQ_p)$-module $D_r^\lambda(G, \QQ_p) / \delta \cdot D_r^\lambda(G, \QQ_p)$ is torsion.
Since $D_r^\lambda(G, \QQ_p)$ is an integral domain by Thm \ref{Dr domain}, $D_r^\lambda(G, \QQ_p) / \delta \cdot D_r^\lambda(G, \QQ_p)$ has positive codimension by \cite[Prop 2.5]{AW13}.
By applying Prop \ref{crossed prod Dr} and \cite[Cor 5.4]{AB07}, $D_r^\lambda(G, \QQ_p) / \delta \cdot D_r^\lambda(G, \QQ_p)$ also has positive codimension over $U_r^\lambda(\frak{g})$.
By Lem \ref{Ug quot cent}, $U_r^\lambda(\frak{g})$ is an integral domain, there exists an element $\delta' \in D_r^\lambda(G, \QQ_p)$ such that $\delta \delta' \in U_r^\lambda(\frak{g})$ is non-zero.
By the description (\ref{Ur conver rate}) of $U_r(\frak{g})$, we leave the reader to prove
\[ U_r^\lambda(\frak{g}) = \left\{\sum_{\alpha} d_{\alpha} \prod_{i=1}^m e_i^{\alpha^i_a} f_i^{\alpha^i_b} h_i^{\alpha^i_c} \mid \alpha^i_c \in \{ 0, 1\}, ~ d_{\alpha} \in K, \lim_{|\alpha| \rightarrow \infty} | d_{\alpha} | \left\|\prod_{i=1}^m e_i^{\alpha^i_a} f_i^{\alpha^i_b} h_i^{\alpha^i_c}\right\|_{r}=0\right\} \]
similar to the first part of Lem \ref{Ug quot cent} giving topological basis of $U_r^\lambda(\frak{g})$, and similarly for $\widehat{U(\frak{g})} \otimes_{\Zg,\lambda} \QQ_p$.
Given the basis for both $U_r^\lambda(\frak{g})$ and $\widehat{U(\frak{g})} \otimes_{\Zg,\lambda} \QQ_p$, it is direct to see that the natural inclusion \[ U_r^\lambda(\frak{g}) \hookrightarrow \widehat{U(\frak{g})} \otimes_{\Zg,\lambda} \QQ_p \] is injective, hence $\delta \delta'$ is non-zero in $\widehat{U(\frak{g})} \otimes_{\Zg,\lambda} \QQ_p$.
The completed enveloping algebra is identified with $D_{1/p}(G, \QQ_p)$ (\cite[Rem 10.5, (c)]{AW13}, \cite[Lem 5.2]{AW14}).
The image of $\delta$ via the microlocalisation (\ref{micro}) is non-zero as well.
\end{proof}

\section{Local applications to finitely generated Iwasawa modules}\label{loc app}
We continue to use notation from the previous sections.

Let $p$ be an odd prime, $r \geq 1$ be a positive integer.
For each $1 \leq i \leq r$, let \[ G_i := (I_2 + p \mathrm{M}_2(\ZZ_p)) \cap \SL_2(\ZZ_p). \]
Let $G = \prod\limits_{i=1}^r G_i$, $\bk = (k_1,\cdots,k_r) \in \NN^r$, and $W_{\bk}$ be the algebraic representation $\boxtimes_{i=1}^{r} \Sym^{k_i}$ of $G$.
As $G$ is compact, by choosing an integral lattice, $W_{\bk}$ is equipped with a structure of finite dimensional Banach representation of $G$.
We leave an exercise for the reader to show $W_\bk$ is irreducible self-dual.
By main result of \cite{ST02I}, $W_{\bk}^\ast \simeq W_{\bk}$ admits an action of $K[[G]]$, and we do not distinguish $W_{\bk}^\ast$ and $W_{\bk}$.

We exhibit the $W_{\bk}$-quotient of $K[[G]]$ explicitly using the theory of Schneider-Teitelbaum as follows.
To ease notation, we do this for $r=1$, $\bk = k \in \NN$.
As a module over itself, the dual (constructed in \cite{ST02I}) of $K[[G]]$ is the Banach representation of continuous function $C(G, K)$ on $G$.
Our choice of $G$ can be viewed as an open subgroup of $\ZZ_p$-points of the group scheme $\SL_2$ over $\ZZ_p$. 
We define the space of algebraic vectors $C^{\mathrm{alg}}(G, K)$ of $C(G, K)$ to be the following $K$-linear vector space of polynomial functions on $G$ \[ \{ K[x,y,z,w]/(xw-yz-1) \}, \] as $\begin{pmatrix}
x & y \\
z & w
\end{pmatrix} \in G \subset \SL_2(\ZZ_p)$.
There is a natural two sided action of $G$ on $C^{\mathrm{alg}}(G, K)$ with a $G$-stable filtration $C^{\mathrm{alg}}_{\leq n}(G, K)$ by degrees of polynomials in variables $x,y,z,w$ such that \[ \dim_K (C^{\mathrm{alg}}_{\leq n}(G, K)/C^{\mathrm{alg}}_{\leq n-1}(G, K)) = \binom{n+3}{3} - \binom{n+1}{3} = (n+1)^2. \]
Here the dimension of the space of homogenous polynomials of degree $n$ (resp. $n-2$) in $4$-variables is $\binom{n+3}{3}$ (resp. $\binom{n+1}{3}$).

The space of degree $k$ homogenous polynomials in variables $x, z$ is isomorphic to $W_k$ as a left representation of $G$.
The vector $x^k$ is left invariant under the lower triangular nilpotent subgroup and right invariant under the lower upper triangular nilpotent subgroup with the highest weight in $W_k$, it generates an irreducible $G \times G$ subrepresentation $W_k \otimes W_k^\ast$ inside $C^{\mathrm{alg}}_{\leq k}(G, K)/C^{\mathrm{alg}}_{\leq k-1}(G, K)$.
By counting the dimensions we have \[ C^{\mathrm{alg}}_{\leq k}(G, K)/C^{\mathrm{alg}}_{\leq k-1}(G, K) \simeq W_k \otimes W_k^\ast. \]
By complete reducibility of finite dimensional representations, $W_k \otimes W_k^\ast$ is identified with $W_k$-algebraic vectors in $C(G, K)$ as a left or right representation of $G$, it is automatically closed in $C(G, K)$ by finite dimensionality.
Again by the main result of \cite{ST02I}, $W_k \otimes W_k^\ast$ is also identified with the maximal $W_k$-quotient of $K[[G]]$. 

Note that as an algebraic representation of $G$, $W_{\bk}$ receives a Lie algebra action (with respect to the explicit algebraic structure of $G$ above). 
$W_{\bk}$ remains irreducible when restricted to arbitrarily small open subgroups of $G$.
\cite[Thm 11.3]{AW13} says any finite dimensional simple Iwasawa module is a tensor product of a smooth $G$-representation and a Lie module (\S \ref{alg quo}).
Therefore our $W^\ast_\bk \simeq W_\bk$ is a Lie module,
we are entitled to apply results in \S \ref{alg quo}.

Let $\wAnn(\bk) := \Ann_{K[[G]]} (W_{\bk}) = \widetilde{\Ann(W_{\bk})}$ be the annihilator ideal of $K[[G]]$ for $W_{\bk}$, we have an isomorphism of algebras \begin{eqnarray}\label{KG Ann} K[[G]] / \wAnn(\bk) \xrightarrow{\sim} \End_K (W_{\bk}). \end{eqnarray}
Here the surjectivity follows from either directly counting dimensions of both sides given our explicit computation for $\SL_2$, or combining (\ref{KG iso}) and Thm \ref{KG quo}.

\begin{proof}[Proof of Theorem \ref{main loc thm}]
Any $\QQ_p[[G]]$-equivariant map from $\wtM$ to $W_{\bk}$ factors through $\wtM_{W_{\bk}}$, we have $\Hom_{\QQ_p[[G]]}(\wtM, W_{\bk}) = \Hom_{\QQ_p[[G]]}(\wtM_{W_{\bk}}, W_{\bk})$.
By Thm \ref{KG quo}, \[ \Hom_{\QQ_p[[G]]}(\wtM, W_{\bk}) \simeq \Hom_{\wUgK}(\whM, W_{\bk}). \]
There are multiple ways to express the multiplicity of $W_{\bk}$ in $\wtM$, we have
\[ \Hom_{\QQ_p[[G]]} (\wtM, W_{\bk}) \simeq \Hom_{\QQ_p[[G]]} (\wtM \otimes W_{\bk}^\ast, \mathds{1}) \simeq H_0(G, \wtM \otimes W_{\bk}^\ast)^\ast. \]
The dimensions of these $K$-vector spaces all agree with $H^0_{\whM}(\bk)$ in (\ref{Wk mul}).

We choose short exact sequences 
\begin{eqnarray}\label{tilde Q} 0 \to (\QQ_p[[G]])^d \to \wtM \to \widetilde{Q} \to 0, \end{eqnarray}
\begin{eqnarray}\label{tilde N} 0 \to \widetilde{N} \to (\QQ_p[[G]])^l \to \wtM \to 0, \end{eqnarray}
where $\widetilde{Q}$ is torsion.
Let $\whM = \widehat{U(\frak{g})} \otimes_{K[[G]]} \wtM$ be the microlocalisation of $\wtM$, and similarly for $\widetilde{Q}$, $\widetilde{N}$.
The microlocalisations preserve short exact sequences like (\ref{tilde Q}), (\ref{tilde N}) as (\ref{micro}) is flat.
We may filter $\widetilde{Q}$ by cyclic subquotients $\widetilde{Q}_1, \cdots, \widetilde{Q}_q$ with $\QQ_p[[G]] / \QQ_p[[G]] \cdot \delta_i \twoheadrightarrow \widetilde{Q}_i$ with non-zero $\delta_i \in \QQ_p[[G]]$.
By Thm \ref{mic inf sp}, $\delta_i$ are generic (\S \ref{gro enve}).
Thm \ref{upper bound} produces $p_{\widehat{Q}_i}$ such that $H^0_{\widehat{Q}_i}(\bk) \leq p_{\widehat{Q}_i}(\bk)$.
And we may choose $p_{\widehat{Q}} := \sum_{i=1}^q p_{\widehat{Q}_i}$ so $H^0_{\widehat{Q}}(\bk) \leq p_{\widehat{Q}}(\bk)$.
By applying the microlocalisations to the sequence (\ref{tilde Q}) followed by taking $W_\bk$ quotient
\[ (\widehat{U(\frak{g})} / \widehat{\Ann(\bk)})^d \to \whM_\bk \to \widehat{Q}_\bk \to 0, \] 
we have \[ H^0_{\whM}(\bk) \leq d \prod_{i=1}^r (k_i+1) + p_{\widehat{Q}}(\bk), \] therefore proving one side of $i=0$ case for any finitely generated $\QQ_p[[G]]$-module $\wtM$.
Similarly by applying the microlocalisations to the sequence (\ref{tilde N}) followed by taking the $W_\bk$ quotient, we get the other side of inequality for the case $i=0$ using \[ \rank_{\wUg} \widehat{N} + \rank_{\wUg} \whM = l. \]

Because $(\QQ_p[[G]])^l$ is acyclic, the long exact sequence associated to (\ref{tilde N}) in homology gives \[ 0 \to H_1(G,\wtM \otimes W_{\bk}) \to H_0(G, \widetilde{N} \otimes W_{\bk}) \to \] \begin{eqnarray}\label{LSE wtM} H_0(G, (\QQ_p[[G]] / \wAnn(\bk)) \otimes W_{\bk})^l \to H_0(G,\wtM \otimes W_{\bk}) \to 0, \end{eqnarray}
\begin{eqnarray}\label{ind coh} H_i(G, \wtM \otimes W_{\bk}) \simeq H_{i-1}(G, \widetilde{N} \otimes W_{\bk}), ~~ i \geq 2. \end{eqnarray}

The case $i=1$ is obtained by applying case $i=0$ to both $\wtM$ and $\widetilde{N}$ using (\ref{LSE wtM}). And $i \geq 2$ cases follow from an induction using (\ref{ind coh}).
\end{proof}

\section{Global automorphic applications}
In this final section, we refer to notation in the introduction section \S \ref{intro}.
We choose an odd prime $p$ which splits completely in $F$.
Recall that $r := [F:\QQ]$.
Let \[ G := \prod\limits_{v | p} G_v, ~ G_v := (I_2 + p \mathrm{M}_2(\ZZ_p)) \cap \SL_2(\ZZ_p), \] as the ring of integers in $F_v$ is identified with $\ZZ_p$ for all $v | p$.

For $\bk \in \NN^r$, it defines a $\CC$-representation $V_{\bk}$ of $\SL_2(F_{\infty})$, as well as a $\QQ_p$-representation $W_{\bk}$ of $\SL_2(F_p)$. 
These representations give rise to a $\CC$-local system $V_{\bk}$ and its corresponding $\QQ_p$-local system $W_{\bk}$.
It is explained in \cite[\S 5]{Mar12} that \begin{eqnarray}\label{C Qp comp} \dim_{\CC} H_i(Y(K_f), V_{\bk}) = \dim_{\QQ_p} H_i(Y(K_f), W_{\bk}). \end{eqnarray}

We cite some properties of the completed homology (\ref{comp homo})
\[ \wH_{\bullet}(K^p):=\varprojlim_{s} \varprojlim_{K_p \subset G} H_{\bullet} (Y(K_pK^p), \ZZ / p^{s} \ZZ) \otimes_{\ZZ_p} \QQ_p, \]
from \cite{Eme06}, \cite{CE09}: 
\begin{itemize}
\item If $F$ is not totally real, $\wH_{i}(K^p)$ is a finitely generated torsion $\QQ_p[[G]]$-module.
\item There is a spectral sequence
\begin{eqnarray}\label{comp SS}
E_{2}^{i, j} = H_{i} (G, \wH_{j}(K^p) \otimes W_{\bk}) \Longrightarrow H_{i+j} (Y(K_f), W_{\bk}),
\end{eqnarray}
where $K_f = G K^p$.
\end{itemize}

By (\ref{C Qp comp}), the spectral sequence (\ref{comp SS}) implies an upper bound \[ \dim_{\CC} H_{q} (Y(K_f), V_{\bk}) \leq \sum_{i+j=q} \dim_{\QQ_p} H_{i}(G, \wH_{j}(K^p) \otimes W_{\bk}). \]
Thm \ref{main loc thm} produces a multiplicity-free polynomial $p_{K_f}$ of degree at most $r-1$ for the right hand side, giving \[ \dim_{\CC} H_{q} (Y(K_f), V_{\bk}) \leq p_{K_f}(\bk), ~ \bk \in \NN^r. \]
Finally, we apply the Poincar\'e duality and Eichler-Shimura isomorphism (\ref{ES}) to finish our proof of Thm \ref{main uncond}.


\begin{thebibliography}{99}

\bibitem[AB07]{AB07}
Konstantin Ardakov and Kenneth A. Brown, \emph{Primeness, semiprimeness
and localisation in Iwasawa algebras}, Trans. Amer. Math. Soc.
\textbf{359} (2007), 1499–1515.

\bibitem[Ard04]{Ard04}
Konstantin Ardakov, \emph{The centre of completed group algebras of pro-p-groups}, Doc. Math. \textbf{9} (2004), 599-606.

\bibitem[AW13]{AW13}
Konstantin Ardakov and Simon Wadsley, \emph{On irreducible representations of compact $p$-adic analytic groups}, Ann. of Math. \textbf{178} (2013), 453–557.

\bibitem[AW14]{AW14}
\bysame, \emph{Verma modules for Iwasawa algebras are faithful}, M\"unster J. Math. \textbf{7} (2014), 5–26.
 
\bibitem[Bou05]{Bou05}
Nicolas Bourbaki, \emph{Lie groups and Lie algebras (Ch. 7–9)}, Springer-Verlag, New York, 2005.

\bibitem[CE09]{CE09}
Frank Calegari and Matthew Emerton, \emph{Bounds for multiplicities of unitary representations of cohomological type in spaces of cusp forms}, Ann. of Math. \textbf{170} (2009), 1437-1446.

\bibitem[CE11]{CE11}
\bysame, \emph{Completed cohomology – A survey}, in Nonabelian Fundamental Groups and Iwasawa Theory, London Math. Soc. Lecture Notes Series, vol. \textbf{393} (2011), 239-257.

\bibitem[CM09]{CM09}
Frank Calegari and Barry Mazur, \emph{Nearly ordinary Galois deformations over arbitrary number fields}, J. Inst. Math. Jussieu \textbf{8} (2009), 99-177.

\bibitem[DDSMS99]{DDSMS99}
John Dixon, Marcus du Sautoy, Avinoam Mann, and Dan Segal, \emph{Analytic pro-p groups}, volume 61 of Cambridge Studies in Advanced Mathematics. Cambridge University Press, Cambridge, second edition, 1999.

\bibitem[Eme06]{Eme06}
Matthew Emerton, \emph{On the interpolation of systems of eigenvalues attached to automorphic Hecke eigenforms}, Invent. Math. \textbf{164} (2006), 1-84.

\bibitem[FGT10]{FGT10}
Tobias Finis, Fritz Grunewald, and Paulo Tirao, \emph{The cohomology of lattices in $\mathrm{SL}(2, \CC)$}, Experiment. Math. \textbf{19} (2010), 29-63.

\bibitem[Fro03]{Fro03}
Henning Frommer, \emph{The locally analytic principal series of split reductive groups}, M\"unster: SFB-preprint, 265, 2003.

\bibitem[Hu21]{Hu21}
Yongquan Hu, \emph{Multiplicities of cohomological automorphic forms on $\SL_2$ and mod $p$ representations of $\SL_2(\QQ_p)$}, Journal of the EMS. \textbf{23} (2021), 3625–3678.

\bibitem[Koh07]{Koh07}  
Jan Kohlhaase, \emph{Invariant distributions on p-adic analytic groups}, Duke Math. J. \textbf{137} (2007), 19–62.

\bibitem[Laz65]{Laz65}
Michel Lazard. \emph{Groupes analytiques $p$-adiques}, Inst. Hautes \'Etudes Sci. Publ. Math. \textbf{26} (1965), 389–603.

\bibitem[Mar12]{Mar12}
Simon Marshall, \emph{Bounds for the multiplicities of cohomological automorphic forms on $\SL_2$}, Ann. of Math. \textbf{175} (2012), 1629-1651.

\bibitem[MR01]{MR01}
John C. McConnell and James C. Robson, \emph{Noncommutative Noetherian rings}, volume 30 of Graduate Studies in Mathematics, American Mathematical Society, Providence, RI, revised edition, 2001. With the cooperation of L. W. Small.

\bibitem[Sch13]{Sch13}
Tobias Schmidt, \emph{On locally analytic Beilinson-Bernstein localization and the canonical dimension}, Math. Zeitschrift. \textbf{275} (2013), 793-833.

\bibitem[Shi63]{Shi63}
Hideo Shimizu, \emph{On the discontinuous groups operating on the product of the upper half planes}, Ann. of Math. \textbf{77} (1963), 33-71.

\bibitem[ST02a]{ST02I}
Peter Schneider and Jeremy Teitelbaum, \emph{Banach space representations and Iwasawa theory}, Israel J. Math. \textbf{127} (2002), 359–380.

\bibitem[ST02b]{ST02J}
\bysame, \emph{Locally analytic distributions and $p$-adic representation theory, with applications to $GL_2$}, Journal of the AMS. \textbf{15} (2002), 443-468.

\bibitem[ST03]{ST03}
\bysame, \emph{Algebras of $p$-adic distributions and admissible representations}, Invent. Math. \textbf{153} (2003), 145–196.

\bibitem[Sta]{Sta}
The {Stacks Project Authors}, \emph{\itshape {S}tacks {P}roject},
  \url{http://stacks.math.columbia.edu}.
  
\bibitem[Ven02]{Ven02}
Otmar Venjakob, \emph{On the structure theory of the Iwasawa algebra of a $p$-adic Lie group}, Journal of the EMS. \textbf{4} (2002), 271–311.

\end{thebibliography}
\end{document}